\definecolor{blue}{rgb}{0,0,0.9} 
\definecolor{red}{rgb}{0.9,0,0} 
\definecolor{green}{rgb}{0,0.9,0} 
\newcommand{\blue}[1]{\begin{color}{blue}#1\end{color}} 
\newcommand{\red}[1]{\begin{color}{red}#1\end{color}} 
\theoremstyle{plain}
\newtheorem{theo}{Theorem}[section]
\newtheorem{ques}[theo]{Question}
\newtheorem{lem}[theo]{Lemma}
\newtheorem{prop}[theo]{Proposition}
\theoremstyle{definition}
\theoremstyle{remark}
\newtheorem{rem}[theo]{Remark}
\def\Tr{\mathrm{Tr}}
\def\Trl{\mathrm{Tr_l}}
\def\Trr{\mathrm{Tr_r}}
\def\H{\mathcal{H}}
\def\hH{\widehat{\mathcal{H}}}
\def\<{\left\langle}
\def\>{\right\rangle}
\def\E{\mathcal{E}}
\def\A{\mathcal{A}}
\def\Hr{\mathbb{H}}
\def\B{\mathcal{B}}
\def\hA{\widehat{\mathcal{A}}}
\def\hB{\widehat{\mathcal{B}}}
\def\Lm{\mathcal{L}}
\def\F{\mathcal{F}}
\def\hF{\widehat{\mathcal{F}}}
\def\hG{\widehat{\mathcal{G}}}
\def\Q{{\bf Q}}
\def\supp{{\rm supp}}
\def\I{\mathcal{I}}
\def\R{\mathbb{R}}
\def\U{\mathcal{U}}
\def\X{\mathcal{X}}
\def\Y{\mathcal{Y}}
\newcommand{\ri}[1]{\operatorname{ri}(#1)}
\def\G{\mathcal{G}}
\def\N{\mathbb{N}}
\def\dd{{\rm diag}}
\def\P{\mathcal{P}}
\def\Ph{\hat{\mathcal{P}}}
\def\PX{\mathcal{P}_{\mathcal{X}}}
\def\PY{\mathcal{P}_{\mathcal{Y}}}
\def\O{\mathcal{O}}
\def\({\left(}
\def\){\right)}
\def\hX{\widehat{\mathcal{X}}}
\def\hY{\widehat{\mathcal{Y}}}
\def\bC{{\bf C}}
\def\hC{\widehat{C}}
\def\hc{\hat{c}}
\def\hbC{\widehat{{\bf C}}}
\def\hbc{\widehat{{\bf c}}}
\def\hbc{\hat{{\bf c}}}
\def\bQ{{\bf Q}}
\def\bq{{\bf q}}
\def\Pi{\displaystyle\prod\limits}
\def\logm{{\rm logm}}
\def\expm{{\rm expm}}
\let\svthefootnote\thefootnote
\newcommand\blankfootnote[1]{%
	\let\thefootnote\relax\footnotetext{#1}%
	\let\thefootnote\svthefootnote%
}
\begin{document}
	\title{A Bregman ADMM for Bethe variational problem}
	\author{Yuehaw Khoo
	\thanks{Department of Statistics, University of Chicago, 
	({\tt ykhoo@uchicago.edu}). The research of this author is partially funded by NSF DMS-2339439, DOE DE-SC0022232, DARPA The Right Space HR0011-25-9-0031, and a Sloan research fellowship.}, \quad
	Tianyun Tang
\thanks{Department of Statistics, University of Chicago,  ({\tt ttang@u.nus.edu}).
         }, \quad 
	 Kim-Chuan Toh\thanks{Department of Mathematics, and Institute of 
Operations Research and Analytics, National
         University of Singapore, ({\tt mattohkc@nus.edu.sg}).  The research of this author is supported
by the Ministry of Education, Singapore, under its Academic Research Fund Tier 3 grant call (MOE-2019-T3-1-010).}
	 }
	\date{\today} 
	\maketitle

\begin{abstract}
In this work, we propose a novel Bregman ADMM with nonlinear dual update to solve the Bethe variational problem (BVP), a key optimization formulation in graphical models and statistical physics. Our algorithm provides rigorous convergence guarantees, even if the objective function of BVP is non-convex and non-Lipschitz continuous on the boundary. A central result of our analysis is proving that the entries in local minima of BVP are strictly positive, effectively resolving non-smoothness issues caused by zero entries. Beyond theoretical guarantees, the algorithm possesses high level of separability and parallelizability
to achieve highly efficient subproblem computation. Our Bregman ADMM can be easily extended to solve the quantum Bethe variational problem. Numerical experiments are conducted to validate the effectiveness and robustness of the proposed method. Based on this research, we have released an open-source package of the proposed method at \url{https://github.com/TTYmath/BADMM-BVP}.
\end{abstract}


\section{Introduction}\label{Sec:intr}


\subsection{Graphical model and belief propagation}\label{Subsec:beth} 

Graphical model \cite{lauritzen1996graphical} is a powerful framework for representing multivariate distributions, with a graph encoding the dependency structure among variables. It has applications in many fields including scientific computing \cite{perry2018message}, machine learning \cite{bach2004learning}  and statistical physics \cite{pelizzola2005cluster}. A key challenge in graphical models is probabilistic inference, which we describe in the next paragraph.

For a simple undirected connected graph $G = ([n], \E)$, consider a pairwise Markov random field (MRF) with the joint probability distribution function given by
\begin{equation}\label{MPDF}
P(x_1, x_2, \ldots, x_n) = \frac{1}{Z} \prod_{ij \in \E} \Psi_{ij}(x_i, x_j) \prod_{k \in [n]} \Psi_k(x_k),
\end{equation}
where $x_k$ is the state of node $k$ with $r$ possible values, $\Psi_{ij}(x_i, x_j)$ and $\Psi_k(x_k)$ are potential functions, and $Z$ is a normalization constant. The goal of probabilistic inference is to determine the marginal distribution of a single random variable $x_k$. While the marginal distribution can be computed directly by summing over all other variables, this approach is generally NP-hard \cite{dagum1993approximating} due to the exponential growth in the state space as $n$ increases.

\medskip


\noindent{\bf Belief propagation.} In 1988, Pearl introduced belief propagation (BP) \cite{pearl2014probabilistic} as an approximate method to solve the probabilistic inference problem. For each edge $ij$ (In this paper, we assume $i < j$ when referring to $ij \in \E$.) in $\E$, BP defines message variables $m_{i \rightarrow j}(x_j)$ and $m_{j \rightarrow i}(x_i)$, which are updated as follows:
\begin{align}
&m_{i \rightarrow j}(x_j) \leftarrow \frac{1}{z_{i,j}} \sum_{x_i \in [r]} \Psi_{ij}(x_i, x_j) \Psi_i(x_i) \prod_{k \in N(i) \setminus \{j\}} m_{k \rightarrow i}(x_i), \label{BPij}\\
&m_{j \rightarrow i}(x_i) \leftarrow \frac{1}{z_{j,i}} \sum_{x_j \in [r]} \Psi_{ij}(x_i, x_j) \Psi_j(x_j) \prod_{k \in N(j) \setminus \{i\}} m_{k \rightarrow j}(x_j), \label{BPji}
\end{align}
where $z_{i,j}$ and $z_{j,i}$ are normalization constants that prevent the messages from diverging. This iterative process can be interpreted as passing messages along the network until convergence to a fixed point. Using these messages, the marginal distributions are approximated as:
\begin{align}
&b_k(x_k) := \frac{1}{\hat{z}_k} \Psi_k(x_k) \prod_{i \in N(k)} m_{i \rightarrow k}(x_k), \label{bk}\\
&b_{i,j}(x_i, x_j) := \frac{1}{\bar{z}_{i,j}} \Psi_{ij}(x_i, x_j) \prod_{k \in N(i) \setminus \{j\}} m_{k \rightarrow i}(x_i) \prod_{k \in N(j) \setminus \{i\}} m_{k \rightarrow j}(x_j), \label{bij}
\end{align}
where $\hat{z}_k$ and $\bar{z}_{i,j}$ are normalization constants ensuring that $b_k$ and $b_{i,j}$ are valid probability distributions. The approximations $b_k$ and $b_{i,j}$, often referred to as beliefs, provide estimates of the marginal distributions of the joint probability $P$. For tree-structured graphs, BP is guaranteed to converge to the exact marginal distributions in a finite number of iterations. However, for graphs with cycles, convergence is not guaranteed \cite{murphy2013loopy}. Nevertheless, BP often performs remarkably well as a heuristic in practice and has been successfully applied in various domains \cite{abbe2018proof,makur2024robustness,mridula2011combining}.

\subsection{Bethe variational problem}

The practical success of BP has inspired extensive theoretical investigations. Yedidia, Freeman, and Weiss achieved a key breakthrough in their seminal works \cite{yedidia2001bethe,yedidia2003understanding,yedidia2000generalized}, where they demonstrated that the fixed points of BP correspond to the critical points of the following Bethe variational problem (BVP)\cite{wainwright2008graphical}:
\begin{align}\label{BFEM}
\min\Bigg\{&\sum_{ij \in \E} \sum_{x_i, x_j \in [r]} b_{ij}(x_i, x_j) \ln \frac{b_{ij}(x_i, x_j)}{\Psi_{ij}(x_i, x_j)b_i(x_i)b_j(x_j)} 
+ \sum_{k \in V} \sum_{x_k \in [r]} b_k(x_k) \ln \frac{b_k(x_k)}{\Psi_k(x_k)} : \\
&\forall k \in [n], \sum_{x_k \in [r]} b_k(x_k) = 1, \ \forall ij \in \E, \ \sum_{x_j \in [r]} b_{ij}(x_i, x_j) = b_i(x_i), \ \sum_{x_i \in [r]} b_{ij}(x_i, x_j) = b_j(x_j) \notag
\Bigg\},
\end{align}
where $b_k$ and $b_{ij}$ are the beliefs satisfying normalization and marginalization constraints. The objective function in (\ref{BFEM}), known as the Bethe free energy (BFE), was introduced by Hans Bethe in 1935 \cite{bethe1935statistical}. It serves as a powerful approximation tool in statistical physics \cite{mackay2003information}, simplifying the computation of free energy in systems where exact calculations are computationally infeasible. Alternatively, the BFE can be interpreted as the Kullback-Leibler (KL) divergence \cite{kullback1951information} between a tree-like probability distribution 
\begin{equation}\label{treelike}
\frac{\prod_{ij \in \E} b_{ij}(x_i, x_j)}{\prod_{k \in [n]} b_k(x_k)^{d_k - 1}},
\end{equation}
where $d_k := |N(k)|$ is the degree of vertex $k$, and the joint distribution in (\ref{MPDF}). Since general probability distributions may not adhere to the form in (\ref{treelike}), the minimizer of (\ref{BFEM})—and by extension, the output of the BP algorithm—provides only an approximation to the original probabilistic inference problem.

BP can be viewed as a fixed-point method applied to the Karush-Kuhn-Tucker (KKT) conditions of (\ref{BFEM}), with the message variables corresponding to the Lagrange multipliers for the marginal constraints. The connection between BP and (\ref{BFEM}) offers an alternative approach to finding a fixed point of BP by directly solving the optimization problem (\ref{BFEM}) and obtaining its stationary point. 

However, one important limitation of the earlier works by Yedidia et al.~\cite{yedidia2001bethe,yedidia2003understanding,yedidia2000generalized} is that they considered only the variables $b_{ij}$ and $b_k$ with strictly positive entries, thereby ignoring boundary points in the feasible set of (\ref{BFEM}). If the minimizer of (\ref{BFEM}) contains zero entries, it cannot serve as a fixed point for BP. However, boundary points cannot be disregarded when solving (\ref{BFEM}), as the non-convexity makes it challenging to ensure that local minima will not have zero entries. Furthermore, zero entries pose significant challenges for optimization algorithms because the objective function is not Lipschitz-continuous on the boundary. This observation motivates the following question:

\begin{ques}\label{Q1}
Do the local minima of (\ref{BFEM}) have only positive entries?
\end{ques}

In Section~\ref{Sec:land}, we provide a positive answer to Question~\ref{Q1} by demonstrating that the entries of all local minima are bounded below by a positive constant, which depends only on the coefficients in (\ref{BFEM}). Our result not only addresses a gap in the theoretical analysis of the aforementioned works but is also critical for algorithmic design and convergence analysis, as it helps overcome the non-smoothness issue on the boundary of (\ref{BFEM}). In the next subsection, we summarize several classical algorithms for solving (\ref{BVP}) and introduce our new algorithm, which offers several advantageous properties.



\subsection{Algorithms for Bethe variational problem}\label{Subsec:algo}

In this subsection, we present algorithms for solving (\ref{BVP}), starting with a discussion of several classical approaches.

\medskip

\noindent{\bf Gradient descent method.} Welling and Teh \cite{welling2013belief} proposed a gradient descent (GD) method to find a stationary point of (\ref{BVP}) for $binary$ MRFs. The key insight is that for $r = 2$, problem (\ref{BVP}) can be reformulated as an unconstrained optimization problem with $n$ variables, allowing GD to be applied. Shin refined this approach in \cite{shin2012complexity,shin2014complexity} to get a polynomial time complexity bound for graphs with maximum degree $\O(\log n)$. Additionally, Leisenberger et al. \cite{leisenberger2024convexity} extended this idea using a projected quasi-Newton method to exploit second-order information. For binary pairwise MRFs with $submodular$ cost functions, global minimizers of (\ref{BVP}) have been explored using GD \cite{weller2013bethe}. Despite its theoretical soundness, the GD method is limited to binary networks. For $r > 2$, it is unclear how to reformulate the problem as unconstrained optimization, rendering GD inapplicable in such cases.

\medskip

\noindent{\bf Double-loop algorithm.} Yuille \cite{yuille2001double,yuille2002cccp} introduced a double-loop algorithm called the concave-convex procedure (CCCP) to solve (\ref{BVP}) for any $r \geq 2$. The key observation is that the objective function of (\ref{BVP}) is a difference of convex (DC) functions, enabling the application of DC programming techniques \cite{horst1999dc}. In each outer iteration of CCCP, the concave part is linearized to create a majorization. The obtained convex problem is then solved using a block coordinate minimization (BCM) method applied to its dual problem. Although CCCP can handle any $r \geq 2$, its efficiency is hindered by the iterative nature of BCM, as solving subproblems to high accuracy can be computationally expensive. Furthermore, while CCCP guarantees monotonic decreases in the objective function, the convergence of the variables remains unresolved, motivating the need for algorithms with more rigorous theoretical guarantees. The strengths and weaknesses of classical algorithms for (\ref{BVP}) are summarized in Table~\ref{tab:alg:BEM}.

\begin{table}[h]
\centering
\begin{tabular}{|>{\raggedright\arraybackslash}m{2cm}|>{\raggedright\arraybackslash}m{5cm}|>{\raggedright\arraybackslash}m{5cm}|}
\hline
\textbf{Algorithm} & \textbf{Strengths} & \textbf{Weaknesses} \\ \hline
BP & 
Converges in finite time for tree-structured graphs. &
May diverge for general graphs. \\ \hline

GD & 
Ensures convergence for binary MRFs. &
Not applicable to (\ref{BVP}) for $r > 2$. \\ \hline

CCCP & 
Applicable for any $r \geq 2$. Monotonically decreasing objective function. &
BCM is computationally expensive. No convergence guarantee of variables. \\ \hline

\end{tabular}
\caption{Comparison of classical algorithms for (\ref{BVP}).}
\label{tab:alg:BEM}
\end{table}

The limitations of classical algorithms for (\ref{BVP}) motivate the following question:

\begin{ques}\label{Q2}
Can we design an algorithm for (\ref{BVP}) that satisfies the following properties?
\begin{itemize}
    \item[1.] Applicability to (\ref{BVP}) for any $r \geq 2$.
    \item[2.] Convergence guarantee.
    \item[3.] Easy-to-solve subproblems.
\end{itemize}
\end{ques}
Because the variables in (\ref{BVP}) can be separated into two blocks, $b_{ij}$ and $b_k$, a natural approach is to apply the Alternating Direction Method of Multipliers (ADMM). However, the non-convexity and non-smoothness of the objective function in (\ref{BVP}) present significant challenges for both the implementation and convergence analysis of ADMM. In Section~\ref{Sec:BADMM}, we introduce a variant of the ADMM algorithm, called Bregman ADMM, which possesses all the desirable properties mentioned in Question~\ref{Q2}. The main innovations include the use of a KL-divergence as the penalty function and a nonlinear update of the Lagrange multiplier. These choices not only enable efficient updates but also ensure rigorous convergence guarantees in the non-convex setting.

Beyond its convergence guarantees, Bregman ADMM also features easily solvable subproblems. In Section~\ref{Sec:impl}, we show that these subproblems are highly separable, comprising many independent, small-dimensional, strongly convex, and smooth optimization problems. This separability allows for the use of parallelizable methods, offering a significant advantage over CCCP, whose subproblems rely on non-parallelizable block coordinate descent. Furthermore, in order to obtain a closed-form solution,  we propose incorporating a novel proximal term into the second subproblem.

\subsection{Quantum Bethe variational problem}
Akin to classical graphical models, Leifer and Poulin introduced quantum graphical models in \cite{leifer2008quantum}, with applications in quantum error correction \cite{poulin2008belief}, simulations of many-body quantum systems \cite{mezard2007constraint}, and approximations of ground-state energy \cite{poulin2011markov}. Hastings \cite{hastings2007quantum} further proposed quantum belief propagation (QBP) as a quantum counterpart to classical belief propagation for solving quantum graphical models. Recently, Zhao et al. \cite{zhao2024quantum} explored the connection between QBP and the quantum Bethe variational problem (\ref{QBVP}).

A related class of optimization problems involves semidefinite programming (SDP) relaxations of quantum many-body problems via quantum embedding theories \cite{sun2016quantum,georges1996dynamical,kotliar2006electronic,knizia2012density,knizia2013density}. These models are similar to (\ref{QBVP}) in that they approximate a many-body quantum system by clustering accurate local densities and combining them using local consistency constraints. In particular, Lin and Lindsey \cite{lin2022variational} proposed tightening convex relaxations by intersecting local consistency constraints with the global semidefinite constraint. More recently, Khoo and Lindsey \cite{khoo2024scalable} developed an ADMM-type method to efficiently solve these SDP problems by exploiting the parallelizability of the subproblems. However, the method proposed there does not readily work with entropy, and also there is no convergence guarantee. Compared to the model in \cite{lin2022variational}, (\ref{QBVP}) does not impose a global semidefinite constraint but instead incorporates an additional entropy function. In this work, we focus on (\ref{QBVP}), though we believe extending our approach to solving SDP problems in quantum embedding theories is a promising direction for future research.

A separate class of convex optimization problems stems from two-electron reduced density matrix (2-RDM) theory \cite{mazziotti1998contracted, mazziotti2004realization, cances2006electronic, mazziotti2012structure, zhao2004reduced, li2018semismooth, anderson2013second, nakata2001variational, deprince2010exploiting}. This approach approximates the many-body density function using low-order moments \cite{lasserre2001global}. The formulation of 2-RDM optimization problems differs significantly from both (\ref{QBVP}) and quantum embedding theories, so we do not pursue it in this work.

In Section~\ref{Sec:QT}, we demonstrate how our algorithm, Bregman ADMM, can be extended to solve (\ref{QBVP}). Since our primary focus is on solving classical Bethe variational problems, we defer the detailed formulation of (\ref{QBVP}) and the adaptation of our algorithm to Section~\ref{Sec:QT}. Numerical experiments in Section~\ref{Sec:nume} will highlight the efficiency and robustness of our method compared to various state-of-the-art algorithms for both (\ref{BVP}) and (\ref{QBVP}).

\subsection{Summary of our contributions}
We summarize our contributions as follows:
\begin{itemize} 
\item We prove that the entries of all local minima of (\ref{BVP}) are strictly positive. This result not only addresses a gap in the theoretical analysis found in the literature but also plays a critical role in algorithmic design and convergence analysis.
\item We propose a novel algorithm, Bregman ADMM, for solving (\ref{BVP}) with $r \geq 2$. The algorithm offers convergence guarantees and features simple, easily solvable subproblems. Numerical experiments are conducted to verify the efficiency and robustness of Bregman ADMM.
\item We extend the Bregman ADMM to address the Quantum Bethe Variational Problem, with numerical experiments demonstrating its strong performance.
\end{itemize}

\subsection{Outline of the rest of the paper}\label{Subsec:outl}
In Subsection~\ref{Subsec:nota}, we will provide useful notations and definitions that will be used throughout the paper. In Section~\ref{Sec:land}, we prove that the entries of local minima of (\ref{BVP}) are strictly positive. In Section~\ref{Sec:BADMM}, we propose our algorithm Bregman ADMM and conduct convergence analysis. In Section~\ref{Sec:impl}, we discuss some implementation details of Bregman ADMM. In Section~\ref{Sec:QT}, we will state the quantum Bethe variational problem and the Bregman ADMM for solving it. In Section~\ref{Sec:nume}, we demonstrate the efficiency of our algorithm through numerical experiments. In Section~\ref{Sec:conc}, we end this paper with a conclusion.


\subsection{Notations and definitions}\label{Subsec:nota}

In this subsection, we introduce some notations and definitions. 
\begin{itemize}
\item For any $k\in [n],$ define $q_k:=[b_k(1);\ldots;b_k(r)],$ $c_k:=[-\ln \Psi_k(1);\ldots ;-\ln \Psi_k(r)].$ For any $ij\in \E,$ define $C_{ij},Q_{ij}\in \R^{r\times r}$ such that $C_{ij}(s,t)=-\ln \Psi_{ij}(s,t),\; Q_{ij}(s,t)=b_{ij}(s,t), \; \forall\; s,t\in [r].$ Define 
\begin{equation}\label{defiM}
M:=\max\left\{ \max_{ij\in \E}\{ \|C_{ij}\|_\infty \},\ \max_{k\in [n]}\{ 
\|c_k\|_\infty \} \right\}.
\end{equation}
We assume that $M<\infty,$ i.e., the potential functions in (\ref{MPDF}) are strictly positive.

\item Let $\H_1,\H_2,\H_3$ be the following linear spaces:
\begin{equation}\label{defiH}
\H_1:=\Pi_{k\in [n]}\R^r,\ \H_2:=\Pi_{ij\in \E}\R^{r\times r},\ \H_3:=\Pi_{ij\in \E} (\R^r\times \R^r).
\end{equation}

\item Define ${\bf c}, \bq\in \H_1,$ and ${\bf C}, \bQ\in \H_2$ such that
\begin{equation}\label{deficqCQ}
{\bf c}:= \(c_k\)_{k\in [n]},\ \bq:=\(q_k\)_{k\in [n]},\ {\bf C}:=\(C_{ij}\)_{ij\in \E},\ \bQ:=\(Q_{ij}\)_{ij\in \E},
\end{equation}
and the compact convex sets $\X\subset \H_1,$ $\Y\subset \H_2$ such that
\begin{eqnarray}
\label{defX}
\X &:=&\left\{ \bq\in \H_1:\ \forall k\in [n],\ q_k\in \R^r_+,\ {\bf 1}_r^\top q_k=1 \right\},
\\
\label{defY}
\Y &:=& \left\{ \Q\in \H_2:\ \forall ij\in \E,\ Q_{ij}\in \R^{r\times r}_+,\ \<{\bf 1}_{r\times r},Q_{ij}\>=1 \right\}.
\end{eqnarray}

We use $\ri{\cdot}$ to denote the relative interior of a set. 

\item Define the linear operators $\A:\H_1\rightarrow \H_3$ and $\B:\H_2\rightarrow \H_3$ such that for any $\bq\in \H_1$ and $\bQ\in \H_2,$ 
\begin{equation}\label{defAB}
\A(\bq):=\(-q_i,-q_j\)_{ij\in \E},\ \B(\bQ):=\(Q_{ij} {\bf 1}_r,Q_{ij}^\top {\bf 1}_r\)_{ij\in \E},
\end{equation}
and the functions $\F:\H_1\rightarrow \R$ and $\G:\H_2\rightarrow \R$ such that 
\begin{eqnarray}\label{defF}
&& \hspace{-7mm} 
\F(\bq) :=\<{\bf c},{\bf q}\>-\sum_{k\in [n]} (d_k-1)\<q_k,\log q_k\>,\G(\bQ) := \< {\bf C},{\bf Q} \>+\sum_{ij\in \E}\<\log Q_{ij},Q_{ij}\>.
\end{eqnarray}
It is easy to see that $\F(\bq)$ and $\G(\bQ)$ are smooth inside $\ri{\X}$ and $\ri{\Y}$ respectively. In addition, $\F(\bq)$ is concave in $\X$ and $\G(\bQ)$ is convex in $\Y$. 

\item For any strictly convex function $\phi,$ the Bregman divergence $B_{\phi}(\cdot,\cdot)$ \cite{bregman1967relaxation} is defined as follows:
\begin{equation}\label{Bdis}
B_\phi(x,y)=\phi(x)-\phi(y)-\<\nabla \phi(y),x-y\>,
\end{equation}
which is a nonnegative function.  We will apply the following three-point property frequently
\begin{equation}\label{3p}
\<x-y,\nabla \phi(y)-\nabla \phi(z)\>=B_\phi(x,z)-B_\phi(x,y)-B_\phi(y,z).
\end{equation}

\item

Define affine hulls
\begin{equation}\label{affhull}
{\rm aff}(\X):=\left\{ (x_k)_{k\in [n]}:\ \<x_k,{\bf 1}_r\>=1 \right\},\ {\rm aff}(\Y):=\left\{ (X_{ij})_{ij\in \E}:\ \<X_{ij},{\bf 1}_{r\times r}\>=1 \right\}.
\end{equation}

\item

Define the linear map $\PX:\ \H_1\rightarrow \H_1$ to be the orthogonal projection onto the linear space ${\rm aff}(\X-\X)=\left\{ (x_{k})_{k\in [n]}:\ \<x_k,{\bf 1}_r\>=0 \right\}.$ 
\begin{equation}\label{defiPX}
\forall x\in \H_1,\ \PX(x):={\rm proj}_{{\rm aff}(\X-\X)}(x).
\end{equation}

\item

Define the linear map $\PY:\ \H_2\rightarrow \H_2$ to be the orthogonal projection onto the linear space ${\rm aff}(\Y-\Y)=\left\{ (X_{ij})_{ij\in \E}:\ \<X_{ij},{\bf 1}_{r\times r}\>=0 \right\}:$ 
\begin{equation}\label{defiP}
\forall x\in \H_2,\ \PY(x):={\rm proj}_{{\rm aff}(\Y-\Y)}(x).
\end{equation}

\item

Define the linear map $\Ph: \H_3\rightarrow \H_3$ be the orthogonal projection map of the linear space $\B({\rm aff}(\Y-\Y))$ such that
\begin{equation}\label{defiPh}
\forall x\in \H_3,\ \Ph(x):={\rm proj}_{\B({\rm aff}(\Y-\Y))}(x).
\end{equation}

\item Define $\gamma_{r}>0$ such that 
\begin{equation}\label{defigamma}
\gamma_{r}=\min\left\{\| \PY\circ \B^*(x) \|^2:\ x\in {\rm Im}(\B\circ \PY),\ \|x\|=1 \right\}.
\end{equation}
We have that for any $x\in \H_3$
\begin{equation}\label{frineq}
\|\Ph(x)\|\leq \frac{\|\PY\circ \B^*(\Ph(x))\|}{\sqrt{\gamma_{r}}} =\frac{\|\PY\circ \B^*(x+(\Ph(x)-x))\|}{\sqrt{\gamma_{r}}} =\frac{\|\PY\circ \B^*(x)\|}{\sqrt{\gamma_{r}}}.
\end{equation}
For any ${\bf x}=(x_k)_{k\in [n]}\in {\rm aff}(\X-\X),$ define ${\bf X}:=(X_{ij})_{ij\in \E}$ such that $X_{ij}=x_i x_j^\top.$ It is easy to see that ${\bf X}\in {\rm aff}(\Y-\Y)$ and $\A({\bf x})+\B({\bf X})={\bf 0}.$ Therefore, we have that
\begin{equation}\label{Nov_29_14}
{\rm Im}(\A\circ \PX)\subset {\rm Im}(\B\circ \PY)={\rm Im}(\Ph).
\end{equation}

\item

For any $\tau>0,$ define $\xi_{r,\tau},\hat{\xi}_{r,\tau},\tilde{\xi}_{r,\tau}>0$ as follows:
\begin{eqnarray}
\xi_{r,\tau} &:=& \frac{\sqrt{|\E|}r\( -2\log \tau+2\log 2r+5M \)}{\sqrt{\gamma_{r}}}, \label{defidelta}
\\
\hat{\xi}_{r,\tau} &:=& 
2nM+2(2|\E|-n)\log r+2\xi_{r,\tau}\sqrt{\xi_{r,\tau}^2+4|\E|(M+\log r)}
 \nonumber \\
&&+\,\frac{4\(\xi_{r,\tau}^2+4|\E|(M+\log r)\) }{\tau},
\label{defideltah}
\\[3pt]
\tilde{\xi}_{r,\tau} &:=& 4\hat{\xi}_{r,\tau}+2\sqrt{ \xi_{r,\tau}^2+4|\E|(M+\log r)}.
\label{defideltat}
\end{eqnarray}

\item 

Define 
\begin{equation}\label{defispx}
\triangle_r:=\left\{x\in \R^r:\ {\bf 1}_r^\top x=1,\ x\geq 0\right\}.
\end{equation}

\item Consider linear space ${\bf X},$ a function $f:{\bf X}\rightarrow \R\cup\{+\infty\}$ is said to have the Kurdyka-{\L}ojasiewicz (KL) property \cite{attouch2010proximal} at $x^*\in {\rm dom}( \partial f)$ if there exists $\eta\in (0,+\infty],$ a neighborhood $U$ of $x^*$ and a continuous concave function $\varphi: [0,\eta)\rightarrow \R_+$ such that:
\begin{itemize}
\item[(i)] $\varphi(0)=0$,
\item[(ii)] $\varphi$ is $C^1$ on $(0,\eta)$,
\item[(iii)] for all $s\in (0,\eta),$ $\varphi'(s)>0$,
\item[(iv)] for all $x\in U\cap \{x\in {\bf X}:\ f(x^*)<f(x)<f(x^*)+\eta\},$ the Kurdyka-{\L}ojasiewicz (KL) inequality holds
\begin{equation}\label{KL-ineq}
\varphi'(f(x)-f(x^*)){\rm dist}(0,\partial f(x))\geq 1.
\end{equation}
\end{itemize}
Functions satisfying the KL property including semi-algebraic, subanalytic and log-exp functions \cite{attouch2009convergence,attouch2013convergence}. 
\end{itemize}


\section{Local minima of (\ref{BVP})}\label{Sec:land}

\subsection{(\ref{BVP}) has no boundary local minima}
\label{Subsec:bdis}

Using the notation and definitions in Subsection~\ref{Subsec:nota}, we rewrite problem (\ref{BFEM}) as:
\begin{equation}\label{BVP}
\min\left\{ \F({\bf q}) + \G({\bf Q}) : \ \A(\bq) + \B(\bQ) = {\bf 0}, \ \bq \in \X, \ \bQ \in \Y \right\}. \tag{BVP}
\end{equation}
Yedidia et al. provided a negative answer to Question~\ref{Q1} in \cite[Theorem 9]{yedidia2005constructing}. However, their proof only addressed the case $r=2$ and claimed that the argument could be easily extended to the general case. While their contribution is significant, we argue that the case $r > 2$ is much more intricate than $r = 2$. For $r = 2$, problem (\ref{BVP}) can be reformulated as an unconstrained optimization problem with $n$ variables \cite{welling2013belief}, whose gradient has an explicit form, which significantly simplifies the perturbation analysis. In contrast, for $r \geq 3$, the constraints cannot be eliminated, making the corresponding perturbation analysis considerably more complex.

Here, we provide a complete negative answer to Question~\ref{Q1}. Furthermore, instead of merely proving the positivity of local minima, we establish a quantitative lower bound for the entries of the local minima of (\ref{BVP}). 

\begin{theo}\label{intthm}
Suppose $(\bq^*, \bQ^*)$ is a local minimizer of (\ref{BVP}). Then $\bq^* \geq \sigma$ and $\bQ^* \geq \sigma^2 / (r^2 \exp[4M])$, where 
\begin{equation}\label{defisig}
\sigma := \exp\left[ -2d(G)r(\log r + 1.5M) - 2Mr - 2\log r \right],
\end{equation}
and $M$ and $d(G)$ are as defined in (\ref{defiM}) and the maximum degree of 
$G$, respectively.
\end{theo}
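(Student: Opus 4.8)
I would prove the theorem in two stages. First, show that \emph{every} local minimizer lies in the relative interior of the feasible polytope, i.e.\ all entries of $\bq^*,\bQ^*$ are strictly positive; this is a pure feasible-perturbation argument. Second, now that $\F+\G$ is smooth at $(\bq^*,\bQ^*)$, extract a quantitative lower bound from the first-order stationarity conditions together with a matrix-scaling (Sinkhorn-type) estimate relating $\bQ^*$ to $\bq^*$.

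\textbf{Stage 1: no boundary local minima.} Suppose $q_k^*(s)=0$. Perturb by $q_k\leftarrow(1-\epsilon)q_k^*+\epsilon e_s$, keep $q_l^*$ fixed for $l\neq k$, and for each edge $e$ incident to $k$ replace $Q_e^*$ by $(1-\epsilon)Q_e^*+\epsilon\,e_s(q_j^*)^\top$ (with the transpose if $k$ is the second endpoint), $j$ the other endpoint. A direct check shows both marginal identities of every $Q_e$ are preserved, so feasibility holds and only the block at $k$ and its $d_k$ incident edge-blocks change, with no further propagation. The only contributions to the change of $\F+\G$ that are not $O(\epsilon)$ come from entropies of the newly activated zero entries, and a short computation gives
\begin{equation*}
\F(\bq^\epsilon)+\G(\bQ^\epsilon)-\F(\bq^*)-\G(\bQ^*)\;\le\;\big((d_k-1)-d_k\big)\,\epsilon\log\tfrac1\epsilon+O(\epsilon)\;=\;-\epsilon\log\tfrac1\epsilon+O(\epsilon),
\end{equation*}
negative for small $\epsilon$ — contradicting local optimality, so $\bq^*>0$. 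For a zero entry $Q_{ij}^*(s,t)=0$ (now $q_i^*,q_j^*>0$, so some $Q_{ij}^*(s,t')>0$, $Q_{ij}^*(s',t)>0$ with $s\neq s'$, $t\neq t'$), the $2\times2$ perturbation $Q_{ij}\leftarrow Q_{ij}^*+\epsilon(e_se_t^\top-e_se_{t'}^\top-e_{s'}e_t^\top+e_{s'}e_{t'}^\top)$ preserves both marginals, changes nothing else, and decreases $\G$ by $\epsilon\log\tfrac1\epsilon+O(\epsilon)$; hence $\bQ^*>0$.

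\textbf{Stage 2: the quantitative bound.} Since $(\bq^*,\bQ^*)$ now lies in the relative interior of $\X\times\Y$ intersected with $\A\bq+\B\bQ={\bf 0}$, where $\F+\G$ is smooth, first-order stationarity yields a multiplier $\lambda=(\mu_{ij},\nu_{ij})_{ij\in\E}\in\H_3$ with $\nabla\F(\bq^*)+\A^*\lambda$ constant on each node-block and $\nabla\G(\bQ^*)+\B^*\lambda$ constant on each edge-block, i.e.
\begin{equation*}
\log Q_{ij}^*(s,t)=\beta_{ij}-1-C_{ij}(s,t)-\mu_{ij}(s)-\nu_{ij}(t),\qquad (d_k-1)\log q_k^*(s)=c_k(s)-\!\!\sum_{e\ni k}\!\lambda_{e,k}(s)+\text{const}.
\end{equation*}
Substituting the edge equation into $q_i^*(s)=\sum_t Q_{ij}^*(s,t)$ and using $\|C_{ij}\|_\infty\le M$, the $s$-dependence of the right-hand side is captured within a factor $e^{\pm2M}$, which forces $\mathrm{osc}_s(\log q_i^*(s)+\mu_{ij}(s))\le2M$, and symmetrically $\mathrm{osc}_t(\log q_j^*(t)+\nu_{ij}(t))\le2M$; hence $\lambda_{e,k}(s)=-\log q_k^*(s)+h_e(s)$ with $\mathrm{osc}(h_e)\le2M$. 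Feeding this into the node equation, the $\log q_k^*$ contributions of the $d_k$ incident edges combine with the $(d_k-1)\log q_k^*(s)$ term on the left, leaving $\log q_k^*(s)=-c_k(s)+\sum_{e\ni k}h_e(s)+\text{const}$; so $\mathrm{osc}(\log q_k^*)\le(2+2d(G))M$, and since $\max_s q_k^*(s)\ge1/r$ this gives a lower bound on $q_k^*$ of exactly the form (\ref{defisig}) (the stated $\sigma$ being a deliberately crude version). Leaf nodes, where $d_k=1$ degenerates the node equation, are bounded directly, since then $\mathrm{osc}(\lambda_{e,k})=\mathrm{osc}(c_k)\le2M$ already. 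Finally, the stationarity form $Q_{ij}^*(s,t)=a_s\,e^{-C_{ij}(s,t)}\,b_t$ with marginals $q_i^*,q_j^*$ is a matrix scaling, and a two-line estimate gives $e^{-4M}q_i^*(s)q_j^*(t)\le Q_{ij}^*(s,t)\le e^{4M}q_i^*(s)q_j^*(t)$, which combined with the bound on $\bq^*$ yields the stated bound on $\bQ^*$.

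\textbf{Main obstacle.} Two points carry the weight. In Stage 1 one must design perturbations that stay feasible \emph{without propagating} beyond the block containing the zero, and verify that the $O(\epsilon)$ remainders are genuinely $O(\epsilon)$ with constants that do not blow up as other positive entries shrink; mixing toward a vertex $e_s$ rather than toward the uniform point is precisely what makes the $d_k$ edge-entropy gains outrun the single $(d_k-1)$-weighted node-entropy loss (mixing globally toward uniform fails on dense graphs). In Stage 2 the delicate step is the apparent circularity in bounding the multipliers — naively $\mathrm{osc}(\mu_{ij})$ is controlled by $\mathrm{osc}(\log Q_{ij}^*)$, which is controlled by $\mathrm{osc}(\mu_{ij})$ — broken only by inserting the marginalization constraint, which pins $\mu_{ij}$ to $-\log q_i^*$ up to an oscillation of order $M$; pinning the explicit constant in $\sigma$ afterwards is bookkeeping.
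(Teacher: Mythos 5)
Your proposal is correct, but it takes a genuinely different route from the paper's proof. The paper does not separate a qualitative interiority step from the quantitative bound: it partially minimizes over $\bQ$ to obtain the value-function formulation (\ref{eqBVP}), proves Sinkhorn-type bounds on the entropic-OT dual potentials (Lemma~\ref{lemenOT}), and then, at any $\bq$ having an entry below $\sigma$, constructs an explicit feasible descent direction that moves mass from an entry exceeding $1/r$ onto the small entries (Lemma~\ref{desclem}); the dual bounds control the derivative of $\Lm+\F$ along this direction, the particular constant $\sigma$ is exactly what makes that derivative negative, and continuity of the entropic-OT solution map transfers the contradiction back to (\ref{BVP}). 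You instead first rule out zero entries by a feasible perturbation exploiting the $(d_k-1)$-versus-$d_k$ weighting of node and incident-edge entropies (the infinite slope of $x\log x$ at $0$), and then read the quantitative bound off the exact KKT system at the now-interior minimizer: pinning each edge multiplier to $-\log q_k^*$ up to oscillation $2M$ via the marginalization identity gives $\mathrm{osc}(\log q_k^*)\le 2M(1+d_k)$, hence $q_k^*\ge r^{-1}e^{-2M(1+d(G))}$, which dominates the stated $\sigma$ (a term-by-term comparison you should record explicitly), and the scaling form $Q_{ij}^*(s,t)=a_s e^{-C_{ij}(s,t)}b_t$ with marginals $q_i^*,q_j^*$ gives $Q_{ij}^*(s,t)\ge e^{-4M}q_i^*(s)q_j^*(t)$, again stronger than the claimed $\sigma^2/(r^2\exp[4M])$. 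Note that your multiplier-pinning step is the same mechanism as the potential bounds (\ref{ranl})--(\ref{ranm}) in Lemma~\ref{lemenOT}, so the key estimate is shared in spirit; what differs is the architecture, and each choice buys something. Your route is more elementary at the minimizer, needs no continuity result for the OT solution map (KKT multipliers exist because the active constraints are affine and the objective is smooth near the strictly positive point), and yields a sharper constant; the paper's descent-direction lemma, by contrast, applies at arbitrary non-stationary points with small entries, which is precisely what lets it be reused to prove Proposition~\ref{eqprop} about stationary points of the truncated problem (\ref{BVPt}) --- something your stationarity-based Stage 2 does not directly provide.
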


Theorem~\ref{intthm} implies that problem (\ref{BVP}) shares the same set of minimizers as the following modified problem, provided that $\tau \in (0, \sigma)$:
\begin{equation}\label{BVPt}
\min\left\{ \F({\bf q}) + \G({\bf Q}) : \ \A(\bq) + \B(\bQ) = {\bf 0}, \ \bq \in \X_\tau, \ \bQ \in \Y \right\}, \tag{BVP$_\tau$}
\end{equation}
where $\X_\tau$ is defined as 
\begin{equation}\label{defiXt}
\X_\tau := \left\{\bq \in \X : \bq \geq \tau \right\}.
\end{equation}
In \eqref{defiXt} and other places throughout this paper, the 
notation such as $\bq \geq \tau$ means that all the entries of $\bq$ are greater than or equal to $\tau$.

The main advantage of solving (\ref{BVPt}) lies in the constraint $\bq \geq \tau$, which prevents algorithmic iterations from producing zero entries. This constraint plays a crucial role in the convergence analysis of our algorithms. Since finding local minima for non-convex optimization problems is generally NP-hard \cite{quadNPhard}, we establish the following equivalence result regarding the stationarity of (\ref{BVP}) and (\ref{BVPt}):

\begin{prop}\label{eqprop}
Suppose $\tau \in (0, \sigma)$, where $\sigma$ is defined in (\ref{defisig}). For any stationary point $(\bq^*, \bQ^*)$ of (\ref{BVPt}) such that $\bQ^* \in \ri{\Y}$, we have $\bq^* > \tau$, and $(\bq^*, \bQ^*)$ is also a stationary point of (\ref{BVP}).
\end{prop}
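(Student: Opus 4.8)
The plan is to leverage Theorem~\ref{intthm} together with a careful comparison of the KKT systems of (\ref{BVP}) and (\ref{BVPt}). The key point is that (\ref{BVPt}) differs from (\ref{BVP}) only by the additional inequality constraint $\bq \geq \tau$, so a stationary point of (\ref{BVPt}) is a stationary point of (\ref{BVP}) precisely when the multipliers associated with the constraint $\bq \geq \tau$ vanish (i.e. the constraint is inactive in the KKT sense), which follows if $\bq^* > \tau$ strictly.

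\medskip

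\noindent\textbf{Step 1: Write out the stationarity conditions.} First I would write the KKT conditions for (\ref{BVPt}) at $(\bq^*,\bQ^*)$. Since $\bQ^* \in \ri{\Y}$ and $\G$ is smooth on $\ri{\Y}$, the $\bQ$-block of the stationarity condition involves only $\nabla\G(\bQ^*)$, the multiplier for the linear constraint $\A(\bq)+\B(\bQ)=\mathbf 0$, and the multiplier for the simplex-type constraints defining $\Y$. The $\bq$-block involves $\nabla\F(\bq^*)$ (well-defined as long as $\bq^* > 0$), the same linear-constraint multiplier composed with $\A^*$, the multiplier for $\mathbf 1_r^\top q_k = 1$, and an additional nonnegative multiplier $\mu \geq 0$ for $\bq \geq \tau$ satisfying complementary slackness $\langle \mu, \bq^* - \tau\rangle = 0$.

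\medskip

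\noindent\textbf{Step 2: Show $\bq^* > \tau$.} This is where Theorem~\ref{intthm} enters, but it cannot be applied directly because a stationary point of (\ref{BVPt}) need not be a local minimizer. The idea instead is: a stationary point of (\ref{BVPt}) with $\bQ^* \in \ri{\Y}$ is automatically a stationary point of the \emph{restricted} problem obtained by fixing $\bQ = \bQ^*$ and also, via the marginalization constraint, a stationary point in the $\bq$ variable alone after eliminating $\bQ$; more robustly, I would argue that any stationary point of (\ref{BVPt}) at which some entry of $\bq^*$ equals $\tau$ would, by the structure of $\nabla\F$ (whose derivative in the direction of decreasing a coordinate $q_k(s)$ behaves like $-(d_k-1)(\log q_k(s) + 1) + \dots$ and hence blows up as $q_k(s)\to 0$), force the active-constraint multiplier $\mu$ to be strictly positive on that coordinate — but then by complementary slackness and the explicit form of the gradient, one can trace through exactly the same perturbation/comparison estimates used in the proof of Theorem~\ref{intthm} (this is the analogue, at the level of first-order conditions rather than local optimality, of the lower bound $\sigma$). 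Concretely, the bound $\sigma$ in (\ref{defisig}) is derived from a first-order perturbation argument, so it should apply verbatim to stationary points whose $\bQ$-block lies in $\ri{\Y}$: one shows that if $q^*_k(s) \le \tau < \sigma$ for some $k,s$, moving mass from a large coordinate of $q_k$ toward coordinate $s$ (and adjusting $\bQ^*$ correspondingly along the direction $x_i x_j^\top$ from (\ref{Nov_29_14}), which keeps feasibility of the linear constraint) strictly decreases $\F+\G$, contradicting stationarity of (\ref{BVPt}) since such a perturbation is feasible for (\ref{BVPt}) when $q^*_k(s) < \tau$ would be violated — i.e. contradicting that $\tau$-stationarity and $\sigma > \tau$ are compatible. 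Hence $\bq^* > \tau$.

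\medskip

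\noindent\textbf{Step 3: Conclude.} Once $\bq^* > \tau$ componentwise, the constraint $\bq \geq \tau$ is inactive at $(\bq^*,\bQ^*)$, so by complementary slackness its multiplier $\mu$ is zero. Dropping the zero multiplier from the KKT system of (\ref{BVPt}) leaves exactly the KKT system of (\ref{BVP}) at $(\bq^*,\bQ^*)$ (the feasible set of (\ref{BVP}) near $(\bq^*,\bQ^*)$ coincides with that of (\ref{BVPt}) since $\X$ and $\X_\tau$ agree on a neighborhood of $\bq^*$, and $\bQ^* \in \ri{\Y}$ so the $\Y$-constraints contribute identically). Therefore $(\bq^*,\bQ^*)$ is a stationary point of (\ref{BVP}).

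\medskip

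\noindent\textbf{Main obstacle.} The delicate part is Step~2: transferring the lower-bound estimate of Theorem~\ref{intthm}, which is proved for \emph{local minimizers}, to mere \emph{stationary points} of (\ref{BVPt}). This requires checking that the perturbation used in the proof of Theorem~\ref{intthm} is a genuine first-order feasible descent direction (so that its existence contradicts stationarity, not just local minimality), and that the hypothesis $\bQ^* \in \ri{\Y}$ is exactly what is needed to make the coupled perturbation of $\bQ^*$ stay feasible while keeping $\G$ differentiable. One must also be careful that, at a point with some $q^*_k(s)=\tau$, the relevant directional derivative of $\F$ is still finite (it is, since $\tau>0$), so the argument is about the sign of a finite quantity, governed by the same constant $\sigma$.
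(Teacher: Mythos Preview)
Your strategy is sound and shares the paper's core idea: show by contradiction that $\min\{\bq^*\}=\tau$ is impossible by exhibiting a first-order feasible descent direction. Steps~1 and~3 are correct.

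For Step~2, the paper takes a cleaner route than your joint perturbation in the full $(\bq,\bQ)$-space. It first observes that, since $\bQ^*\in\ri{\Y}$ and the $\bQ$-subproblem for fixed $\bq$ is convex, stationarity forces $\bQ^*$ to be the entropy-OT minimizer for the marginals $\bq^*$; hence $\bq^*$ is stationary for the reduced problem $\min\{\Lm(\bq)+\F(\bq):\bq\in\X_\tau\}$. Then Lemma~\ref{desclem} is invoked directly: part~(i) ensures the perturbation ${\bf v}$ remains feasible for $\X_\tau$, and part~(iii) (applicable since $\bq^*\geq\tau>0$) gives $D_{{\bf v}}(\Lm+\F)(\bq^*)<0$, contradicting stationarity of the reduced problem. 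The ``main obstacle'' you flag is precisely what Lemma~\ref{desclem}(iii) was designed to resolve, so no new computation is needed.

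Your joint-perturbation approach can be made to work---the directional derivative of $\F+\G$ along any $({\bf v},{\bf W})$ with $\A({\bf v})+\B({\bf W})=0$ coincides with $D_{{\bf v}}(\Lm+\F)(\bq^*)$ via the KKT identity for the $\bQ$-block---but you would then have to carry out that identification explicitly, which the reduction to $\Lm$ packages for free. Note also that the specific direction $X_{ij}=v_iv_j^\top$ you cite from~(\ref{Nov_29_14}) does not actually satisfy $\A({\bf v})+\B(\bX)=0$: when ${\bf 1}_r^\top v_k=0$ the row and column sums of $v_iv_j^\top$ vanish, so $\B(\bX)=0\neq-\A({\bf v})$. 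A correct linear choice is, e.g., $X_{ij}=v_i{\bf 1}_r^\top/r+{\bf 1}_r v_j^\top/r$.
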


Proposition~\ref{eqprop} enables us to find stationary points of (\ref{BVP}) by solving the modified problem (\ref{BVPt}) using first-order methods. In the following several subsections, we will prove Theorem~\ref{intthm} and Proposition~\ref{eqprop}.

\subsection{Properties of entropy regularized optimal transport}

Before we prove Theorem~\ref{intthm} and Proposition~\ref{eqprop}, we first prove several lemmas about entropy regularized optimal transport (OT) problem \cite{cuturi2013sinkhorn}.

\begin{lem}\label{lemenOT}
Consider the following entropy regularized OT problem

\begin{equation}\label{enOT}
\min_{X\in \R^{r\times r}_+}\left\{ \<C,X\>+\<\log X,X\>:\ X{\bf 1}_r=u,\ X^\top {\bf 1}_r=v \right\},
\end{equation}
where $u,v>0$ and ${\bf 1}_r^\top u={\bf 1}_r^\top v=1.$ Then the optimal solution $X^*$ can be written as:

\begin{equation}\label{optXenOT}
X^*:=\exp \left[ -C+\lambda {\bf 1}_r^\top+{\bf 1}_r\mu^\top  \right],
\end{equation}
where $\lambda,\mu\in \R^r$ satisfy that
\begin{eqnarray}
\|\lambda-\log u\|_\infty &\leq & 1.5\|C\|_\infty+\log r,
\label{ranl}
\\
\|\mu-\log v\|_\infty &\leq & 1.5\|C\|_\infty+\log r.
\label{ranm}
\end{eqnarray}
In addition, we have that
\begin{equation}\label{minX}
\min_{1\leq i,j\leq r}\{X^*(i,j)\}\geq \frac{\big(\min_{1\leq i,j\leq r}\{u(i),v(j)\}\big)^2}{r^2 \exp[4\|C\|_\infty]}.
\end{equation}
\end{lem}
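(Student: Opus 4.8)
The plan is to analyze the entropy-regularized OT problem \eqref{enOT} through its KKT conditions. Since the objective $\<C,X\>+\<\log X,X\>$ is strictly convex on the positive orthant and the feasible set (transportation polytope with strictly positive marginals $u,v$) has nonempty relative interior, there is a unique optimal solution $X^*$ lying in the relative interior, so $X^*>0$. Writing the Lagrangian with multipliers $\lambda,\mu\in\R^r$ for the row-sum and column-sum constraints and setting the gradient to zero gives $C(i,j)+\log X^*(i,j)+1 = \lambda_i+\mu_j$ componentwise; absorbing the constant $1$ into $\lambda$ (or $\mu$) yields the form \eqref{optXenOT}, namely $X^* = \exp[-C+\lambda{\bf 1}_r^\top+{\bf 1}_r\mu^\top]$. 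So the representation is immediate; the real content is the a priori bounds \eqref{ranl}, \eqref{ranm} on $\lambda-\log u$ and $\mu-\log v$, and the entrywise lower bound \eqref{minX}.

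For the bounds on $\lambda$ and $\mu$, the strategy is to use the marginal constraints to pin down the multipliers. From $X^*{\bf 1}_r = u$ we get $u_i = \sum_j \exp[-C(i,j)+\lambda_i+\mu_j] = e^{\lambda_i}\sum_j e^{-C(i,j)+\mu_j}$, hence $\lambda_i = \log u_i - \log\big(\sum_j e^{-C(i,j)+\mu_j}\big)$, and symmetrically for $\mu_j$ in terms of $\lambda$. The quantity $\sum_j e^{-C(i,j)+\mu_j}$ is sandwiched between $r\,e^{-\|C\|_\infty}e^{\min_j\mu_j}$ and $r\,e^{\|C\|_\infty}e^{\max_j\mu_j}$, which couples $\lambda_i-\log u_i$ to $\max_j\mu_j$, $\min_j\mu_j$, $\|C\|_\infty$ and $\log r$. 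The subtlety is that there is a gauge freedom (replacing $(\lambda,\mu)$ by $(\lambda+t{\bf 1}_r,\mu-t{\bf 1}_r)$ leaves $X^*$ unchanged), so one must first fix the normalization — say choose the representative with $\sum_i\lambda_i$ and $\sum_j\mu_j$ balanced appropriately, or more robustly work with $\max$ and $\min$ of $\lambda-\log u$ and $\mu-\log v$ and derive a closed system of inequalities. Concretely I would let $a:=\max_i(\lambda_i-\log u_i)$, $\underline a:=\min_i(\lambda_i-\log u_i)$ and similarly $b,\underline b$ for $\mu-\log v$; the marginal identities plus $\sum_j e^{-C(i,j)}u_j'$-type estimates (using ${\bf 1}_r^\top u={\bf 1}_r^\top v=1$) give inequalities like $a \le \|C\|_\infty + \log r - \underline b$ and $\underline a \ge -\|C\|_\infty - \log r - b$, and the analogous pair with roles swapped; choosing the gauge so that, e.g., $a = -\underline a$ (or equivalently shifting so $\lambda-\log u$ is centered), one solves this small linear system to conclude $\|\lambda-\log u\|_\infty,\|\mu-\log v\|_\infty \le 1.5\|C\|_\infty+\log r$. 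Tracking constants carefully to land exactly on the factor $1.5$ is where I expect the bookkeeping to be most delicate — this is the main obstacle.

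Finally, \eqref{minX} follows by plugging the bounds back into \eqref{optXenOT}: for each $(i,j)$,
\begin{equation*}
X^*(i,j) = \exp[-C(i,j)+\lambda_i+\mu_j] \ge \exp\big[-\|C\|_\infty + (\log u_i - 1.5\|C\|_\infty - \log r) + (\log v_j - 1.5\|C\|_\infty - \log r)\big],
\end{equation*}
which rearranges to $X^*(i,j) \ge \dfrac{u_i v_j}{r^2\exp[4\|C\|_\infty]} \ge \dfrac{\big(\min_{1\le i,j\le r}\{u_i,v_j\}\big)^2}{r^2\exp[4\|C\|_\infty]}$, as claimed. I would present the proof in the order: (1) existence/uniqueness and positivity of $X^*$ via strict convexity and relative interior of the feasible set; (2) KKT stationarity giving the exponential form \eqref{optXenOT}; (3) the gauge-fixing and the linear system of inequalities yielding \eqref{ranl}–\eqref{ranm}; (4) the one-line substitution giving \eqref{minX}.
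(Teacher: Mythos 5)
Your steps (1), (2) and (4) are fine, and step (4) is exactly the paper's final computation. The gap is in step (3), which is the actual content of the lemma. The system of inequalities you write down, namely $a\le \|C\|_\infty+\log r-\underline{a}'$-type couplings of the form $|a+\underline b|\le \|C\|_\infty+\log r$ and $|\underline a+b|\le \|C\|_\infty+\log r$ together with the gauge $a=-\underline a$, does not bound anything: the assignment $a=b=T$, $\underline a=\underline b=-T$ satisfies all four inequalities and the gauge condition for every $T>0$, so no amount of careful constant-tracking in that system can produce \eqref{ranl}--\eqref{ranm}. What is missing is an inequality that ties $a$ to $\underline a$ (respectively $b$ to $\underline b$) beyond the gauge choice. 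One such ingredient is implicit in the identity you already wrote, $\lambda_i-\log u_i=-\log\bigl(\sum_j e^{-C(i,j)+\mu_j}\bigr)$, which forces the spread $a-\underline a\le 2\|C\|_\infty$; but even with that added, your gauge $a=-\underline a$ gives $\|\lambda-\log u\|_\infty\le\|C\|_\infty$ while the couplings then only yield $\|\mu-\log v\|_\infty\le 2\|C\|_\infty+\log r$, which exceeds the claimed $1.5\|C\|_\infty+\log r$ when $\|C\|_\infty$ is large. So both the closing of the system and the choice of gauge need to change.

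For comparison, the paper resolves the gauge freedom differently and uses the total-mass normalization rather than only the marginal equations: it fixes the gauge $\max_i\lambda(i)=\max_i\mu(i)$, then uses $X^*(i,j)\le 1$ at the argmax indices to get $\max_i\lambda(i)=\max_i\mu(i)\le\|C\|_\infty/2$, and $\max_{i,j}X^*(i,j)\ge 1/r^2$ (since the $r^2$ entries sum to $1$) to get $\max_i\lambda(i)=\max_i\mu(i)\ge-\log r-\|C\|_\infty/2$. Substituting these two-sided bounds on $\mu$ into the row-marginal equation $u=\exp[-C+\lambda{\bf 1}_r^\top+{\bf 1}_r\mu^\top]{\bf 1}_r$ then gives \eqref{ranl} directly (and \eqref{ranm} by symmetry), after which \eqref{minX} follows by the substitution you describe. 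If you want to keep your max/min bookkeeping, you must import one of these extra facts (the spread bound or the $1/r^2\le\max X^*\le$ entrywise $\le 1$ normalization) and choose a gauge that balances the two multipliers, e.g. equalizing $\max_i\lambda_i$ and $\max_j\mu_j$ as the paper does, rather than centering $\lambda-\log u$ alone.
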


\begin{proof}
From Lemma 2 in \cite{cuturi2013sinkhorn},we know that the optimal solution $X^*$ of (\ref{enOT}) can be written in the form (\ref{optXenOT}) for some $\lambda,\mu\in \R^r.$ Note that the Lagrangian multipliers $\lambda,\mu$ are not unique because for any $\alpha\in \R,$ $\lambda':=\lambda-\alpha {\bf 1}_r,$ $\mu':=\mu+\alpha {\bf 1}_r$ also satisfy the equation (\ref{optXenOT}). Thus, without loss of generality, we assume $\max_{i\in [r]}\{\lambda(i)\}=\max_{i\in [r]}\{\mu(i)\}.$ Let $a,b\in [r]$ such that $\lambda(a):=\max_{i\in [r]}\{\lambda(i)\},$ $\mu(b):=\max_{i\in [r]}\{\mu(i)\}.$ From (\ref{optXenOT}), we have that
\begin{equation*}\label{Nov_24_1}
1\geq X(a,b)=\exp[-C(a,b)+\lambda(a)+\mu(b)]\geq \exp[-\|C\|_\infty+\lambda(a)+\mu(b)],
\end{equation*}
from which we have
\begin{equation}\label{Nov_24_2}
\max_{i\in [r]}\{\lambda(i)\}=\max_{i\in [r]}\{\mu(i)\}\leq \|C\|_\infty/2.
\end{equation}
Also, we have that 
\begin{equation*}\label{Nov_24_2.5}
1/r^2\leq \max_{1\leq i,j\leq r}\{X(i,j)\}\leq \exp\left[\|C\|_\infty+\max_{i\in [r]}\{\lambda(i)\}+\max_{i\in [r]}\{\mu(i)\}\right],
\end{equation*}
from which we have
\begin{equation}\label{Nov_24_2.6}
\max_{i\in [r]}\{\lambda(i)\}=\max_{i\in [r]}\{\mu(i)\}\geq -\log r-\|C\|_\infty/2.
\end{equation}
Now, from (\ref{optXenOT}) and (\ref{Nov_24_2}), we have that
\begin{align*}\label{Nov_24_3}
u=\exp\left[ -C+\lambda {\bf 1}_r^\top+{\bf 1}_r \mu^\top \right]{\bf 1}_r &\leq \exp\left[ \| C \|_\infty {\bf 1}_{r\times r}+\lambda {\bf 1}_r^\top+ \|C\|_\infty  {\bf 1}_{r\times r}/2 \right]{\bf 1}_r
\\ \notag
&=\exp(\lambda)\circ \(r\exp(1.5\|C\|_\infty) {\bf 1}_r\).
\end{align*}
Computing $\log(\cdot)$ of the above inequality, we get
\begin{equation}\label{Nov_24_3.5}
\log u-(\log r+1.5\|C\|_\infty){\bf 1}_r \leq \lambda.
\end{equation}
From (\ref{optXenOT}) and (\ref{Nov_24_2.6}), we have that 
\begin{align*}\label{Nov_24_3.6}
&u=\exp\left[ -C+\lambda {\bf 1}_r^\top+{\bf 1}_r \mu^\top \right]{\bf 1}_r=\exp(\lambda)\circ  \(\exp\left[ -C \right]\exp[\mu]\)\\ \notag
&\geq \exp(\lambda)\circ  \(\exp\left[ -\|C\|_\infty {\bf 1}_{r\times r} \right]\exp[\mu]\)\geq \exp(\lambda)\circ \( \exp\left[ -\|C\|_\infty {\bf 1}_r \right]\cdot\exp\left[\max_{i\in [r]}\{\mu(i)\}\right] \)\notag \\
&\geq \exp(\lambda)\circ \exp \left[ -(1.5\|C\|_\infty+\log r){\bf 1}_r \right] \notag.
\end{align*}
Computing $\log(\cdot)$ of the above inequality, we get
\begin{equation}\label{Nov_24_3.7}
\lambda\leq \log u+(1.5\|C\|_\infty+\log r){\bf 1}_r.
\end{equation}
Combining (\ref{Nov_24_3.5}) and (\ref{Nov_24_3.7}), we get (\ref{ranl}). Similarly, we can prove (\ref{ranm}). 

From (\ref{optXenOT}), (\ref{ranl}) and (\ref{ranm}), we have that
\begin{equation}
\min_{1\leq i,j\leq r}\{X^*(i,j)\}\geq \exp\left[-\|C\|_\infty+2\( -\log r +\log\(\min_{1\leq i,j\leq r}\{u(i),v(j)\}\)-1.5\cdot \|C\|_\infty \)\right], \notag
\end{equation}
which implies (\ref{minX}). 
\end{proof}

\subsection{Equivalent formulation of (\ref{BVP})} 

Before we prove Theorem~\ref{intthm}, we first consider an equivalent reformulation of (\ref{BVP}). When $\bq=(q_k)_{k\in [n]} \in \X$ is fixed, the problem (\ref{BVP}) becomes several independent entropy regularized OT problems as follows:
\begin{equation}\label{ijenOT}
\forall ij\in \E,\ {\rm L}_{ij}(q_i,q_j):=\min\left\{ \<C_{ij},Q_{ij}\>+\<\log Q_{ij},Q_{ij}\>:\ Q_{ij}{\bf 1}_r=q_i,\ Q_{ij}^\top {\bf 1}_r=q_j \right\}.
\end{equation}
From Proposition 4.6 of \cite{peyre2019computational}, we know that $L_{ij}:\triangle_r\times \triangle_r\rightarrow \R$ is a convex function and for any $(q_i,q_j)\in \ri{\triangle_r}\times \ri{\triangle_r},$ it is differentiable with gradient given as follows:
\begin{equation}\label{gradFij}
\nabla {\rm L}_{ij}(q_i,q_j)=\begin{bmatrix} (I_r-{\bf 1}_{r\times r}/r)\lambda_{ij}\\ (I_r-{\bf 1}_{r\times r}/r)\mu_{ij} \end{bmatrix},
\end{equation}
where $(\lambda_{ij},\mu_{ij})\in \R^r\times \R^r$ is any optimal dual variable of (\ref{ijenOT}) and $(I_r-{\bf 1}_{r\times r}/r)$ is the projection map on the linear space $\{x\in \R^r:\ e^\top x=0\}.$ (\ref{BVP}) is equivalent to the following problem:
\begin{equation}\label{eqBVP}
\min\left\{ \Lm(\bq)+\F(\bq):\ \bq\in \X\right\},
\end{equation}
where $\Lm(\cdot):\X\rightarrow \R$ is defined by $\Lm(\bq):=\sum_{ij\in \E}{\rm L}_{ij}(q_i,q_j).$ With (\ref{eqBVP}), we now prove the following lemma, which is about the descent direction for a point $\bq$ with small entries.
\begin{lem}\label{desclem}
Suppose $\bq^*\in \X$ such that $\tau:=\min\{\bq^*\}<\sigma$, where $\sigma$ is defined in (\ref{defisig}). Then there exists a descent feasible direction ${\bf v}\in \H_1$ of (\ref{eqBVP}) and $\delta>0$ such that
\begin{itemize}
\item[(i)] For any $t\in (0,\delta),$ $\bq^*+t{\bf v}\geq \tau,$
\item[(ii)] For any $t\in (0,\delta),$ $\Lm(\bq^*+t{\bf v})+\F(\bq^*+t{\bf v})<\Lm(\bq^*)+\F(\bq^*),$
\item[(iii)] If $\bq^*>0,$ then the directional derivative $D_{{\bf v}}\(\Lm(\bq^*)+\F(\bq^*)\)<0.$ 
\end{itemize}
\end{lem}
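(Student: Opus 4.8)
The plan is to construct, for a point $\bq^*$ with $\tau := \min\{\bq^*\} < \sigma$, an explicit feasible direction ${\bf v}$ that simultaneously increases all the small entries of $\bq^*$ while decreasing the objective $\Lm(\bq) + \F(\bq)$. The natural candidate is to move the mass toward the uniform distribution on the nodes where $\bq^*$ is small; concretely, for each node $k$, set ${\bf v}_k$ to be a vector that is positive on the coordinates $i$ with $q_k^*(i) = \tau$ (or $q_k^*(i)$ close to $\tau$) and compensates by a small negative amount on the other coordinates, with $\langle {\bf v}_k, {\bf 1}_r \rangle = 0$ so that feasibility with respect to $\X$ is preserved. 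Choosing ${\bf v}$ supported only on nodes attaining the minimum (and coordinates near it), property (i) — that $\bq^* + t{\bf v} \geq \tau$ for small $t$ — is immediate from the sign pattern, provided $\delta$ is taken small enough to keep the perturbed large entries nonnegative.

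For (ii) and (iii), I would analyze the one-sided directional derivative $D_{{\bf v}}\bigl(\Lm(\bq^*) + \F(\bq^*)\bigr)$. The concave entropy part $\F$ contributes $\langle {\bf c}, {\bf v}\rangle - \sum_k (d_k - 1)\langle {\bf v}_k, \log q_k^* + {\bf 1}_r \rangle$; on the small coordinates where $q_k^*(i) \approx \tau$, the term $-\log q_k^*(i) \approx -\log \tau$ is large and positive, and since ${\bf v}_k$ is positive there this gives a large positive contribution from $-(d_k-1)\langle {\bf v}_k, \log q_k^*\rangle$ — wait, that is the wrong sign for a descent direction, so care is needed: because $\F$ is being \emph{minimized} but is concave, moving \emph{away} from the boundary in fact \emph{decreases} $\F$ since $-\langle q_k, \log q_k\rangle$ is maximized at the uniform point. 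So $D_{{\bf v}}\F$ has a term of order $-(d_k - 1)(-\log \tau) \cdot \|{\bf v}_k\|_1 < 0$, which is very negative when $\tau$ is small. The OT part $\Lm$ contributes $\sum_{ij}\langle \nabla {\rm L}_{ij}(q_i^*, q_j^*), ({\bf v}_i, {\bf v}_j)\rangle$; using the gradient formula \eqref{gradFij} and the dual variable bounds \eqref{ranl}–\eqref{ranm} from Lemma~\ref{lemenOT}, which give $\|\lambda_{ij} - \log q_i^*\|_\infty \le 1.5 M + \log r$, this term is bounded in magnitude by something like $O\bigl(d(G) \cdot (|\log \tau| + M + \log r) \cdot \|{\bf v}\|_1\bigr)$. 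Comparing the coefficients of $|\log \tau|$: $\F$ gives $-(d_k - 1)|\log\tau|$ from the node attaining the minimum, while $\Lm$ can give at most $+d_k |\log\tau|$-ish — this looks dangerously tight, so the construction of ${\bf v}$ must be done carefully, likely spreading the increase across all coordinates where $q_k^*$ is below some threshold strictly larger than $\tau$, so that the net $\Lm$-derivative is controlled. This is exactly where the precise form of $\sigma$ in \eqref{defisig}, with its factor $2d(G)r(\log r + 1.5M)$, comes from: $\sigma$ is chosen small enough that $|\log\sigma|$ dominates all the competing $\Lm$ contributions.

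Once the strict negativity $D_{{\bf v}}\bigl(\Lm(\bq^*) + \F(\bq^*)\bigr) < 0$ is established when $\bq^* > 0$, part (iii) is done, and part (ii) follows for $\bq^*$ with zero entries by a continuity/limiting argument: the directional derivative of $\F$ toward the interior at a boundary point is $+\infty$ in the bad direction but $-\infty$ in the good direction (the entropy has infinite slope), so $\Lm + \F$ strictly decreases along ${\bf v}$ even though it is not differentiable at $\bq^*$ — one uses that $-t\log t \to 0$ but $\frac{d}{dt}(-t\log t) \to +\infty$ as $t \to 0^+$, making the increase of entropy on the newly-positive coordinates overwhelm everything, and $\Lm$ stays finite and Lipschitz along the segment since its arguments stay in $\triangle_r$. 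I would handle this by a direct estimate on $f(t) := \Lm(\bq^* + t{\bf v}) + \F(\bq^* + t{\bf v})$ for $t \in (0,\delta)$, bounding $\Lm(\bq^* + t{\bf v}) - \Lm(\bq^*)$ via convexity and the dual bounds, and computing $\F(\bq^* + t{\bf v}) - \F(\bq^*)$ explicitly using the elementary inequality $-(a+s)\log(a+s) + a\log a \le -s\log s + s$ for the newly-activated coordinates. The main obstacle, as indicated above, is the quantitative bookkeeping: choosing the direction ${\bf v}$ (its support and the split between positive and negative coordinates) so that the favorable $|\log\tau|$-scale term from $\F$ provably beats the unfavorable $\Lm$ term, which forces the somewhat delicate definition of $\sigma$; everything else is routine once that balance is set up correctly.
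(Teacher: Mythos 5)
Your construction of the direction (increase the small coordinates of each offending node, compensate on a large coordinate so that $\<v_k,{\bf 1}_r\>=0$, take $\delta$ small) matches the paper's, and so does the toolkit (the gradient formula \eqref{gradFij} and the dual bounds \eqref{ranl}--\eqref{ranm} from Lemma~\ref{lemenOT}, compared against the definition of $\sigma$). But the mechanism you assign to the descent is inverted, and with your accounting the argument does not close. Since $\F(\bq)=\<{\bf c},\bq\>-\sum_k(d_k-1)\<q_k,\log q_k\>$, the entropy enters $\F$ with a \emph{positive} weight $d_k-1$; moving away from the boundary increases the entropy and hence \emph{increases} $\F$ whenever $d_k\ge 2$, contributing roughly $+(d_k-1)|\log\tau|$ per offending node. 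Your mid-proposal ``correction'' (that moving toward the interior decreases $\F$) is wrong, and your original instinct that the sign is unfavorable was right. The actual source of descent is the OT part: by Lemma~\ref{lemenOT} the dual variables satisfy $\lambda_{kj}\approx\log q_k$ up to an additive error of $1.5M+\log r$ (two-sided, not just a magnitude bound), so $D_{\bf v}\Lm\approx\sum_k d_k\<\log q_k,v_k\>$, which is very \emph{negative} on the small coordinates. The near-cancellation $d_k-(d_k-1)=1$ leaves exactly one copy of $\<\log(q_k^*+tv_k),v_k\>\le\tfrac{|J_k|}{|J_k|+1}\log\sigma+\log r$, and it is this single surviving $\log\sigma$ term that beats the $O\!\left(d(G)r(\log r+1.5M)+Mr\right)$ errors by the choice of $\sigma$ in \eqref{defisig}. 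Treating the $\Lm$-derivative as noise bounded in absolute value by $O\!\left(d(G)(|\log\tau|+M+\log r)\right)$, as you propose, makes the net coefficient of $|\log\tau|$ come out nonnegative, and then no choice of $\sigma$ (which does not scale with $|\log\tau|$) can rescue the inequality.

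The same sign error undermines your treatment of boundary points for part (ii): you argue that the entropy's infinite slope on newly activated coordinates ``overwhelms everything'' while ``$\Lm$ stays finite and Lipschitz along the segment.'' In fact the entropy slope pushes the objective \emph{up} (coefficient $d_k-1$), and $\Lm$ is precisely the term whose directional derivative blows up to $-\infty$ near the boundary (coefficient $d_k$, since its duals track $\log$ of the marginals); it is continuous but not Lipschitz there, and that blow-up is what saves the argument. The paper avoids any separate limiting argument: for $t\in(0,\delta)$ the point $\bq^*+t{\bf v}$ lies in $\ri{\X}$, the derivative formula is valid there and is shown to be negative uniformly on $(0,\delta)$, so $t\mapsto\Lm(\bq^*+t{\bf v})+\F(\bq^*+t{\bf v})$ is decreasing on $(0,\delta)$ and (ii) follows by continuity at $t=0$, with (iii) obtained by evaluating the same estimate at $t=0$ when $\bq^*>0$. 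To repair your proposal you would keep your direction and your use of Lemma~\ref{lemenOT}, but redo the bookkeeping with the roles of $\F$ and $\Lm$ exchanged.
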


\begin{proof}
Suppose $\bQ^*=(Q^*_{ij})_{ij\in \E}$ are global minimizers of the convex problems (\ref{ijenOT}) with given marginals $\bq^*.$ Define $\I\subset [n]$ such that
\begin{equation*}\label{defiI}
\I:=\left\{ k\in [n]:\ \exists\ i\in [r]\ {\rm s.t.}\ q^*_k(i)<\sigma \right\}.
\end{equation*}
For any $k\in \I,$ define $J_k\subset [r]$ and $h_k\in [r]$ such that
\begin{equation*}\label{defiJ}
J_k:=\left\{ i\in [r]:\ q^*_k(i)<\sigma \right\},\ q^*_k(h_k)> 1/r;
\end{equation*}
Because $\sigma\in (0,1/r),$ for any $k\in \I,$ $1\leq |J_k|\leq r-1.$ Define the direction ${\bf v}=(v_k)_{k\in [n]}\in \H_1$ such that 
\begin{equation}\label{defid}
\forall k\in [n]\setminus \I,\ v_k:={\bf 0}_r,\ \forall k\in \I,\ v_k(i):=\begin{cases} 
\frac{1}{|J_k|+1} & i\in J_k \\[3pt] 
\frac{-|J_k|}{|J_k|+1} & i= h_k \\[3pt]
0 & i\in [r]\setminus (J_k\sqcup\{h_k\}).
\end{cases}.
\end{equation}
It is easy to see that for any $k\in [n],$ ${\bf 1}_r^\top v_k=0.$ Moreover, there exists $\delta>0$ such that for any $t\in (0,\delta),k\in \I, i\in J_k$ 
\begin{equation}\label{stsmall}
\bq^*+t {\bf v}\in \ri{\X},\ \bq^*+t {\bf v}\geq \tau,\ q^*_k(i)+tv_k(i)<\sigma,\ q^*_k(h_k)+t v_k(h_k)> 1/r. 
\end{equation}
Therefore, ${\bf v}$ is a feasible direction of (\ref{eqBVP}) that satisfies (i). Next, we proceed to prove (ii). For any $t\in (0,\delta),\ \ij\in \E,$ consider the following entropy OT problems:
\begin{equation}\label{ijenOTt}
\min\left\{ \<C_{ij},Q_{ij}\>+\<\log Q_{ij},Q_{ij}\>:\ Q_{ij}{\bf 1}_r=q^*_i+tv_i,\ Q_{ij}^\top {\bf 1}_r=q^*_j+t v_j \right\}.
\end{equation}
From Lemma~\ref{lemenOT}, we can choose the optimal dual variables $\lambda_{ij}(t),\mu_{ij}(t)\in \R^r$ such that
\begin{eqnarray}
\|\lambda_{ij}(t)-\log (q^*_i+tv_i)\|_\infty &\leq& \log r+1.5M,
\label{ranlij}
\\
\|\mu_{ij}(t)-\log (q^*_j+t v_j)\|_\infty &\leq & \log r+1.5M,
\label{ranmij}
\end{eqnarray}
where $M$ is defined in (\ref{defiM}). From (\ref{defF}), (\ref{gradFij}) and (\ref{stsmall}), we have that for any $t\in (0,\delta),$
\begin{align}\label{multineq}
&\frac{{\rm d}}{{\rm dt}}\( \Lm(\bq^*+t{\bf v})+\F(\bq^*+t{\bf v}) \)\\ \notag
&=\sum_{k\in [n]} \Big(\<c_k,v_k\> -(d_k-1)\<\log (q^*_k+t v_k)+{\bf 1}_r,v_k\>+\sum_{kj\in \E}\<\lambda_{kj}(t),v_k\>+\sum_{ik\in \E}\<\mu_{ik}(t),v_k\>\Big) \\ \notag
&=\sum_{k\in \I} \Big(\<c_k,v_k\> -(d_k-1)\<\log (q^*_k+t v_k),v_k\>+\sum_{kj\in \E}\<\lambda_{kj}(t),v_k\>+\sum_{ik\in \E}\<\mu_{ik}(t),v_k\>\Big) \\ \notag
&\leq \sum_{k\in \I} \Big(\<c_k,v_k\> -(d_k-1)\<\log (q^*_k+t v_k),v_k\> +d_k\<\log (q^*_k+tv_k),v_k\>+d_k r(\log r+1.5M) \Big) \\ \notag
&=\sum_{k\in \I} \big(\<c_k,v_k\> +\<\log (q^*_k+t v_k),v_k\>+d_k r(\log r+1.5M) \big)\\ \notag 
&<\sum_{k\in \I} \big(\<c_k,v_k\>+d_k r(\log r+1.5M) \big)+\sum_{k\in \I} \( \frac{\(\log \sigma\)|J_k|}{|J_k|+1}+\frac{-|J_k|\log \(1/r\)}{|J_k|+1}
\) \\ \notag
&\leq |\I|d(G)r(\log r+1.5M)+|\I|Mr+(|\I|/2)\log \sigma+|\I| \log r\leq 0\notag,
\end{align}
where the second equality comes from that $v_k={\bf 0}_r$ for any $k\in [n]\setminus \I$ and ${\bf 1}_r^\top v_k=0$ for any $k\in [n].$ The first inequality comes from (\ref{ranlij}) and (\ref{ranmij}), the second inequality comes from (\ref{defid}) and (\ref{stsmall}) and the last inequality comes from (\ref{defisig}). Because $\frac{{\rm d}}{{\rm dt}}\( \Lm(\bq^*+t{\bf v})+\F(\bq^*+t{\bf v}) \)<0$ for any $t\in (0,\delta),$ we have that $\Lm(\bq^*+t{\bf v})+\F(\bq^*+t{\bf v})$ is monotonically decreasing on $(0,\delta),$ which implies (ii). 

If $\bq^*>0,$ then (\ref{multineq}) also holds for $t=0.$ This implies (iii).
\end{proof}

With all the preparations, we now prove Theorem~\ref{intthm} and Proposition~\ref{eqprop} in the next two subsections.

\subsection{Proof of Theorem~\ref{intthm}}

\begin{proof}
Because  $(\bq^*,\bQ^*)$ is a local minimizer of (\ref{BVP}), we have that $\bQ^*=(Q^*_{ij})_{ij\in \E}$ are global minimizers of the convex problems (\ref{ijenOT}) with given marginals $\bq^*.$ Assume on the contrary that $\bq^*\geq \sigma$ doesn't hold. From Lemma~\ref{desclem}, there exists a descent feasible direction ${\bf v}$ at $\bq^*$ for (\ref{eqBVP}). Therefore, $\bq^*$ is not a local minimizer of (\ref{eqBVP}). From the continuity of the optimal solution of the entropy OT problem (\ref{ijenOTt}) (see \cite[Theorem 3.11]{eckstein2022quantitative}), we know that $(\bq^*,\bQ^*)$ is also not a local minimizer of (\ref{BVP}). We get a contradiction. Therefore, $\bq^* \geq \sigma.$ From (\ref{minX}) in Lemma~\ref{lemenOT}, we have that $\bQ^*\geq \sigma^2/(r^2\exp[4M]).$
\end{proof}

\subsection{Proof of Proposition~\ref{eqprop}}

\begin{proof}
Because $(\bq^*,\bQ^*)$ is a stationary point of (\ref{BVPt}) such that $\bQ^*\in \ri{\Y},$ we know that $\bq^*\geq \tau.$ Moreover, when $\bq$ is fixed as $\bq^*,$ (\ref{BVPt}) will be a convex optimization problem with respect to $\bQ$. From the stationarity, we have that $\bQ^*$ is the optimal solution of (\ref{BVPt}) for given $\bq^*.$ From the definition of the problem (\ref{eqBVP}), we know that $\bq^*$ is a stationary point of the following optimization problem:
\begin{equation}\label{eqBVP1}
\min\left\{ \Lm(\bq)+\F(\bq):\ \bq\in \X_\tau \right\}.
\end{equation}
We assume on the contrary that $\min\{\bq\}=\tau<\sigma$. Then from Lemma~\ref{desclem}, there exists descent feasible direction ${\bf v}$ of (\ref{eqBVP}) at $\bq^*$ such that (i), (ii), (iii) hold. From (i), ${\bf v}$ is also a descent feasible direction of (\ref{eqBVP1}) at $\bq^*.$ From (iii), we know that $\bq^*$ is not a stationary point of (\ref{eqBVP1}). We get a contradiction. Therefore, $\bq^*> \tau$ and it is obvious that $\bq^*$ is a stationary point of (\ref{eqBVP}). This implies that $(\bq^*,\bQ^*)$ is a stationary point of (\ref{BVP}). 
\end{proof}


\section{Bregman ADMM}\label{Sec:BADMM}

\subsection{Algorithmic framework}

In this subsection, we present our algorithm for solving (\ref{BVPt}). The alternating direction method of multipliers (ADMM) \cite{gabay1976dual,glowinski1975approximation} is widely used in optimization due to its simplicity and broad applications \cite{boyd2011distributed,li2016schur,tang2024self,sun2024accelerating}. The traditional ADMM algorithm for solving (\ref{BVPt}) is outlined in Algorithm~\ref{alg:ADMM}.

\begin{algorithm}
	\renewcommand{\algorithmicrequire}{\textbf{Input:}}
	\renewcommand{\algorithmicensure}{\textbf{Output:}}
	\caption{ADMM for (\ref{BVPt})}
	\label{alg:ADMM}
	\begin{algorithmic}
		\STATE {\bf Initialization}: Choose $\rho > 0$, $\bq^1 \in \ri{\X}, \bQ^1 \in \ri{\Y}, \Lambda^1 \in \H_3$.
		\FOR{$t = 1, 2, \dots$}
		\STATE 1. $\bq^{t+1} \in \arg\min_{\bq \in \X_\tau} \left\{ \F(\bq) - \< \Lambda^t, \A(\bq) \> + \frac{\rho}{2} \| \A(\bq) + \B(\bQ^t) \|^2 \right\}$
		\STATE 2. $\bQ^{t+1} \in \arg\min_{\bQ \in \Y} \left\{ \G(\bQ) - \< \Lambda^t, \B(\bQ) \> + \frac{\rho}{2} \| \A(\bq^{t+1}) + \B(\bQ) \|^2 \right\}$
		\STATE 3. $\Lambda^{t+1} := \Lambda^t - \rho ( \A(\bq^{t+1}) + \B(\bQ^{t+1}) )$
		\ENDFOR
	\end{algorithmic}
\end{algorithm}

While straightforward, Algorithm~\ref{alg:ADMM} uses quadratic penalty terms, which can be problematic because $\F(\bq)$ is strongly concave with its Hessian's smallest eigenvalue having the order of $-\Omega(1/\tau)$ in $\X_\tau$. When $\tau$ is small, $\rho$ must be chosen very large, $\rho = \Omega(1/\tau)$, to ensure that the subproblem in Step 1 remains convex, which makes the algorithm computationally inefficient. When the subproblem is non-convex, finding its global minimizer can be challenging. Even if specialized algorithms could succeed in identifying the global minimizer, the solution may lie on the boundary, which is not a critical point of (\ref{BVPt}).

In \cite{wang2014bregman}, Wang and Banerjee proposed a Bregman ADMM, which uses the Bregman divergences $B_\phi(\cdot,\cdot)$ and $B_\psi(\cdot,\cdot)$ as penalty function and proximal term, as shown in Algorithm~\ref{alg:BADMM}.

\begin{algorithm}
	\renewcommand{\algorithmicrequire}{\textbf{Input:}}
	\renewcommand{\algorithmicensure}{\textbf{Output:}}
	\caption{Bregman ADMM with linear dual update for (\ref{BVPt}) \cite{wang2014bregman}}
	\label{alg:BADMM}
	\begin{algorithmic}
		\STATE {\bf Initialization}: Choose $\rho > 0$, $\bq^1 \in \ri{\X}, \bQ^1 \in \ri{\Y}, \Lambda^1 ={\bf 0}\in \H_3$.
		\FOR{$t = 1, 2, \dots$}
		\STATE 1. $\bq^{t+1} \in \arg\min_{\X_\tau} \left\{ \F(\bq) - \< \Lambda^t, \A(\bq) \> + \rho B_\phi(-\A(\bq), \B(\bQ^t))+B_\psi(\bq,\bq^t) \right\}$
		\STATE 2. $\bQ^{t+1} \in \arg\min_{\Y} \left\{ \G(\bQ) - \< \Lambda^t, \B(\bQ) \> + \rho B_\phi(\B(\bQ), -\A(\bq^{t+1})) \right\}$
		\STATE 3. $\Lambda^{t+1} := \Lambda^t - \rho ( \A(\bq^{t+1}) + \B(\bQ^{t+1}) )$
		\ENDFOR
	\end{algorithmic}
\end{algorithm}
In this paper, because of the special structure of (\ref{BVP}), we choose the function $\phi:\H_3\rightarrow \R,$ and $\psi:\H_1\rightarrow \R$ ($\H_1,\H_3$ are defined in (\ref{defiH})) such that
\begin{equation}\label{defiphipsi}
\forall {\bf x}\in \H_3,\ \phi({\bf x}):=\<{\bf x},\log {\bf x}\>,\quad \forall y\in \H_1,\ \psi({\bf y}):=\sum_{k\in [n]}(d_k-1)\<y_k,\log y_k\>,
\end{equation}
where $d_k$ is the degree of vertex $k.$

\begin{rem}\label{notstricrem}
From the definition of $\psi$ in (\ref{defiphipsi}), it may not be strictly convex if $d_k=1$ for some $k\in [n].$ However, this won't affect our theoretical analysis in this paper. The reason is that the Bregman divergence $B_{\phi}(-\A(\bq),\B(\Q^t))$ is strongly convex with respect to $\bq$.
\end{rem}

In Algorithm~\ref{alg:BADMM}, the subproblem in Step 1 is convex as long as $\rho >0$, contrasting with the stricter requirements in Algorithm~\ref{alg:ADMM}. The convergence of Bregman-type methods such as the Bregman proximal gradient method \cite{hanzely2021accelerated}, stochastic 
Bregman proximal gradient method \cite{ding2023nonconvex} and Bregman proximal point method \cite{yang2022bregman} has been well-studied in the literature. However, the theoretical analysis of Bregman ADMM is still in its infancy. Although Wang and Banerjee proved the convergence of their Bregman ADMM for convex problems, its extension to non-convex problems like (\ref{BVPt}) lacks a theoretical guarantee. While nonconvex ADMM variants have been studied \cite{wang2018convergence,pham2024bregman,liu2023bregman}, they often use quadratic penalties, with the Bregman divergence appearing only as a proximal term. Here, we refer to a Bregman ADMM as an ADMM that uses Bregman divergence as the penalty function, and the convergence of such a method for non-convex problems remains an open question.

\medskip

\noindent {\bf Theoretical challenges:} Analyzing the convergence of Algorithm~\ref{alg:BADMM} involves two key difficulties:
\begin{itemize}
    \item The asymmetry of the Bregman divergence penalty function
    complicates the convergence proofs. In contrast, the convergence proofs of the usual ADMM that employs a quadratic penalty function do not encounter such difficulties. This fact alone hinders a direct adaptation of the proof in Wang and Banerjee \cite{wang2014bregman} for the convergence of convex Bregman ADMM.
    \item The marginal $\G(\bQ)$ defined in (\ref{defF}) is not Lipschitz-continuous on the boundary of $\Y$ defined in (\ref{defY}). However, in existing analyses of non-convex ADMM \cite{wang2019global,yang2017alternating,hong2016convergence,liu2019linearized}, the objective function (whose variable is optimized) in the second subproblem is typically assumed to have a Lipschitz-continuous gradient (When there are more than two blocks, this assumption is applied to the last function.), which ensures that the Lagrangian multiplier can be bounded in terms of the primal variables.
\end{itemize}

To address these challenges, we introduce a Bregman ADMM with nonlinear dual update as stated in Algorithm~\ref{alg:BADMMN}.

\begin{algorithm}
	\renewcommand{\algorithmicrequire}{\textbf{Input:}}
	\renewcommand{\algorithmicensure}{\textbf{Output:}}
	\caption{Bregman ADMM with nonlinear dual update for (\ref{BVPt})}
	\label{alg:BADMMN}
	\begin{algorithmic}
		\STATE {\bf Initialization}: Choose $\rho > 0$, $\bq^1 \in \ri{\X}, \bQ^1 \in \ri{\Y}, \Lambda^1 = {\bf 0}\in \H_3$.
		\FOR{$t = 1, 2, \dots$}
		\STATE 1. $\bq^{t+1} \in \arg\min_{\X_\tau} \left\{ \F(\bq) - \< \Lambda^t, \A(\bq) \> + \rho B_\phi(-\A(\bq), \B(\bQ^t))+B_\psi(\bq,\bq^t) \right\}$
		\STATE 2. $\bQ^{t+1} \in \arg\min_{\Y} \left\{ \G(\bQ) - \< \Lambda^t, \B(\bQ) \> + \rho B_\phi(\B(\bQ), -\A(\bq^{t+1})) \right\}$
		\STATE 3. $\Lambda^{t+1} := \Lambda^t - \rho \( \nabla \phi(\B(\bQ^{t+1})) - \nabla \phi(-\A(\bq^{t+1})) \)$
		\ENDFOR
	\end{algorithmic}
\end{algorithm}

Algorithm~\ref{alg:BADMMN} modifies the dual update to a nonlinear form. This nonlinear dual update is more natural than the linear update in Algorithm~\ref{alg:BADMM} because it aligns with the optimality conditions of the second subproblem, i.e., the gradient of the function in the second subproblem is zero after replacing $\Lambda^t$ by $\Lambda^{t+1}.$ As far as we are aware of, such dual 
update in ADMM has not been considered in the past.
The nonlinear dual update in Algorithm~\ref{alg:BADMMN} plays a crucial role both theoretically and practically. Theoretically, it simplifies the task of bounding the dual variable $\Lambda$. Practically, as observed from our numerical experiments, Algorithm~\ref{alg:BADMMN} demonstrates strong robustness in solving (\ref{BVPt}), whereas Algorithm~\ref{alg:BADMM} rarely achieves convergence. The convergence result of Algorithm~\ref{alg:BADMMN} is stated in the following theorem.

\begin{theo}\label{convthm}
Suppose $\bq^1 \in \X_\tau$, $\bQ^1 \in \ri{\Y}$, $\Lambda^1 = {\bf 0}$, and 
\begin{equation}\label{realranbeta}
\rho \geq \max\left\{ \frac{16(\xi_{r,\tau}^2 + 4|\E|(M + \log r))}{\tau^2}, \frac{128r^4 \exp(8M)}{\gamma_{r}\tau^4}, \frac{64\tilde{\xi}_{r,\tau}^2}{\tau^4} \right\},
\end{equation}
where $\gamma_{r}, \xi_{r,\tau}, \hat{\xi}_{r,\tau}, \tilde{\xi}_{r,\tau}$ are defined in (\ref{defigamma}), (\ref{defidelta}), (\ref{defideltah}), (\ref{defideltat}). Then, the sequence $(\bq^t, \bQ^t, \Ph(\Lambda^t))_{t \in \N^+}$ generated by Algorithm~\ref{alg:BADMMN} converges to a KKT point of (\ref{BVPt}).
\end{theo}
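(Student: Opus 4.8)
The plan is to follow the now-standard Lyapunov (sufficient-decrease) framework for nonconvex ADMM, adapted to the Bregman-penalty, nonlinear-dual-update setting. The first step is to write down the KKT system of \eqref{BVPt} and to record the optimality conditions of the two subproblems in Algorithm~\ref{alg:BADMMN}. Because the dual update in Step~3 is chosen precisely so that $\nabla\G(\bQ^{t+1})-\B^*(\Lambda^{t+1})$ vanishes (modulo the normal cone of $\Y$), the Step-2 optimality condition becomes a clean stationarity statement at $\Lambda^{t+1}$ rather than $\Lambda^t$; likewise the Step-1 condition, after substituting the dual update, should reduce to a perturbed stationarity for $\bq^{t+1}$ with an error term controlled by $B_\psi(\bq^{t+1},\bq^t)$ and by $\|\Lambda^{t+1}-\Lambda^t\|$. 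I would set up the candidate Lyapunov function
\[
\Phi_t := \F(\bq^t)+\G(\bQ^t)-\<\Lambda^t,\A(\bq^t)+\B(\bQ^t)\>+\rho B_\phi(-\A(\bq^t),\B(\bQ^t)) + (\text{a multiple of }B_\psi(\bq^t,\bq^{t-1})),
\]
possibly augmented with a term penalizing $\|\Ph(\Lambda^t)\|^2$ or $\|\Lambda^t-\Lambda^{t-1}\|^2$, and aim to show $\Phi_{t+1}\le \Phi_t - c(\|\bq^{t+1}-\bq^t\|^2+\|\bQ^{t+1}-\bQ^t\|^2)$ for some $c>0$ once $\rho$ satisfies \eqref{realranbeta}.

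The decrease estimate splits into three contributions, handled in order. (a) The $\bq$-update: strong convexity of $\bq\mapsto B_\phi(-\A(\bq),\B(\bQ^t))$ (Remark~\ref{notstricrem}) gives a genuine quadratic gain $-\Omega(\rho)\|\bq^{t+1}-\bq^t\|^2$, which must absorb the non-Lipschitz concavity of $\F$ on $\X_\tau$ — here the constraint $\bq\ge\tau$ is essential, since it caps the curvature of $\F$ at $O(1/\tau)$, and this is exactly why the first threshold $16(\xi_{r,\tau}^2+4|\E|(M+\log r))/\tau^2$ in \eqref{realranbeta} appears. (b) The $\bQ$-update: convexity of $\G$ plus the three-point identity \eqref{3p} applied to $B_\phi$ yields a decrease with no curvature obstruction, and simultaneously produces the cross terms that the dual ascent must pay for. (c) The dual update: the nonlinear update means $\Lambda^{t+1}-\Lambda^t=-\rho(\nabla\phi(\B(\bQ^{t+1}))-\nabla\phi(-\A(\bq^{t+1})))$, so the dual-ascent penalty $\tfrac1\rho\|\Lambda^{t+1}-\Lambda^t\|^2$ must be bounded by the primal movement. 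This is the step where I expect the main difficulty: I must bound $\|\Lambda^{t+1}-\Lambda^t\|$ (or rather its component $\Ph(\Lambda^{t+1}-\Lambda^t)$, using \eqref{frineq} and \eqref{Nov_29_14}) by $O(\|\bQ^{t+1}-\bQ^t\| + \|\bq^{t+1}-\bq^t\|)$ with a constant independent of how close iterates get to the boundary of $\Y$. Since $\nabla\phi=\log(\cdot)+\mathbf 1$ blows up near the boundary, this is not automatic; the resolution is to first prove an \emph{a priori} lower bound $\bQ^t\ge$ const $>0$ and $\bq^t\ge\tau$ along the iterates (propagating boundedness of $\Phi_t$ backwards, in the spirit of Lemma~\ref{lemenOT}'s bound \eqref{minX} and the constants $\xi_{r,\tau},\hat\xi_{r,\tau},\tilde\xi_{r,\tau}$), so that $\nabla\phi$ and $\nabla^2\phi$ are Lipschitz/bounded on the relevant region; the second and third thresholds in \eqref{realranbeta} are the price for this. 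This uniform interior bound, replacing the usual "Lipschitz gradient of the last block" hypothesis, is the technical heart of the argument.

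Granting the sufficient-decrease inequality, the remainder is routine. Summing telescopically and using that $\Phi_t$ is bounded below (the objective of \eqref{BVPt} is bounded below on the compact feasible set, and the residual/Bregman terms are nonnegative once $\Lambda^t$ is controlled), I get $\sum_t(\|\bq^{t+1}-\bq^t\|^2+\|\bQ^{t+1}-\bQ^t\|^2)<\infty$, hence successive differences vanish, hence $\|\Lambda^{t+1}-\Lambda^t\|\to0$ and the feasibility residual $\A(\bq^{t+1})+\B(\bQ^{t+1})\to\mathbf 0$ (using that $\nabla\phi$ is injective and uniformly bi-Lipschitz on the interior region, so $\nabla\phi(\B(\bQ^{t+1}))-\nabla\phi(-\A(\bq^{t+1}))\to0$ forces $\B(\bQ^{t+1})+\A(\bq^{t+1})\to0$). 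Boundedness of $(\bq^t,\bQ^t)$ in the compact set $\X_\tau\times\Y$ and of $\Ph(\Lambda^t)$ (from the uniform dual bound) gives a convergent subsequence; passing to the limit in the perturbed optimality conditions of both subproblems, with all error terms shown to vanish, yields a KKT point of \eqref{BVPt}. Finally, to upgrade subsequential convergence to convergence of the full sequence $(\bq^t,\bQ^t,\Ph(\Lambda^t))$, I invoke the Kurdyka--\L ojasiewicz property: the Lyapunov function $\Phi$ is built from $\<\cdot,\log\cdot\>$ terms, inner products, and linear maps, hence is a log-exp (definable) function on the relevant interior domain and has the KL property (cf.\ the cited references \cite{attouch2013convergence}), so the standard Attouch--Bolte--Svaiter argument applies and gives convergence to a single KKT point.
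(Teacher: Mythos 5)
Your overall route coincides with the paper's: a sufficient-decrease estimate for the Bregman augmented Lagrangian, an a priori bound keeping the iterates away from the boundary of $\Y$ so that $\nabla\G$ is Lipschitz along the trajectory (in the paper this is Lemma~\ref{lemlbQ}, proved by an induction that interleaves the dual bound $\|\Ph(\Lambda^t)\|\leq\xi_{r,\tau}$, the $O(1/\sqrt{\rho})$ feasibility residuals, and the entropic-OT lower bound of Lemma~\ref{lemenOT}), and a KL/tame-function argument to upgrade subsequential convergence to convergence of the whole sequence. However, one step of your plan fails as written. In your item (b) you assert that the $\bQ$-update descent follows from convexity of $\G$ plus the three-point identity \eqref{3p} ``with no curvature obstruction.'' This is exactly where the asymmetry of the Bregman penalty bites: the Step-2 optimality condition produces the inner product $\<\nabla\phi(-\A(\bq^{t+1}))-\nabla\phi(\B(\bQ^{t+1})),\,\B(\bQ^{t+1})-\B(\bQ^t)\>$, in which the gradient arguments and the difference arguments are mismatched, so \eqref{3p} cannot be applied directly. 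The paper swaps the arguments at the cost of an extra error term (Lemma~\ref{nonsymlem}); this error is harmless only because it is quadratic in quantities already shown to be $O(1/\sqrt{\rho})$ by Lemma~\ref{lemlbQ}, and absorbing it is precisely what forces the threshold $64\tilde{\xi}_{r,\tau}^2/\tau^4$ in \eqref{realranbeta}. Without this (or an equivalent device), your one-step inequality is not established, so the telescoping and everything downstream is unsupported.

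A second, smaller inaccuracy: you attribute the first threshold $16(\xi_{r,\tau}^2+4|\E|(M+\log r))/\tau^2$ to the need for $\rho B_\phi$ to absorb the concavity of $\F$ on $\X_\tau$. With the proximal term $B_\psi$ chosen as in \eqref{defiphipsi}, $\F+\psi$ is linear, so the Step-1 subproblem objective is convex for every $\rho>0$ and no largeness of $\rho$ is needed there (this is how the paper's Lemma~\ref{ostlemq} works). What the first threshold actually buys, via \eqref{ranbeta}, is that the feasibility residual is at most $\tau/2$, hence the marginals of $\bQ^{t+1}$ are at least $\tau/2$, and then Lemma~\ref{lemenOT} yields $\bQ^{t+1}\geq \tau^2/(4r^2\exp[4M])$ --- the uniform interior bound that your item (c) correctly identifies as the technical heart. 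So the architecture of your proposal matches the paper, but the descent estimate for the $\bQ$-block and the accounting of the thresholds in \eqref{realranbeta} need to be repaired along the lines above before the argument is complete.
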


The key insights for proving convergence are:
\begin{itemize}
    \item The asymmetry of the Bregman divergence introduces only a higher-order error term, which can be bounded.
    \item The non-smoothness of $\G(\bQ)$ is mitigated by ensuring that the iterates $\{\bQ^k\}$ remain bounded away from zero when $\rho$ is sufficiently large. In this case, $\G(\bQ)$ has a Lipschitz-continuous gradient in the convex hull of the iterates.
\end{itemize}
In the following subsections, we will prove Theorem~\ref{convthm} on the convergence
of Algorithm~\ref{alg:BADMMN}.

\subsection{Lemmas for the convergence analysis of Algorithm~\ref{alg:BADMMN}}
\label{Sec:conv}

We first study the optimality conditions of the subproblems in Algorithm~\ref{alg:BADMMN}. Because of the constraint $\bq\geq \tau,$ $\bq^t\in \ri{\X}.$ Because of the entropy function in $\G(\bQ),$ we know that $\bQ^{t}\in \ri{\Y}.$ Therefore, Step 3 in Algorithm~\ref{alg:BADMMN} is well-defined. The optimality conditions of the subproblems are:
\begin{eqnarray}
{\bf 0} &\in & \nabla \F\(\bq^{t+1}\)+\partial\delta_{\X_\tau}(\bq^{t+1})-\A^*(\Lambda^t)+\rho\A^*\( -\nabla \phi(-\A(\bq^{t+1}))+\nabla \phi(\B(\bQ^{t}))  \)\nonumber 
\\
&&+\nabla \psi(\bq^{t+1})-\nabla \psi(\bq^t), 
\label{KKTq} 
\\[5pt]
{\bf 0} &\in& \nabla \G(\bQ^{t+1})+\partial \delta_{ \Y }(\bQ^{t+1})-\B^*(\Lambda^t)+\rho\B^*\( \nabla \phi(\B(\bQ^{t+1}))-\nabla \phi(-\A(\bq^{t+1})) \).
\label{KKTQ}
\end{eqnarray}
From Step 3 in Algorithm~\ref{alg:BADMMN} and (\ref{KKTQ}), we have the following condition:
\begin{equation}\label{conlm}
{\bf 0}\in \nabla\G(\bQ^{t+1})+\partial \delta_{ \Y }(\bQ^{t+1})-\B^*(\Lambda^{t+1}).
\end{equation} 
Because $\bQ^{t}\in \ri{\Y},$ $\B(\bQ^{t})\in \B(\ri{\Y}).$ In addition,  let $X\in \Y$ be such that $X_{ij}:=q_i^{t}{q_j^{t}}^\top.$ From $\bq^{t}\in \ri{\X}$ and (\ref{defAB}), we have that $X\in \ri{\Y}$ and $\A(\bq^{t})+\B(X)=0.$ This implies that $-\A(\bq^{t})\in \B(\ri{\Y}).$ We summarize the results as follows:
\begin{equation}\label{Nov_24_22}
\forall t\in \N^+,\quad \B(\bQ^{t})\in \B(\ri{\Y}),\ -\A(\bq^{t})\in \B(\ri{\Y}).
\end{equation}


Recall the definition of the linear map $\PY$ in (\ref{defiP}), we have the following lemma.

\begin{lem}\label{afflem}
Suppose ${\bf X}\in \ri{\Y}.$ Then ${\bf 0}\in \partial \delta_{\Y}({\bf X})+{\bf Y}\Leftrightarrow \PY\( {\bf Y} \)={\bf 0}.$ 
\end{lem}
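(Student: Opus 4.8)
The plan is to prove the equivalence by unwinding what each side means via the subdifferential of the indicator function $\delta_{\Y}$ and the structure of $\Y$ as the intersection of the nonnegativity cone $\H_2^+$ with the affine subspace defined by $\langle X_{ij},{\bf 1}_{r\times r}\rangle=1$ for all $ij\in\E$. Since ${\bf X}\in\ri{\Y}$, the nonnegativity constraints are all inactive, so the normal cone $N_{\Y}({\bf X})=\partial\delta_{\Y}({\bf X})$ reduces to the normal cone of the affine subspace $\mathrm{aff}(\Y)$ at ${\bf X}$, which is exactly the orthogonal complement of the direction space $\mathrm{aff}(\Y-\Y)$ — that is, $N_{\Y}({\bf X})=(\mathrm{aff}(\Y-\Y))^{\perp}$. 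I would state this reduction first as the main structural observation.

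Given that reduction, the statement ${\bf 0}\in\partial\delta_{\Y}({\bf X})+{\bf Y}$ is equivalent to $-{\bf Y}\in(\mathrm{aff}(\Y-\Y))^{\perp}$. The plan is then to invoke the definition of $\PY$ in (\ref{defiP}) as the orthogonal projection onto $\mathrm{aff}(\Y-\Y)$: a vector ${\bf Y}$ lies in the orthogonal complement of a subspace iff its projection onto that subspace is zero, i.e. $-{\bf Y}\in(\mathrm{aff}(\Y-\Y))^{\perp}\iff\PY(-{\bf Y})={\bf 0}\iff\PY({\bf Y})={\bf 0}$, using linearity of $\PY$. Chaining these equivalences gives the claim. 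I would present this as a short two-line calculation once the normal-cone reduction is in place.

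The only genuinely delicate point — and the one I would be careful to justify rather than assert — is the reduction of the normal cone at a relative interior point: namely that for a convex set $\Y=\H_2^+\cap\mathrm{aff}(\Y)$ and ${\bf X}\in\ri{\Y}$, one has $N_{\Y}({\bf X})=N_{\mathrm{aff}(\Y)}({\bf X})=(\mathrm{aff}(\Y-\Y))^{\perp}$. This follows because at a point in the relative interior every feasible direction within $\mathrm{aff}(\Y)$ is bidirectional (a small ball in $\mathrm{aff}(\Y)$ around ${\bf X}$ stays in $\Y$), so any subgradient of $\delta_{\Y}$ must annihilate all of $\mathrm{aff}(\Y-\Y)$; conversely anything in $(\mathrm{aff}(\Y-\Y))^{\perp}$ is trivially a subgradient since $\Y\subset\mathrm{aff}(\Y)$. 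I expect this to be the main (and essentially only) obstacle, and it is routine convex analysis; everything after it is bookkeeping with the definition of $\PY$. Concretely, I would write: since ${\bf X}\in\ri{\Y}$, $\partial\delta_{\Y}({\bf X})=(\mathrm{aff}(\Y-\Y))^{\perp}$, hence ${\bf 0}\in\partial\delta_{\Y}({\bf X})+{\bf Y}\iff{\bf Y}\in(\mathrm{aff}(\Y-\Y))^{\perp}\iff\PY({\bf Y})={\bf 0}$.
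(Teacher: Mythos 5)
Your proposal is correct and follows essentially the same route as the paper's proof: since ${\bf X}\in \ri{\Y}$ one has $\partial\delta_{\Y}({\bf X})=\({\rm aff}(\Y-\Y)\)^{\perp}$, and then the equivalence with $\PY({\bf Y})={\bf 0}$ is immediate from the definition of $\PY$ as the orthogonal projection onto ${\rm aff}(\Y-\Y)$. The paper simply asserts the normal-cone identity while you justify it, but the argument is the same.
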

\begin{proof}
Because ${\bf X}\in \ri{\Y},$ we have that $\partial \delta_{\Y}({\bf X})={\rm aff}(\Y-\Y)^\perp.$ This implies ${\bf 0}\in \partial \delta_{\Y}({\bf X})+{\bf Y}\Leftrightarrow \PY\( {\bf Y} \)={\bf 0}.$ 
\end{proof}

The next Lemma will be used to bound the Lagrangian multiplier sequence 
$\{\Lambda^t\}$.

\begin{lem}\label{boundlm}
Suppose $\rho>0$ and $\tau\in (0,1/r),$ then for any $t\geq 2,$ the iterate $(\bq^t,\bQ^t,\Lambda^t)$ from Algorithm~\ref{alg:BADMMN} satisfies the following inequality:
\begin{equation}\label{boundd}
\<\Lambda^t-\Lambda^{t+1},\A(\bq^{t+1})+\B(\bQ^{t+1})\>\leq \frac{\| \nabla \G(\bQ^{t+1})-\nabla\G(\bQ^t) \|^2}{\gamma_{r}\rho},
\end{equation}
where $\gamma_{r}>0$ is defined in (\ref{defigamma}).
\end{lem}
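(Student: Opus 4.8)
\textbf{Proof proposal for Lemma~\ref{boundlm}.}

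The plan is to estimate $\langle\Lambda^t-\Lambda^{t+1},\,\A(\bq^{t+1})+\B(\bQ^{t+1})\rangle$ by sandwiching it between a quadratic lower bound and a Cauchy--Schwarz upper bound. Write $d:=\Lambda^t-\Lambda^{t+1}$, $r:=\A(\bq^{t+1})+\B(\bQ^{t+1})$, and set $a:=\B(\bQ^{t+1})$, $b:=-\A(\bq^{t+1})$, so that $r=a-b$. Since $\nabla\phi({\bf x})=\log{\bf x}+{\bf 1}$, the constant vectors in Step~3 of Algorithm~\ref{alg:BADMMN} cancel and $d=\rho(\log a-\log b)$. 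Because $\bq^{t+1}\in\X_\tau\subset\ri{\X}$ and $\bQ^{t+1}\in\ri{\Y}$, the entries of $a$ and $b$ are strictly positive and at most $1$ (they are row/column sums of the $Q_{ij}^{t+1}$, or entries of the $q_k^{t+1}$), and by (\ref{Nov_24_22}) both $a$ and $b$ lie in $\B(\ri{\Y})$; hence $r=a-b\in\B(\Y)-\B(\Y)\subset\B({\rm aff}(\Y-\Y))={\rm Im}(\Ph)$.

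First I would prove the lower bound $\langle d,r\rangle\ge\rho\|r\|^2$. Indeed, $\langle d,r\rangle=\rho\sum_i(\log a_i-\log b_i)(a_i-b_i)$, and since $a_i,b_i\in(0,1]$, the mean value theorem for $\log$ gives $(\log a_i-\log b_i)(a_i-b_i)\ge(a_i-b_i)^2$ for every $i$ (the intermediate point lies in $(0,1]$, so the slope is at least $1$). Next, since $r\in{\rm Im}(\Ph)$ and $\Ph$ is a self-adjoint orthogonal projection, Cauchy--Schwarz yields $\langle d,r\rangle=\langle\Ph(d),r\rangle\le\|\Ph(d)\|\,\|r\|$. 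Comparing the two bounds gives $\|r\|\le\rho^{-1}\|\Ph(d)\|$ (trivially when $r={\bf 0}$), and therefore $\langle d,r\rangle\le\rho^{-1}\|\Ph(d)\|^2$.

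It then remains to control $\|\Ph(d)\|$ in terms of $\|\nabla\G(\bQ^{t+1})-\nabla\G(\bQ^t)\|$. From (\ref{frineq}) we have $\|\Ph(d)\|^2\le\gamma_{r}^{-1}\|\PY\circ\B^*(d)\|^2$. For $s\in\{t,t+1\}$, condition (\ref{conlm}) together with $\bQ^s\in\ri{\Y}$ and Lemma~\ref{afflem} gives $\PY(\B^*(\Lambda^s))=\PY(\nabla\G(\bQ^s))$; this is precisely where the hypothesis $t\ge2$ enters, since at $s=t$ it requires $\Lambda^t$ to satisfy the optimality relation (\ref{conlm}) rather than being the (arbitrary) initialization $\Lambda^1$. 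Subtracting the two identities, $\PY\circ\B^*(d)=\PY(\nabla\G(\bQ^t)-\nabla\G(\bQ^{t+1}))$, so $\|\PY\circ\B^*(d)\|\le\|\nabla\G(\bQ^{t+1})-\nabla\G(\bQ^t)\|$. Chaining all the estimates yields $\langle d,r\rangle\le\gamma_{r}^{-1}\rho^{-1}\|\nabla\G(\bQ^{t+1})-\nabla\G(\bQ^t)\|^2$, which is the claim.

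The main obstacle, and essentially the only idea, is the squeeze $\rho\|r\|^2\le\langle d,r\rangle\le\|\Ph(d)\|\,\|r\|$. The lower bound is where the KL-type penalty enters, through the expansiveness of $\log$ on $(0,1]$, which in turn relies on all iterates having entries bounded in $(0,1]$ (ensured by the constraint $\bq\ge\tau$ and by the entropy term in $\G$). The upper bound uses the observation that $r\in{\rm Im}(\Ph)$, so that the component of the nonlinear dual increment $d$ orthogonal to $\B({\rm aff}(\Y-\Y))$ makes no contribution. The remaining steps are routine substitutions of (\ref{conlm}), Lemma~\ref{afflem}, and (\ref{frineq}).
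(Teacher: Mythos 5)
Your proof is correct and follows essentially the same route as the paper's: strong monotonicity of $\log$ on $(0,1]$ (which the paper packages as strong convexity of $\hat{\phi}=\phi+\delta_{[0,1]}$ and a co-coercivity inequality from Rockafellar--Wets), the fact that $\A(\bq^{t+1})+\B(\bQ^{t+1})\in{\rm Im}(\Ph)$, the bound (\ref{frineq}), and the dual optimality relation (\ref{conlm}) with Lemma~\ref{afflem} applied at both $t$ and $t+1$ (hence the requirement $t\geq 2$). The only cosmetic difference is that you obtain the key estimate $\<\Lambda^t-\Lambda^{t+1},\A(\bq^{t+1})+\B(\bQ^{t+1})\>\leq \rho^{-1}\|\Ph(\Lambda^t-\Lambda^{t+1})\|^2$ by an elementary mean-value-theorem plus Cauchy--Schwarz squeeze rather than by citing the strong-convexity argument for $\hat{\phi}$.
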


\begin{proof}
From Step 3 in Algorithm~\ref{alg:BADMMN}, we have that 
\begin{align}\label{Nov_24_21.5}
&\<\Lambda^t-\Lambda^{t+1},\A(\bq^{t+1})+\B(\bQ^{t+1})\>\notag \\
&=\rho\< \nabla \phi(\B(\bQ^{t+1}))-\nabla \phi(-\A(\bq^{t+1})),\B(\bQ^{t+1})-(-\A(\bq^{t+1})) \>.
\end{align}
Recall the definition of $\hat{\P}$ in (\ref{defiPh}). Define the function $\hat{\phi}:\ \B({\rm ri}(\Y))\rightarrow \R$ such that
\begin{equation}\label{Nov_24_23}
\forall x\in \B({\rm ri}(\Y)),\ \hat{\phi}(x):=\phi(x)+\delta_{[0,1]}(x).
\end{equation}
Because $\phi (x)+\delta_{[0,1]}(x)=\<x,\log x\>+\delta_{[0,1]}(x)$ is strongly convex with parameter $1,$ $\hat{\phi}$ is also strongly convex with parameter 1. For any $x\in \B(\ri{\Y}),$ because $0<x<1,$ $\hat{\phi}(x)$ is differentiable with gradient $\hat{\P}\(\nabla \phi(x)\).$ 
From the property of a strongly convex function in Proposition 12.6 of \cite{rockafellar2009variational}, we have that for any $x,y\in \B(\ri{\Y}),$
\begin{equation}\label{Nov_24_24}
\<x-y,\hat{\P}(\nabla \phi(x))-\hat{\P}(\nabla \phi(y))\>\leq \| \hat{\P}(\nabla \phi(x))-\hat{\P}(\nabla \phi(y)) \|^2
\end{equation}
From (\ref{Nov_24_22}),we can apply inequality (\ref{Nov_24_24}) with $x=\B(\bQ^{t+1})$ and $y=-\A(\bq^{t+1})$ and get the following inequality
\begin{align}\label{Nov_24_25}
&\<\B(\bQ^{t+1})-(-\A(\bq^{t+1})),\hat{\P}(\nabla \phi(\B(\bQ^{t+1})))-\hat{\P}(\nabla \phi(-\A(\bq^{t+1})))\>\notag \\
&\leq \| \hat{\P}(\nabla \phi(\B(\bQ^{t+1})))-\hat{\P}(\nabla \phi(-\A(\bq^{t+1}))) \|^2.
\end{align}
After applying (\ref{frineq}) with $x=\nabla \phi(\B(\bQ^{t+1}))-\nabla \phi(-\A(\bq^{t+1}))$, we get
\begin{align}\label{Nov_24_28} 
&\| \hat{\P}(\nabla \phi(\B(\bQ^{t+1})))-\hat{\P}(\nabla \phi(-\A(\bq^{t+1}))) \|^2 \notag \\
&\leq \frac{\| \PY\circ  \B^* \(\nabla \phi(\B(\bQ^{t+1}))-\nabla \phi(-\A(\bq^{t+1}))\)\|^2}{\gamma_{r}}=\frac{\| \PY\circ  \B^* \(\Lambda^t-\Lambda^{t+1}\)\|^2}{\gamma_{r}\rho^2}.
\end{align}
Combining (\ref{Nov_24_22}), (\ref{Nov_24_25}) and (\ref{Nov_24_28}), we get
\begin{align}\label{Nov_24_29} 
&\rho\< \nabla \phi(\B(\bQ^{t+1}))-\nabla \phi(-\A(\bq^{t+1})),\B(\bQ^{t+1})-(-\A(\bq^{t+1})) \>\notag \\
&=\rho\< \nabla \phi(\B(\bQ^{t+1}))-\nabla \phi(-\A(\bq^{t+1})),\hat{\P}(\B(\bQ^{t+1})-(-\A(\bq^{t+1}))) \> \notag \\
&=\rho\<\B(\bQ^{t+1})-(-\A(\bq^{t+1})),\hat{\P}(\nabla \phi(\B(\bQ^{t+1})))-\hat{\P}(\nabla \phi(-\A(\bq^{t+1})))\> \notag \\
&\leq \frac{\| \PY\circ  \B^* \(\Lambda^t-\Lambda^{t+1}\)\|^2}{\gamma_{r}\rho}. 
\end{align}
From (\ref{conlm}) and Lemma~\ref{afflem}, we have that 
\begin{equation}\label{Nov_24_30}
\PY(\nabla \G(\bQ^{t+1}))=\PY(\B^*(\Lambda^{t+1})).
\end{equation}
Note that the above equality also holds for all $t\geq 1.$ Substitute (\ref{Nov_24_30}) into (\ref{Nov_24_29}), we get that for any $t\geq 2$
\begin{align}\label{Nov_24_31}
&\rho\< \nabla \phi(\B(\bQ^{t+1}))-\nabla \phi(-\A(\bq^{t+1})),\B(\bQ^{t+1})-(-\A(\bq^{t+1})) \>\notag\\
&\leq \frac{\| \PY(\nabla \G(\bQ^{t+1})-\nabla \G(\bQ^{t})) \|^2}{\gamma_{r}\rho}\leq \frac{\| \nabla \G(\bQ^{t+1})-\nabla \G(\bQ^{t})\|^2}{\gamma_{r}\rho}.
\end{align}
This together with (\ref{Nov_24_21.5}) implies (\ref{boundd}).
\end{proof}

The next lemma will be used to bound various terms in the sequence $
\{(\bq^t,\bQ^t,\Lambda^t)\}$ generated from Algorithm~\ref{alg:BADMMN}.

\begin{lem}\label{lemlbQ}
Suppose $\tau\in (0,1/r)$ and $\xi_{r,\tau},$ $\hat{\xi}_{r,\tau}$ are defined in (\ref{defidelta}), (\ref{defideltah}). If
\begin{equation}\label{ranbeta}
\rho\geq \frac{16\( \xi_{r,\tau}^2+4|\E|(M+\log r) \)}{\tau^2} ,
\end{equation}
then the sequence $(\bq^t,\bQ^t,\Lambda^t)_{t\in \N^+}$ generated by Algorithm~\ref{alg:BADMMN} will satisfy that for any $t'\in \N^+,$ 

\begin{itemize}
\item[(i)] $\|\Ph(\Lambda^{t'})\|\leq \xi_{r,\tau}$\,,
\item[(ii)] $\B(\bQ^{t'+1})\geq \frac{\tau}{2}, 
\quad \bQ^{t'+1}\geq \frac{\tau^2}{4r^2\exp[4M]}$,
\item[(iii)] $\| \A(\bq^{t'+1})+\B(\bQ^{t'+1}) \|\leq \frac{2\sqrt{ \xi_{r,\tau}^2+4|\E|(M+2\log r)}}{\sqrt{\rho}}$,
\item[(iv)] $\| \A(\bq^{t'+2})+\B(\bQ^{t'+1}) \|\leq \frac{4\hat{\xi}_{r,\tau}}{\sqrt{\rho}}.$
\end{itemize}
\end{lem}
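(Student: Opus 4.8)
The plan is to prove the four statements simultaneously by induction on $t'$, since they are intertwined: the bound on $\Ph(\Lambda^{t'})$ in (i) feeds into the lower bound on $\B(\bQ^{t'+1})$ in (ii) via the optimality condition \eqref{conlm}, and the bounds (ii), (iii) then combine with Lemma~\ref{boundlm} to propagate (i) to the next index. So I would set up a single induction where the inductive hypothesis at step $t'$ is ``(i) holds for all indices up to $t'$''. The base case $t'=1$ uses $\Lambda^1 = {\bf 0}$, so $\Ph(\Lambda^1)={\bf 0}$ and (i) is immediate; one still has to check (ii)--(iv) for $t'=1$ directly, which will follow from the same estimates as in the inductive step.

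For the inductive step, the key mechanism is: (a) From \eqref{conlm} and Lemma~\ref{afflem} we get $\PY(\nabla\G(\bQ^{t'+1})) = \PY(\B^*(\Lambda^{t'+1}))$, and since $\G(\bQ) = \langle {\bf C},\bQ\rangle + \sum_{ij}\langle \log Q_{ij}, Q_{ij}\rangle$, the gradient $\nabla\G(\bQ^{t'+1})$ has entries of the form $C_{ij}(s,t) + \log Q_{ij}^{t'+1}(s,t) + 1$; bounding $\|\Ph(\Lambda^{t'})\|\le \xi_{r,\tau}$ together with the optimality conditions \eqref{KKTQ} (reorganized through Step 3) lets me control $\log Q_{ij}^{t'+1}$ entrywise, giving the entrywise lower bound $\bQ^{t'+1}\ge \tau^2/(4r^2\exp[4M])$ in (ii), in the same spirit as Lemma~\ref{lemenOT}, equation \eqref{minX} — here $\B(\bQ^{t'+1})\ge \tau/2$ plays the role of the marginal lower bound $u,v$. (b) For (iii), I would bound $\|\A(\bq^{t'+1})+\B(\bQ^{t'+1})\|$ by comparing the objective value of the Bregman-ADMM iterate against a feasible reference point (e.g. using $\bq^{t'+1}\ge\tau$ and the primal feasibility $\A(\bq^{t'+1})+\B(X)=0$ for $X_{ij}=q_i^{t'+1}{q_j^{t'+1}}^\top$); the strong convexity (parameter $1$) of $\hat\phi$ in $B_\phi$ converts a bound on the decrease in the augmented objective into a bound on $\|\B(\bQ^{t'+1})-(-\A(\bq^{t'+1}))\| = \|\A(\bq^{t'+1})+\B(\bQ^{t'+1})\|$, which is exactly of order $1/\sqrt{\rho}$. (c) For (iv), I would estimate $\|\A(\bq^{t'+2})+\B(\bQ^{t'+1})\|$ using the optimality condition \eqref{KKTq} for the $\bq$-subproblem at iteration $t'+1$: this controls $\A(\bq^{t'+2})$ in terms of $\Lambda^{t'+1}$, $\nabla\F$, $\nabla\psi$, and $\B(\bQ^{t'+1})$, all of which are now bounded by (i), (ii), (iii) and the explicit constants; the resulting bound is of order $\hat\xi_{r,\tau}/\sqrt{\rho}$.

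Finally, to close the induction on (i): from Step 3, $\Lambda^{t'+1}-\Lambda^{t'} = -\rho(\nabla\phi(\B(\bQ^{t'+1}))-\nabla\phi(-\A(\bq^{t'+1})))$, and by \eqref{Nov_24_30} we have $\PY(\B^*(\Lambda^{t'+1})) = \PY(\nabla\G(\bQ^{t'+1}))$; using $\|\Ph(x)\| \le \|\PY\circ\B^*(x)\|/\sqrt{\gamma_r}$ from \eqref{frineq}, the norm $\|\Ph(\Lambda^{t'+1})\|$ is controlled by $\|\nabla\G(\bQ^{t'+1})\|$ (via its $\PY$-projection), and the entrywise lower bound on $\bQ^{t'+1}$ from (ii) keeps $\log Q_{ij}^{t'+1}$, hence $\nabla\G(\bQ^{t'+1})$, bounded; I expect this to reproduce $\|\Ph(\Lambda^{t'+1})\| \le \xi_{r,\tau}$ precisely because of the way $\xi_{r,\tau}$ is defined in \eqref{defidelta} (the $-2\log\tau$ term absorbs $\log(4r^2\exp[4M]/\tau^2)$-type contributions). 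The main obstacle I anticipate is the circularity between (i) and (ii): the lower bound on $\bQ^{t'+1}$ needs the bound on $\Lambda^{t'}$ (or $\Lambda^{t'+1}$), but closing (i) at $t'+1$ needs the lower bound on $\bQ^{t'+1}$ — the induction must be sequenced carefully (prove (ii) from (i) at the same index, then (iii), (iv), then push (i) forward), and the condition \eqref{ranbeta} on $\rho$ is exactly what guarantees the feedback loop is contractive rather than divergent, so the bookkeeping of constants through this loop is where the real work lies.
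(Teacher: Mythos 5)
Your proposal is correct and follows essentially the same route as the paper's proof: induction anchored on the dual bound (i) with base case $\Lambda^1={\bf 0}$, the constraint-violation bound (iii) obtained by comparing the Step-2 objective at $\bQ^{t+1}$ with the rank-one feasible point $X_{ij}=q_i^{t+1}(q_j^{t+1})^\top$ and using strong convexity of the entropy, the entrywise bound in (ii) via the entropy-OT structure and \eqref{minX}, the propagation of (i) through $\PY(\nabla\G(\bQ^{t+1}))=\PY(\B^*(\Lambda^{t+1}))$ and \eqref{frineq} with $\xi_{r,\tau}$ absorbing the $-2\log\tau$ term, and (iv) from the optimality of the Step-1 subproblem at iteration $t+1$. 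The one slip is your proposed sequencing ``(ii) from (i), then (iii)'': the marginal lower bound $\B(\bQ^{t+1})\geq\tau/2$ that you feed into the OT-type argument for (ii) is itself a consequence of (iii) (constraint violation at most $\tau/2$ under \eqref{ranbeta}) together with $\bq^{t+1}\geq\tau$, not of the dual bound alone, so within each induction step (iii) must be established before (ii) — exactly as your part (b) already allows, since its derivation does not use (ii).
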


\begin{proof}
We rewrite the subproblem in Step 2 of Algorithm~\ref{alg:BADMMN} as follows:
\begin{equation}\label{Nov_28_1}
\bQ^{t+1}:= \arg\min_{\bQ\in \Y}\left\{ \G(\bQ)-\< \Lambda^t,\B(\bQ)+\A(\bq^{t+1})\>+\rho B_\phi\( \B(\bQ),-\A(\bq^{t+1}) \)\right\}.
\end{equation}
From the definitions (\ref{defiM}) and (\ref{defF}), we have that 
\begin{equation}\label{ranG}
\forall \bQ\in \Y,\ -|\E|\cdot (M+2\log r)\leq  \G(\bQ)\leq |\E|\cdot M.
\end{equation}
where we have used the fact that 
$$-|\E|M\leq \<\bC,\bQ\>\leq |\E|M,\ -2|\E|\log r \leq \sum_{ij\in \E}\<Q_{ij},\log Q_{ij}\>\leq 0$$ for any $\bQ\in \Y.$ Define $\widehat{\bQ}^{t+1}\in \Y$ such that for any $ij\in \E,$ $\widehat{Q}^{t+1}_{ij}:=q^{t+1}_i ({q^{t+1}_j})^\top.$ We have that $\A(\bq^{t+1})+\B(\widehat{\bQ}^{t+1})={\bf 0}$ and $B_\phi\( \B(\widehat{\bQ}^{t+1}),-\A(\bq^{t+1}) \)=0.$ From (\ref{Nov_28_1}), we have that 
\begin{equation}\label{Nov_28_2}
\G(\bQ^{t+1})-\< \Lambda^t,\B(\bQ^{t+1})+\A(\bq^{t+1})\>+\rho B_\phi\( \B(\bQ^{t+1}),-\A(\bq^{t+1}) \)\leq \G(\widehat{\bQ}^{t+1}).
\end{equation}
From (\ref{defiPh}), (\ref{defigamma}) and (\ref{Nov_24_22}), we have that
\begin{align}\label{Nov_28_3}
&-\< \Lambda^t,\B(\bQ^{t+1})+\A(\bq^{t+1})\>=-\< \Ph(\Lambda^t),\B(\bQ^{t+1})+\A(\bq^{t+1})\> 
\notag\\
&\geq -\| \Ph(\Lambda^t)\|\cdot \| \B(\bQ^{t+1})+\A(\bq^{t+1}) \|.
\end{align}
Substituting (\ref{Nov_28_3}) into (\ref{Nov_28_2}), and using (\ref{ranG}) and the property $B_{\phi}(x,y)\geq \frac{1}{2}\|x-y\|^2$ for $x,y$ in the relative interior of the unit simplex \cite[Prop. 5.1]{beck2003mirror}, we get
\begin{equation}\label{Nov_29_1}
-\| \Ph(\Lambda^t) \|\cdot \| \B(\bQ^{t+1})+\A(\bq^{t+1}) \|+\frac{\rho}{2}\| \A(\bq^{t+1})+\B(\bQ^{t+1}) \|^2\leq 2|\E|\cdot \( M+\log r \).
\end{equation}
Now, we use mathematical induction to prove ($i$), ($ii$), ($iii$) and ($iv$). When $t'=1,$ $(i)$ holds because $\Lambda^1={\bf 0}.$ Suppose $(i)$ holds for $t'=t\in \N^+.$ We will proceed with the following 4 steps.

\medskip

\noindent{\bf Step 1.} We prove that $(iii)$ holds for $t'=t$.

\medskip

Substituting ($i$) into (\ref{Nov_29_1}), we get
\begin{equation}\label{Nov_29_2}
-\xi_{r,\tau}\cdot \| \B(\bQ^{t+1})+\A(\bq^{t+1}) \|+\frac{\rho}{2}\| \A(\bq^{t+1})+\B(\bQ^{t+1}) \|^2\leq 2|\E|\cdot \( M+\log r \),
\end{equation}
which implies that 
\begin{align}\label{Nov_29_3}
&\| \A(\bq^{t+1})+\B(\bQ^{t+1}) \|\leq \frac{ \xi_{r,\tau} +\sqrt{\xi_{r,\tau}^2+4\rho |\E|\cdot (M+\log r) }}{\rho} \notag \\
&\leq \frac{2\sqrt{ \xi_{r,\tau}^2+4|\E|(M+\log r)}}{\sqrt{\rho}}\leq \frac{\tau}{2},
\end{align}
where the last inequality comes from (\ref{ranbeta}). (\ref{Nov_29_3}) implies that ($iii$) holds for $t+1$.

\medskip

\noindent{\bf Step 2.} We prove that $(ii)$ holds for $t'=t.$

\medskip

From (\ref{Nov_29_3}), we have the following condition
\begin{equation}\label{bdmarg}
\forall ij\in \E,\ \|Q^{t+1}_{ij}{\bf 1}_r-q_i^{t+1}\|\leq \frac{\tau}{2},
\quad {\|Q^{t+1}_{ij}}^\top {\bf 1}_r-q_j^{t+1}\|\leq \frac{\tau}{2}.
\end{equation}
Because $q^{t+1}_k\geq \tau {\bf 1}_r$ for all $k\in[n]$,  from (\ref{bdmarg}), we have that 
\begin{equation}\label{bdmargQ}
\forall ij\in \E,\ Q^{t+1}_{ij}{\bf 1}_r \geq \frac{\tau}{2},
\quad {Q^{t+1}_{ij}}^\top {\bf 1}_r\geq \frac{\tau}{2}.
\end{equation}
Because $\bQ^{t+1}$ is the minimizer of (\ref{Nov_28_1}), we have that for any $ij\in \E,$ $Q_{ij}^{t+1}$ is the minimizer of the following entropy OT problem:
\begin{equation*}\label{inexenOT}
\min\left\{ \<C_{ij},X\>+\<\log X,X\>:\ X{\bf 1}_r = Q^{t+1}_{ij}{\bf 1}_r,\ X^\top{\bf 1}_r = {Q^{t+1}_{ij}}^\top{\bf 1}_r \right\}.
\end{equation*}
From Lemma~\ref{lemenOT} and (\ref{bdmargQ}), we have that 
\begin{equation}\label{Nov_29_4}
\forall ij\in \E,\ Q_{ij}^{t+1}\geq \frac{\tau^2}{4r^2\exp[4\|C_{ij}\|_\infty]}.
\end{equation} 
which together with (\ref{bdmargQ}) implies that ($ii$) holds for $t'=t$. 

\medskip

\noindent{\bf Step 3.} We prove that $(i)$ holds for $t'=t+1.$  

\medskip

From (\ref{defiPh}), (\ref{defF}), (\ref{defigamma}), (\ref{frineq}) and (\ref{Nov_24_30}), we have that 
\begin{align}\label{seqineq}
&\| \Ph(\Lambda^{t+1}) \|\leq \frac{\| \PY\circ \B^*(\Lambda^{t+1}) \|}{\sqrt{\gamma_{r}}}= \frac{\| \PY\( \nabla \G(\bQ^{t+1}) \) \|}{\sqrt{\gamma_{r}}}\leq \frac{\|{\bf C}\|+\| \log (\bQ^{t+1}) \|}{\sqrt{\gamma_{r}}}\notag \\
&\leq \frac{\sqrt{|\E|}r\(M+\( -2\log \tau+2\log 2r+4M \) \)}{\sqrt{\gamma_{r}}}\notag \\
&=\frac{\sqrt{|\E|}r\( -2\log \tau+2\log 2r+5M \)}{\sqrt{\gamma_{r}}}=\xi_{r,\tau},
\end{align}
where the third inequality comes from (\ref{Nov_29_4}). Thus, (i) holds for $t'=t+1.$ 

\medskip

\noindent{\bf Step 4.} We prove that $(iv)$ holds for $t'=t.$

\medskip

From the subproblem in Step 1 of Algorithm~\ref{alg:BADMMN}, we get 
\begin{align}\label{Dec_2_1}
&\F(\bq^{t+2})-\<\Lambda^{t+1},\A(\bq^{t+2})+\B(\bQ^{t+1})\>+\rho B_\phi \( -\A(\bq^{t+2}),\B(\bQ^{t+1}) \)+B_\psi(\bq^{t+2},\bq^{t+1})\notag \\
&\leq \F(\bq^{t+1})-\< \Lambda^{t+1},\A(\bq^{t+1})+\B(\bQ^{t+1}) \>+\rho B_\phi \( -\A(\bq^{t+1}),\B(\bQ^{t+1}) \).
\end{align}
From Step 2, we have that 
\begin{equation*}\label{Dec_2_2}
\bq^{t+1}\geq \tau ,\quad 
\bq^{t+2}\geq \tau ,\quad 
\B(\bQ^{t+1})\geq \frac{\tau}{2} .
\end{equation*}
Combining this and the property of KL-divergence, we have that
\begin{align}\label{Dec_2_3}
&B_\phi \( -\A(\bq^{t+1}),\B(\bQ^{t+1}) \)\leq \frac{\| \A(\bq^{t+1})+\B(\bQ^{t+1}) \|^2}{\tau}, \notag\\
&B_\phi \( -\A(\bq^{t+2}),\B(\bQ^{t+1}) \)\geq \frac{\| \A(\bq^{t+2})+\B(\bQ^{t+1}) \|^2}{2}. 
\end{align}
Substituting (\ref{Dec_2_3}) into (\ref{Dec_2_1}) and using (\ref{defiPh}) and (\ref{Nov_24_22}), we have that
\begin{align}\label{Dec_2_4}
&\F(\bq^{t+2})-\|\Ph(\Lambda^{t+1})\|\cdot \|\A(\bq^{t+2})+\B(\bQ^{t+1})\|+\frac{\rho\| \A(\bq^{t+2})+\B(\bQ^{t+1}) \|^2}{2} \notag\\
&\leq \F(\bq^{t+1})+\|\Ph(\Lambda^{t+1})\|\cdot \|\A(\bq^{t+1})+\B(\bQ^{t+1})\|+\frac{\rho\| \A(\bq^{t+1})+\B(\bQ^{t+1}) \|^2}{\tau}.
\end{align}
Using $(i)$ for $t'=t+1,$ $(iii)$ for $t'=t$ and (\ref{defF}) in (\ref{Dec_2_4}), we get
\begin{align}\label{Dec_2_5}
&-\xi_{r,\tau}\|\A(\bq^{t+2})+\B(\bQ^{t+1})\|+\frac{\rho\| \A(\bq^{t+2})+\B(\bQ^{t+1}) \|^2}{2} \notag\\
&\leq 2nM+2(2|\E|-n)\log r+\xi_{r,\tau}\|\A(\bq^{t+1})+\B(\bQ^{t+1})\|+\frac{\rho\| \A(\bq^{t+1})+\B(\bQ^{t+1}) \|^2}{\tau} \notag\\
&\leq 2nM+2(2|\E|-n)\log r+2\xi_{r,\tau}\sqrt{\xi_{r,\tau}^2+4|\E|(M+\log r)}
 \notag\\
&\quad +\frac{4\(\xi_{r,\tau}^2+4|\E|(M+\log r)\) }{\tau}=\hat{\xi}_{r,\tau}.
\end{align}
From (\ref{Dec_2_5}), we have that
\begin{equation}\label{Dec_2_6}
\|\A(\bq^{t+2})+\B(\bQ^{t+1})\|\leq \frac{\xi_{r,\tau}+\sqrt{\xi_{r,\tau}^2+2\rho \hat{\xi}_{r,\tau}}}{\rho}\leq \frac{4\hat{\xi}_{r,\tau}}{\sqrt{\rho}}.
\end{equation}
This implies that $(iv)$ holds for $t'=t$.

Because $(i)$ holds for $t'=t+1,$ by continuing the mathematical induction, we have that $(i),(ii),(iii),(iv)$ hold for all $t'\in \N^+.$ 
\end{proof}

The next lemma will be used to handle the non-symmetric issue of the KL-divergence.

\begin{lem}\label{nonsymlem}
Suppose $x,y,z\in \H_3$ such that $x,y,z\geq \epsilon$ for some $\epsilon>0.$ Then we have the following inequality
\begin{align}\label{nonsymineq}
&|\< \nabla \phi(x)-\nabla \phi(y),y-z \>-\< x-y,\nabla \phi(y)-\nabla \phi(z) \>| \notag \\
&\leq \frac{\|x-y\|^2 \|y-z\|_\infty}{\epsilon^2}+\frac{\|y-z\|^2 \|x-y\|_\infty}{\epsilon^2}.
\end{align}
\end{lem}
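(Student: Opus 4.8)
The plan is to exploit the separability of $\phi(\mathbf x)=\<\mathbf x,\log\mathbf x\>$, so that the left-hand side of \eqref{nonsymineq} decomposes coordinatewise and the whole estimate reduces to a scalar statement about the single-variable convex function $\varphi(t):=t\log t$, whose derivatives $\varphi'(t)=\log t+1$ and $\varphi''(t)=1/t$ are what appear in $\nabla\phi$. Concretely, writing $\ell$ for a generic scalar coordinate of $\H_3$ and $a:=x_\ell,\ b:=y_\ell,\ c:=z_\ell$, all lying in $[\epsilon,\infty)$ by hypothesis, I would bound the left-hand side of \eqref{nonsymineq} by $\sum_\ell|E_\ell|$, where
\[
E_\ell:=\big(\varphi'(a)-\varphi'(b)\big)(b-c)-(a-b)\big(\varphi'(b)-\varphi'(c)\big),
\]
and then estimate each $|E_\ell|$ separately.

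The heart of the argument is that $E_\ell$ measures the failure of $\varphi''$ to be constant: were $\varphi''$ constant, the two products in $E_\ell$ would coincide and $E_\ell$ would vanish. To make this quantitative I would apply the mean value theorem to $\varphi'$ on the intervals with endpoints $\{a,b\}$ and $\{b,c\}$, obtaining $\theta_1$ between $a$ and $b$ and $\theta_2$ between $b$ and $c$ (both $\ge\epsilon$) with $\varphi'(a)-\varphi'(b)=\varphi''(\theta_1)(a-b)$ and $\varphi'(b)-\varphi'(c)=\varphi''(\theta_2)(b-c)$, so that $E_\ell=(a-b)(b-c)\big(\varphi''(\theta_1)-\varphi''(\theta_2)\big)$. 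Since $\varphi''(t)=1/t$, one has $|\varphi''(\theta_1)-\varphi''(\theta_2)|=|\theta_1-\theta_2|/(\theta_1\theta_2)\le|\theta_1-\theta_2|/\epsilon^2$, and because $\theta_1,\theta_2$ sit on (possibly equal to) $b$ at distances at most $|a-b|$ and $|b-c|$ respectively, $|\theta_1-\theta_2|\le|a-b|+|b-c|$. This gives $|E_\ell|\le\epsilon^{-2}\big(|a-b|^2|b-c|+|a-b|\,|b-c|^2\big)$.

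Finally I would sum over $\ell$, apply the triangle inequality, and pull out sup-norms via $\sum_\ell|x_\ell-y_\ell|^2|y_\ell-z_\ell|\le\|y-z\|_\infty\|x-y\|^2$ and $\sum_\ell|x_\ell-y_\ell|\,|y_\ell-z_\ell|^2\le\|x-y\|_\infty\|y-z\|^2$, which reproduces exactly the right-hand side of \eqref{nonsymineq}. I do not expect a genuine obstacle; the only delicate point is the bookkeeping around the mean-value points $\theta_1,\theta_2$ (which interval each lies in, and that both remain $\ge\epsilon$), together with the conceptual step of recognizing at the outset that the mismatch between the two inner products is a second-order quantity governed by the variation of $\varphi''=1/(\cdot)$, which is Lipschitz with constant $\epsilon^{-2}$ on $[\epsilon,\infty)$.
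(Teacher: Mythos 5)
Your proposal is correct and follows essentially the same route as the paper: a coordinatewise second-order estimate around $y$ that ultimately rests on $1/t$ being $\epsilon^{-2}$-Lipschitz on $[\epsilon,\infty)$, followed by summing and pulling out $\|\cdot\|_\infty$ factors exactly as in \eqref{nonsymineq}. The only cosmetic difference is that the paper inserts the common intermediate term $\<(x-y)./y,\,y-z\>=\<x-y,\,(y-z)./y\>$ and bounds each deviation by an integral estimate for $\log$, whereas you apply the mean value theorem twice and bound $|\varphi''(\theta_1)-\varphi''(\theta_2)|$; the two bookkeeping schemes yield the identical bound.
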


\begin{proof}
For any index $i$ of $x,$ we have that 
\begin{equation}\label{Dec_2_7}
[\nabla \phi(x)](i)-[\nabla \phi(y)](i)=\log x(i)-\log y(i)=\int_{y(i)}^{x(i)}\frac{{\rm d}t}{t}.
\end{equation}
From (\ref{Dec_2_7}), we have that
\begin{equation}\label{Dec_2_8}
\left|[\nabla \phi(x)](i)-[\nabla \phi(y)](i)-\frac{x(i)-y(i)}{y(i)}\right|\leq \left|\int_{y(i)}^{x(i)}\left|\frac{1}{t}-\frac{1}{y(i)}\right|{\rm d}t\right|\leq \frac{(x(i)-y(i))^2}{\epsilon^2}.
\end{equation}
From (\ref{Dec_2_8}), we have that
\begin{equation}\label{Dec_2_9}
\left|\< \nabla \phi(x)-\nabla \phi(y),y-z \>-\<(x-y)./y,y-z\>\right|\leq \frac{\|x-y\|^2 \|y-z\|_\infty}{\epsilon^2},
\end{equation}
where ``$./$" is element-wise division. Similarly, we have that
\begin{equation}\label{Dec_2_10}
\left| \< x-y,\nabla \phi(y)-\nabla \phi(z)\>-\<x-y,(y-z)./y\> \right| \leq \frac{\|y-z\|^2 \|x-y\|_\infty}{\epsilon^2}.
\end{equation}
Note that $\<(x-y)./y,y-z\>=\<x-y,(y-z)./y\>.$ After applying the triangle inequality to (\ref{Dec_2_9}) and (\ref{Dec_2_10}), we get (\ref{nonsymineq}).
\end{proof}

Now we state the following two descent lemmas.

\begin{lem}\label{ostlemq}
Suppose $\rho>0$ and $\tau\in (0,1/r),$ then the sequence $\{(\bq^t,\bQ^t,\Lambda^t)\}$ generated from Algorithm~\ref{alg:BADMMN} satisfies the following inequality for any $t\geq 1,$
\begin{align}\label{Nov_24_16}
&\F(\bq^{t+1})+B_\psi(\bq^{t+1},\bq^{t})+\< \Lambda^t,\A(\bq^t)-\A(\bq^{t+1})\>\\ \notag
&-\rho B_\phi(-\A(\bq^t),\B(\bQ^{t}))+\rho B_\phi(-\A(\bq^{t+1}),\B(\bQ^{t}))+\rho B_\phi(-\A(\bq^t),-\A(\bq^{t+1}))\leq \F(\bq^{t}). 
\end{align}
\end{lem}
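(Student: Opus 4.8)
The plan is to read off (\ref{Nov_24_16}) from the first-order optimality condition (\ref{KKTq}) of the Step~1 subproblem, once that subproblem has been rewritten in a convex form. The decisive preliminary observation — and, I believe, the reason the proximal kernel $\psi$ in (\ref{defiphipsi}) is chosen exactly as it is — is the identity $\F(\bq)=\<{\bf c},\bq\>-\psi(\bq)$, immediate from (\ref{defF}). Since $\bq^t\geq\tau$ places $\bq^t$ in $\ri{\X}$, where $\psi$ is differentiable, this gives
\[
\F(\bq)+B_\psi(\bq,\bq^t)=\<{\bf c},\bq\>-\psi(\bq^t)-\<\nabla\psi(\bq^t),\bq-\bq^t\>,
\]
an \emph{affine} function of $\bq$. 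Hence the Step~1 objective equals $\ell^t(\bq)+\rho B_\phi(-\A(\bq),\B(\bQ^t))$ with $\ell^t$ affine; since $B_\phi(\cdot,\B(\bQ^t))$ is convex and $-\A$ is linear, this objective is convex in $\bq$. (The differentiations are legitimate because, by (\ref{Nov_24_22}), the points $-\A(\bq^t)$, $-\A(\bq^{t+1})$, $\B(\bQ^t)$ all lie in $\B(\ri{\Y})$, on which $\phi$ is smooth.)

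Using $\nabla\F(\bq)={\bf c}-\nabla\psi(\bq)$ and the chain rule, the inclusion (\ref{KKTq}) is exactly the optimality condition $-\nabla h^t(\bq^{t+1})\in\partial\delta_{\X_\tau}(\bq^{t+1})$ for the convex function $h^t(\bq):=\ell^t(\bq)+\rho B_\phi(-\A(\bq),\B(\bQ^t))$, i.e.\ $\<\nabla h^t(\bq^{t+1}),\bq-\bq^{t+1}\>\geq 0$ for all $\bq\in\X_\tau$. I would then set $\bq=\bq^t\in\X_\tau$ and expand the two pieces of $\nabla h^t$: the affine piece contributes $\<\nabla\ell^t,\bq^t-\bq^{t+1}\>=\ell^t(\bq^t)-\ell^t(\bq^{t+1})=\big(\F(\bq^t)-\<\Lambda^t,\A(\bq^t)\>\big)-\big(\F(\bq^{t+1})+B_\psi(\bq^{t+1},\bq^t)-\<\Lambda^t,\A(\bq^{t+1})\>\big)$; the $B_\phi$ piece contributes, via the adjoint relation, $\rho\<\nabla\phi(\B(\bQ^t))-\nabla\phi(-\A(\bq^{t+1})),\,\A(\bq^t)-\A(\bq^{t+1})\>$, which the three-point property (\ref{3p}) — applied to $\phi$ at $x=-\A(\bq^t)$, $y=-\A(\bq^{t+1})$, $z=\B(\bQ^t)$ — rewrites as $\rho\big(B_\phi(-\A(\bq^t),\B(\bQ^t))-B_\phi(-\A(\bq^t),-\A(\bq^{t+1}))-B_\phi(-\A(\bq^{t+1}),\B(\bQ^t))\big)$. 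Adding the two contributions and moving the $\Lambda^t$ terms and $B_\phi(-\A(\bq^t),\B(\bQ^t))$ to the proper sides yields exactly (\ref{Nov_24_16}).

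The only substantive point, and the step I would set up most carefully, is the first one: seeing that the concave entropy part of $\F$ is exactly cancelled by the proximal divergence $B_\psi$, so that $\F+B_\psi(\cdot,\bq^t)$ is affine — equivalently, $B_{-\F}=B_\psi$, since $-\F$ and $\psi$ differ only by a linear term. This is what makes $h^t$ convex and the standard Bregman three-point argument applicable; everything after that is formal bookkeeping of signs, and, notably, the argument needs neither a lower bound on $\rho$ nor any bound on the iterates, which is why the lemma holds for every $\rho>0$.
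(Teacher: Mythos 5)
Your proposal is correct and follows essentially the same route as the paper: the paper likewise starts from the optimality condition (\ref{KKTq}), exploits the fact that $\F+\psi=\<{\bf c},\cdot\>$ is linear (phrased there as convexity of $\F+\delta_{\X_\tau}+\psi$, which is exactly your cancellation $B_{-\F}=B_\psi$) to invoke the subgradient inequality at $\bq^{t+1}$ with test point $\bq^t$, and then applies the three-point property (\ref{3p}) to the same triple $\bigl(-\A(\bq^t),-\A(\bq^{t+1}),\B(\bQ^t)\bigr)$. The only difference is bookkeeping (variational inequality for the affine-plus-Bregman objective versus the subgradient inequality), so nothing substantive separates the two proofs.
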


\begin{proof}
From (\ref{KKTq}), we have that
\begin{align}\label{Nov_24_13}
&\A^*(\Lambda^t)+\nabla \psi(\bq^{t})-\rho\A^*\( -\nabla \phi(-\A(\bq^{t+1}))+\nabla \phi(\B(\bQ^{t}))  \)\notag\\
&\in \nabla \F\(\bq^{t+1}\)+\partial\delta_{\X_\tau}(\bq^{t+1})+\nabla \psi(\bq^{t+1}).
\end{align}
From (\ref{Nov_24_13}) and the convexity of $\F(\bq)+\delta_{\X_\tau}(\bq)+\psi(\bq)$, we have that
\begin{align}\label{Nov_24_14}
&\F(\bq^{t+1})+\delta_{\X_\tau}(\bq^{t+1})+\psi(\bq^{t+1})\notag \\
&+\< \A^*(\Lambda^t)+\nabla \psi(\bq^{t})-\rho\A^*\( -\nabla \phi(-\A(\bq^{t+1}))+\nabla \phi(\B(\bQ^{t}))  \),\bq^t-\bq^{t+1} \> \notag\\
&\leq \F(\bq^{t})+\delta_{\X_\tau}(\bq^{t})+\psi(\bq^t).
\end{align}
Because $\bq^{t},\bq^{t+1}\in \X_\tau,$ we have that $\delta_{\X_\tau}(\bq^{t})=\delta_{\X_\tau}(\bq^{t+1})=0.$ Rearranging (\ref{Nov_24_14}), we get
\begin{align}\label{Nov_24_15}
&\F(\bq^{t+1})+\psi(\bq^{t+1})-\psi(\bq^{t})-\<\nabla \psi(\bq^t),\bq^{t+1}-\bq^t\>+\< \Lambda^t,(-\A(\bq^{t+1}))-(-\A(\bq^t))\> \notag \\
&-\rho\< \nabla \phi(-\A(\bq^{t+1}))-\nabla \phi(\B(\bQ^{t})) ,(-\A(\bq^t))-(-\A(\bq^{t+1})) \> \leq \F(\bq^{t}). 
\end{align}
After applying the three-point property (\ref{3p}) and the definition of Bregman divergence (\ref{Bdis}) to (\ref{Nov_24_15}), we get (\ref{Nov_24_16}).
\end{proof}

\begin{lem}\label{ostlemQ}
Suppose $\rho>0$ and $\tau\in (0,1/r),$ then the sequence 
$\{(\bq^t,\bQ^t,\Lambda^t)\}$ generated from Algorithm~\ref{alg:BADMMN} satisfies the following inequality for any $t\geq 2$:
\begin{align}\label{ostineqQ}
&\G(\bQ^{t+1})+\<  \Lambda^t,\B(\bQ^t)-\B(\bQ^{t+1})\>-\rho B_\phi(-\A(\bq^{t+1}),\B(\bQ^t))+\rho B_\phi(-\A(\bq^{t+1}),\B(\bQ^{t+1}))\notag \\
&+\rho B_\phi( \B(\bQ^{t+1}),\B(\bQ^t))\notag \\
&\leq\G(\bQ^t)-B_\G(\bQ^t,\bQ^{t+1})+\frac{\rho\| \A(\bq^{t+1})+ \B(\bQ^{t+1}) \|^2\| \B(\bQ^{t+1})-\B(\bQ^t) \|_\infty}{\min\{\(\A(\bq^{t+1}), \B(\bQ^{t+1}), \B(\bQ^t)\)\}^2}\notag \\
&\quad +\frac{\rho\| \B(\bQ^{t+1})-\B(\bQ^t) \|^2 \| \A(\bq^{t+1})+ \B(\bQ^{t+1}) \|_\infty}{\min\{\(\A(\bq^{t+1}), \B(\bQ^{t+1}), \B(\bQ^t)\)\}^2}.
\end{align}
\end{lem}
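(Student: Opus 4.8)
The plan is to derive (\ref{ostineqQ}) from the optimality condition (\ref{KKTQ}) of the $\bQ$-subproblem together with the exactness of the first-order expansion of the smooth convex function $\G$ on $\ri{\Y}$, and then to absorb the asymmetry of the entropy-type divergence $B_\phi$ via Lemma~\ref{nonsymlem}. Throughout write $u:=-\A(\bq^{t+1})$, $v:=\B(\bQ^{t+1})$ and $w:=\B(\bQ^t)$, so that $v-u=\A(\bq^{t+1})+\B(\bQ^{t+1})$ and $v-w=\B(\bQ^{t+1})-\B(\bQ^t)$; note $u,v,w$ all have strictly positive entries because $\bq^{t+1}\geq\tau$ and $\bQ^t,\bQ^{t+1}\in\ri{\Y}$, so $\epsilon:=\min\{(\A(\bq^{t+1}),\B(\bQ^{t+1}),\B(\bQ^t))\}>0$.

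First I would extract from (\ref{KKTQ}) a subgradient $g\in\partial\delta_{\Y}(\bQ^{t+1})$ with $\nabla\G(\bQ^{t+1})+g=\B^*(\Lambda^t)-\rho\,\B^*(\nabla\phi(v)-\nabla\phi(u))$. Since $\bQ^t\in\Y$, the subgradient inequality for $\delta_{\Y}$ gives $\<g,\bQ^t-\bQ^{t+1}\>\leq 0$, hence
\begin{align*}
\<\nabla\G(\bQ^{t+1}),\bQ^t-\bQ^{t+1}\>\ \geq\ \<\Lambda^t,\B(\bQ^t)-\B(\bQ^{t+1})\>-\rho\<\nabla\phi(v)-\nabla\phi(u),\,w-v\>.
\end{align*}
As $\bQ^{t+1}\in\ri{\Y}$, the definition (\ref{Bdis}) of the Bregman divergence gives $\<\nabla\G(\bQ^{t+1}),\bQ^t-\bQ^{t+1}\>=\G(\bQ^t)-\G(\bQ^{t+1})-B_{\G}(\bQ^t,\bQ^{t+1})$, so rearranging yields
\begin{align*}
\G(\bQ^{t+1})+\<\Lambda^t,\B(\bQ^t)-\B(\bQ^{t+1})\>\ \leq\ \G(\bQ^t)-B_{\G}(\bQ^t,\bQ^{t+1})+\rho\<\nabla\phi(v)-\nabla\phi(u),\,w-v\>.
\end{align*}

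Next, expanding (\ref{Bdis}) directly — equivalently, applying the three-point property (\ref{3p}) with $x=u$, $y=v$, $z=w$ — gives the identity $-B_\phi(u,w)+B_\phi(u,v)+B_\phi(v,w)=\<\nabla\phi(w)-\nabla\phi(v),\,u-v\>$. Adding $\rho$ times this identity to the previous display, I see that the claimed bound (\ref{ostineqQ}) will follow once I verify the scalar inequality
\begin{align*}
\<\nabla\phi(v)-\nabla\phi(u),\,w-v\>+\<\nabla\phi(w)-\nabla\phi(v),\,u-v\>\ \leq\ \frac{\|v-u\|^2\,\|v-w\|_\infty}{\epsilon^2}+\frac{\|v-w\|^2\,\|v-u\|_\infty}{\epsilon^2}.
\end{align*}

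This last inequality isolates the \textbf{asymmetry of the KL divergence, which is the only genuine obstacle}, and it is precisely Lemma~\ref{nonsymlem} applied with $(x,y,z)=(u,v,w)$: that lemma gives $\<\nabla\phi(u)-\nabla\phi(v),\,v-w\>\leq\<u-v,\nabla\phi(v)-\nabla\phi(w)\>+C$, with $C$ the displayed right-hand side. Since $\<\nabla\phi(v)-\nabla\phi(u),\,w-v\>=\<\nabla\phi(u)-\nabla\phi(v),\,v-w\>$ and $\<\nabla\phi(w)-\nabla\phi(v),\,u-v\>=-\<u-v,\nabla\phi(v)-\nabla\phi(w)\>$, the two gradient inner products cancel, leaving exactly $C$. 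Finally $\|v-u\|=\|\A(\bq^{t+1})+\B(\bQ^{t+1})\|$ and $\|v-w\|=\|\B(\bQ^{t+1})-\B(\bQ^t)\|$, so $\rho C$ is exactly the sum of the two error terms in (\ref{ostineqQ}). Beyond this, the only work is the careful bookkeeping of arguments and signs in the Bregman and gradient expressions; every other step reuses tools already developed in the excerpt — (\ref{KKTQ}), the definition (\ref{Bdis}), the three-point property (\ref{3p}), and Lemma~\ref{nonsymlem}.
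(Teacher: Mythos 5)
Your proof is correct and follows essentially the same route as the paper: the optimality condition (\ref{KKTQ}) of the $\bQ$-subproblem, the exact Bregman expansion of $\G$ at $\bQ^{t+1}\in\ri{\Y}$, the three-point property (\ref{3p}), and Lemma~\ref{nonsymlem} to absorb the asymmetry of $B_\phi$. The only cosmetic differences are that you eliminate the normal-cone term via the subgradient inequality $\<g,\bQ^t-\bQ^{t+1}\>\leq 0$, whereas the paper uses Lemma~\ref{afflem} and the projection $\PY$ (exploiting $\bQ^t-\bQ^{t+1}\in{\rm aff}(\Y-\Y)$) to obtain an exact equality, and that you apply the three-point identity before, rather than after, invoking Lemma~\ref{nonsymlem}; these reorderings are algebraically equivalent.
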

\begin{proof}
From (\ref{KKTQ}) and Lemma~\ref{afflem}, we have that 
\begin{equation}\label{Nov_24_17}
\PY\( \B^*(\Lambda^t)-\rho\B^*\( \nabla \phi(\B(\bQ^{t+1}))-\nabla \phi(-\A(\bq^{t+1})) \) \)=\PY\(\nabla \G(\bQ^{t+1})\).
\end{equation}
From (\ref{defF}), (\ref{Bdis}) and $\bQ^t-\bQ^{t+1}\in {\rm aff}(\Y-\Y)$, we have that 
\begin{equation}\label{Nov_24_18}
\G(\bQ^{t+1})+\< \PY\(\nabla \G(\bQ^{t+1})\),\bQ^t-\bQ^{t+1} \>=\G(\bQ^t)-B_\G(\bQ^t,\bQ^{t+1}),
\end{equation}
where $B_\G(\bQ^t,\bQ^{t+1})$ is the Bregman divergence with kernel function $\G.$ Substituting (\ref{Nov_24_17}) into (\ref{Nov_24_18}), we get
\begin{align}\label{Nov_24_19}
&\G(\bQ^{t+1})+\<  \B^*(\Lambda^t)-\rho\B^*\( \nabla \phi(\B(\bQ^{t+1}))-\nabla \phi(-\A(\bq^{t+1})) \) ,\bQ^t-\bQ^{t+1} \>\notag \\
&=\G(\bQ^t)-B_\G(\bQ^t,\bQ^{t+1}).
\end{align}
After some simplifications, we get
\begin{align}\label{Nov_24_20}
&\G(\bQ^{t+1})+\<  \Lambda^t,\B(\bQ^t)-\B(\bQ^{t+1})\>-\rho\<\nabla \phi(-\A(\bq^{t+1}))- \nabla \phi(\B(\bQ^{t+1}))  ,\B(\bQ^{t+1})-\B(\bQ^t) \>\notag \\
&=\G(\bQ^t)-B_\G(\bQ^t,\bQ^{t+1}).
\end{align}
From Lemma~\ref{nonsymlem}, we have that
\begin{align}\label{Nov_24_20.1}
&\big|\<\nabla \phi(-\A(\bq^{t+1}))- \nabla \phi(\B(\bQ^{t+1}))  ,\B(\bQ^{t+1})-\B(\bQ^t) \> \\ \notag
&-\<(-\A(\bq^{t+1}))- (\B(\bQ^{t+1}))  ,\nabla \phi\(\B(\bQ^{t+1})\)-\nabla \phi\(\B(\bQ^t)\) \> \big| 
\\[3pt] \notag
&\leq \frac{ \| \A(\bq^{t+1})+ \B(\bQ^{t+1}) \|^2\| \B(\bQ^{t+1})-\B(\bQ^t) \|_\infty+\| \B(\bQ^{t+1})-\B(\bQ^t) \|^2 \| \A(\bq^{t+1})+ \B(\bQ^{t+1}) \|_\infty }{\min\{\(\A(\bq^{t+1}), \B(\bQ^{t+1}), \B(\bQ^t)\)\}^2}.
\end{align}
Applying (\ref{Nov_24_20.1}) in (\ref{Nov_24_20}), we get
\begin{align}\label{Nov_24_20.2}
&\G(\bQ^{t+1})+\<  \Lambda^t,\B(\bQ^t)-\B(\bQ^{t+1})\>-\rho\<(-\A(\bq^{t+1}))- (\B(\bQ^{t+1}))  ,\nabla \phi(\B(\bQ^{t+1}))-\nabla \phi(\B(\bQ^t)) \>\notag \\
&\leq \G(\bQ^t)-B_\G(\bQ^t,\bQ^{t+1})+\frac{\rho\| \A(\bq^{t+1})+ \B(\bQ^{t+1}) \|^2\| \B(\bQ^{t+1})-\B(\bQ^t) \|_\infty}{\min\{\(\A(\bq^{t+1}), \B(\bQ^{t+1}), \B(\bQ^t)\)\}^2}\notag \\
&\quad +\frac{\rho\| \B(\bQ^{t+1})-\B(\bQ^t) \|^2 \| \A(\bq^{t+1})+ \B(\bQ^{t+1}) \|_\infty}{\min\{\(\A(\bq^{t+1}), \B(\bQ^{t+1}), \B(\bQ^t)\)\}^2}.
\end{align}
Applying the three-point property (\ref{3p}) in (\ref{Nov_24_20.2}), we get (\ref{ostineqQ}).
\end{proof}

With all the preparations, we are now ready to prove Theorem~\ref{convthm} in the next subsection.

\subsection{Proof of Theorem~\ref{convthm}}

\begin{proof}

We will prove Theorem~\ref{convthm} in the following three steps.

\medskip

\noindent{\bf Step 1.} Deriving one-step inequality

\medskip

From Lemma~\ref{lemlbQ}, we have that for any $t\in \N^+,$ 
\begin{eqnarray}\label{Dec_2_11}
\| \A(\bq^{t+1})+\B(\bQ^{t+1}) \|
&\leq& \frac{2\sqrt{ \xi_{r,\tau}^2+4|\E|(M+\log r)}}
{\sqrt{\rho}} \leq  \frac{\tilde{\xi}_{r,\tau}}{\sqrt{\rho}}
\\
\label{Dec_2_12}
\| \A(\bq^{t+2})+\B(\bQ^{t+1}) \|
&\leq& \frac{4\hat{\xi}_{r,\tau}}{\sqrt{\rho}},
\end{eqnarray}
where $\xi_{r,\tau}$ and $\hat{\xi}_{r,\tau}$ are defined in (\ref{defidelta}) and (\ref{defideltah}). Applying triangle inequality to (\ref{Dec_2_11}) and (\ref{Dec_2_12}), we get
\begin{equation}\label{Dec_2_13}
\| \B(\bQ^{t+2})-\B(\bQ^{t+1}) \|\leq \frac{4\hat{\xi}_{r,\tau}+2\sqrt{ \xi_{r,\tau}^2+4|\E|(M+\log r)}}{\sqrt{\rho}}=\frac{\tilde{\xi}_{r,\tau}}{\sqrt{\rho}},
\end{equation}
where $\tilde{\xi}_{r,\tau}$ is defined in (\ref{defideltat}). Adding (\ref{Nov_24_16}) in Lemma~\ref{ostlemq} and (\ref{ostineqQ}) in Lemma~\ref{ostlemQ} and using $(iii)$ in Lemma~\ref{lemlbQ} and (\ref{Dec_2_13}), we get for any $t\geq 2,$
\begin{align}\label{ostineq}
&\F(\bq^{t+1})+\G(\bQ^{t+1})-\<\Lambda^t,\A(\bq^{t+1})+\B(\bQ^{t+1})\>+\rho B_\phi\( -\A(\bq^{t+1}), \B(\bQ^{t+1}) \) 
\notag \\
&
+\rho B_\phi\( -\A(\bq^t),-\A(\bq^{t+1}) \)+\rho B_\phi\( \B(\bQ^{t+1}),\B(\bQ^t) \)+B_\G\( \bQ^t,\bQ^{t+1} \) \notag \\
\leq&\;\; \F(\bq^t)+\G(\bQ^t)-\<\Lambda^t,\A(\bq^t)+\B(\bQ^t)\>+\rho B_\phi\( -\A(\bq^t), \B(\bQ^t) \) \notag \\
& +4\sqrt{\rho}\tilde{\xi}_{r,\tau}\| \A(\bq^{t+1})+\B(\bQ^{t+1}) \|^2/\tau^2+4\sqrt{\rho}\tilde{\xi}_{r,\tau}\| \B(\bQ^{t+1})-\B(\bQ^t) \|^2/\tau^2,
\end{align}
where we have ignored the nonnegative term $B_\psi(\bq^{t+1},\bq^t)$
on the left-hand-side of the inequality.
From step 3 in Algorithm~\ref{alg:BADMMN}, we have that
\begin{align}\label{Dec_2_14}
&\<\Lambda^t-\Lambda^{t+1},\A(\bq^{t+1})+\B(\bQ^{t+1})\>
 \notag \\
&
=\rho\< \nabla \phi(\B(\bQ^{t+1}))-\nabla \phi(-\A(\bq^{t+1})),\B(\bQ^{t+1})-(-\A(\bq^{t+1})) \>  \notag \\
&=\rho B_\phi\(\B(\bQ^{t+1}),-\A(\bq^{t+1})\)+\rho B_\phi\( -\A(\bq^{t+1}),\B(\bQ^{t+1}) \).
\end{align}
Combine this and (\ref{boundd}) in Lemma~\ref{boundlm}, we have that for any $t\geq 2,$
\begin{align}\label{Dec_2_15}
&\<\Lambda^t-\Lambda^{t+1},\A(\bq^{t+1})+\B(\bQ^{t+1})\>=2\<\Lambda^t-\Lambda^{t+1},\A(\bq^{t+1})+\B(\bQ^{t+1})\>\notag \\
&-\<\Lambda^t-\Lambda^{t+1},\A(\bq^{t+1})+\B(\bQ^{t+1})\> \notag \\
& \leq \frac{2\| \nabla \G(\bQ^{t+1})-\nabla\G(\bQ^t) \|^2}{\gamma_{r}\rho}-\rho B_\phi\(\B(\bQ^{t+1}),-\A(\bq^{t+1})\)-\rho B_\phi\( -\A(\bq^{t+1}),\B(\bQ^{t+1}) \)\notag \\
&\leq \frac{32r^4\exp(8M)\| \bQ^{t+1}-\bQ^t \|^2}{\gamma_{r}\rho \tau^4} -\rho \|\A(\bq^{t+1})+\B(\bQ^{t+1})\|^2,
\end{align}
where the last inequality comes from $(ii)$ in Lemma~\ref{lemlbQ} and $\B_\phi(x,y)\geq \|x-y\|^2/2.$ From the condition of $\rho$ in (\ref{realranbeta}), we have the following inequalities:
\begin{eqnarray}\label{Dec_2_16}
&&B_\G(\bQ^t,\bQ^{t+1})-\frac{32r^4\exp(8M)\| \bQ^{t+1}-\bQ^t \|^2}{\gamma_{r}\rho \tau^4} \;\geq\; \frac{\| \bQ^{t+1}-\bQ^t \|^2}{4},
\\
\label{Dec_2_17}
&&\rho B_\phi\( \B(\bQ^{t+1}),\B(\bQ^t) \)-\frac{4\sqrt{\rho}\tilde{\xi}_{r,\tau}\| \B(\bQ^{t+1})-\B(\bQ^t) \|^2}{\tau^2} \;\geq\; 0,
\\
&&\rho \|\A(\bq^{t+1})+\B(\bQ^{t+1})\|^2-\frac{4\sqrt{\rho}\tilde{\xi}_{r,\tau}\| \A(\bq^{t+1})+\B(\bQ^{t+1}) \|^2}{\tau^2}
\notag \\
&&
\geq \frac{\rho \|\A(\bq^{t+1})+\B(\bQ^{t+1})\|^2}{2}.
\label{Dec_2_18}
\end{eqnarray}
Adding (\ref{ostineq}), (\ref{Dec_2_15}) and applying (\ref{Dec_2_16}), (\ref{Dec_2_17}) and (\ref{Dec_2_18}), we get
\begin{align}\label{ostineq1}
&\F(\bq^{t+1})+\G(\bQ^{t+1})-\<\Ph(\Lambda^{t+1}),\A(\bq^{t+1})+\B(\bQ^{t+1})\>+\rho B_\phi\( -\A(\bq^{t+1}), \B(\bQ^{t+1}) \) \notag \\
&+\rho B_\phi\( -\A(\bq^t),-\A(\bq^{t+1}) \)+\frac{\rho \|\A(\bq^{t+1})+\B(\bQ^{t+1})\|^2}{2}+\frac{\| \bQ^{t+1}-\bQ^t \|^2}{4} \notag \\
&\leq \F(\bq^t)+\G(\bQ^t)-\<\Ph(\Lambda^t),\A(\bq^t)+\B(\bQ^t)\>+\rho B_\phi\( -\A(\bq^t), \B(\bQ^t) \).
\end{align}

\medskip

\noindent{\bf Step 2.} $(\bq^t,\bQ^t,\Ph(\Lambda^t))$ is bounded and every limit point is a KKT point of (\ref{BVPt}).

\medskip

Define Bregman augmented Lagrangian function $\Phi: \H_1\times \H_2\times \H_3\rightarrow \R$ such that for any $\bq\in \H_1,$ $\bQ\in \H_2$ and $\Lambda\in \H_3$
\begin{align}\label{defiPhi}
\Phi\(\bq,\bQ,\Lambda\):=\F(\bq)+\delta_{\X_\tau}(\bq)+\G(\bQ)+\delta_{\Y}(\bQ)\notag \\
-\<\Lambda,\A(\bq)+\B(\bQ)\>+\rho B_\phi\( -\A(\bq), \B(\bQ) \).
\end{align} 
From the boundedness of $\Ph(\Lambda^t)$ ($(i)$ in Lemma~\ref{lemlbQ}), $\X$ and $\Y,$ we have that $\Phi\(\bq^t,\bQ^t,\Ph(\Lambda^t)\)$ is lower bounded. From (\ref{ostineq1}), we have that for any $t\geq 2,$
\begin{align}\label{ostineq2}
&\Phi\(\bq^{t+1},\bQ^{t+1},\Ph(\Lambda^{t+1})\)+\rho B_\phi\( -\A(\bq^t),-\A(\bq^{t+1}) \)+\frac{\rho \|\A(\bq^{t+1})+\B(\bQ^{t+1})\|^2}{2}\notag \\
&+\frac{\| \bQ^{t+1}-\bQ^t \|^2}{4}\leq \Phi\(\bq^t,\bQ^t,\Ph(\Lambda^t)\),
\end{align}
which, together with the injective property of $\A(\cdot)$ implies that 
\begin{align}\label{limits}
\lim\limits_{t\rightarrow \infty} \| \bq^t-\bq^{t+1} \|=0,\ \lim\limits_{t\rightarrow \infty}\|\A(\bq^{t+1})+\B(\bQ^{t+1})\|=0,\ \lim\limits_{t\rightarrow \infty} \| \bQ^{t+1}-\bQ^t \|=0.
\end{align}
From (\ref{KKTq}), (\ref{conlm}), 
we have that
\begin{eqnarray}\label{KKTq1}
{\bf 0} &\in & \nabla \F\(\bq^{t+1}\)+\partial\delta_{\X_\tau}(\bq^{t+1})-\A^*(\Lambda^{t+1})+\rho\A^*\( \nabla \phi(\B(\bQ^{t}))-\nabla \phi(\B(\bQ^{t+1}))  \)
\notag \\
&&+\nabla \psi(\bq^{t+1})-\nabla \psi(\bq^t), 
\\
 {\bf 0} & \in & \nabla \G(\bQ^{t+1})+\partial \delta_{ \Y }(\bQ^{t+1})-\B^*(\Lambda^{t+1}).
\label{KKTQ1}
\end{eqnarray}
Recall the definition of affine hull in (\ref{affhull}). Because $\X_\tau\subset \X\subset {\rm aff}(\X),$ we have that 
\begin{equation}\label{Nov_29_12}
\({\rm aff}(\X-\X)\)^\perp=\partial \delta_{{\rm aff}(\X)}(\bq^{t+1})\subset \partial\delta_{\X}(\bq^{t+1})\subset \partial\delta_{\X_\tau}(\bq^{t+1}).
\end{equation}
Therefore, (\ref{KKTq1}) is equivalent to the following equation:
\begin{align}\label{Nov_29_13}
{\bf 0} & \in \nabla \F\(\bq^{t+1}\)+\partial\delta_{\X_\tau}(\bq^{t+1})-\PX\(\A^*(\Lambda^{t+1})\)+\rho\A^*\( \nabla \phi(\B(\bQ^{t}))-\nabla \phi(\B(\bQ^{t+1}))  \)\notag \\
&\quad +\nabla \psi(\bq^{t+1})-\nabla \psi(\bq^t).
\end{align}
From (\ref{defiPh}) and (\ref{Nov_29_14}), we have that $\PX\(\A^*(\Lambda^{t+1})\)=\PX\(\A^*\( \Ph(\Lambda^{t+1})\)\).$ Substituting this into (\ref{Nov_29_13}) and using (\ref{Nov_29_12}) again, we get
\begin{align}\label{Nov_29_15}
{\bf 0}& \in \nabla \F\(\bq^{t+1}\)+\partial\delta_{\X_\tau}(\bq^{t+1})-\A^*(\Ph(\Lambda^{t+1}))+\rho\A^*\( \nabla \phi(\B(\bQ^{t}))-\nabla \phi(\B(\bQ^{t+1}))  \)\notag \\
&\quad +\nabla \psi(\bq^{t+1})-\nabla \psi(\bq^t).
\end{align}
Similarly, from Lemma~\ref{afflem}, we get
\begin{equation}\label{Nov_29_16}
{\bf 0}\in \nabla \G(\bQ^{t+1})+\partial \delta_{ \Y }(\bQ^{t+1})-\B^*(\Ph(\Lambda^{t+1})).
\end{equation}
From (\ref{limits}), (\ref{Nov_29_15}), (\ref{Nov_29_16}) and the boundedness of $\Ph(\Lambda^t),$ it is easy to see that the sequence $\(\bq^t,\bQ^t,\Ph(\Lambda^t)\)$ is bounded with every limit point $(\bq^*,\bQ^*,\Lambda^*)$ satisfying the following KKT conditions:
\begin{align}\label{KKTBVPt}
{\bf 0}&\in \nabla \F\(\bq^*\)+\partial\delta_{\X_\tau}(\bq^*)-\A^*( \Lambda^*),
\quad
{\bf 0}\in \nabla \G(\bQ^*)+\partial \delta_{ \Y }(\bQ^*)-\B^*(\Lambda^*),  \notag \\
{\bf 0}&=\A(\bq^*)+\B(\bQ^*),\ \bq^*\in \X_\tau,\ \bQ^*\in \Y.
\end{align}
This completes the proof of Step 2. 

\medskip

\noindent{\bf Step 3.} Preparative work for using the KL-inequality.

\medskip

From Step 3 in Algorithm~\ref{alg:BADMMN} and (ii) in Lemma~\ref{lemlbQ}, we have that $\B(\bQ^{t+1})\geq \frac{\tau}{2}$ and 
\begin{align}\label{Jan_9_1}
&\frac{\rho\| \A(\bq^{t+1})+\B(\bQ^{t+1}) \|^2}{2} \geq \frac{\rho\tau^2}{8} \| \nabla \phi(\B(\bQ^{t+1}))- \nabla \phi(-\A(\bq^{t+1}))\|^2 \notag \\
&\geq \frac{\rho\tau^2}{8} \| \Ph\(\nabla \phi(\B(\bQ^{t+1}))- \nabla \phi(-\A(\bq^{t+1}))\)\|^2=\frac{\tau^2}{8\rho} \| \Ph(\Lambda^{t+1})-\Ph(\Lambda^{t}) \|^2,
\end{align}
where the first inequality comes from the $\frac{2}{\tau}-$Lipschitz continuity of $\log (\cdot)$ in $[\frac{\tau}{2},\infty).$ Substituting (\ref{Jan_9_1}) into (\ref{ostineq2}), we get for any $t\geq 2,$
\begin{align}\label{ostineq3}
&\Phi\(\bq^{t+1},\bQ^{t+1},\Ph(\Lambda^{t+1})\)+\rho B_\phi\( -\A(\bq^t),-\A(\bq^{t+1}) \)+\frac{\tau^2}{8\rho} \| \Ph(\Lambda^{t+1})-\Ph(\Lambda^{t}) \|^2\notag \\
&+\frac{\| \bQ^{t+1}-\bQ^t \|^2}{4}\leq \Phi\(\bq^t,\bQ^t,\Ph(\Lambda^t)\).
\end{align}
From (\ref{Nov_29_15}), we have that 
\begin{align}\label{Jan_9_2}
&\nabla \psi(\bq^t)-\nabla \psi(\bq^{t+1})-\rho \A^*\( \nabla \phi(\B(\bQ^t))-\nabla \phi(\B(\bQ^{t+1})) \)\notag \\
&+\rho \A^*\( -\nabla \phi(-\A(\bq^{t+1}))+\nabla \phi(\B(\bQ^{t+1})) \)\notag \\
&\in \nabla \F(\bq^{t+1})+\partial \delta_{\X_\tau}(\bq^{t+1})-\A^*(\Ph(\Lambda^{t+1}))+\rho \A^*\( -\nabla \phi(-\A(\bq^{t+1}))+\nabla \phi(\B(\bQ^{t+1})) \) \notag \\
&=\partial_{\bq} \Phi\(\bq^{t+1},\bQ^{t+1},\Ph(\Lambda^{t+1})\).
\end{align}
From Step 3 in Algorithm~\ref{alg:BADMMN}, we have that
\begin{align}\label{Jan_9_3}
&\nabla \psi(\bq^t)-\nabla \psi(\bq^{t+1})-\rho \A^*\( \nabla \phi(\B(\bQ^t))-\nabla \phi(\B(\bQ^{t+1})) \)+ \A^*\( \Lambda^t-\Lambda^{t+1} \)\notag \\
& \in \partial_{\bq} \Phi\(\bq^{t+1},\bQ^{t+1},\Ph(\Lambda^{t+1})\).
\end{align}
Similar to the derivation of (\ref{Nov_29_15}), we have that
\begin{align}\label{Jan_9_4}
&\nabla \psi(\bq^t)-\nabla \psi(\bq^{t+1})-\rho \A^*\( \nabla \phi(\B(\bQ^t))-\nabla \phi(\B(\bQ^{t+1})) \)+ \A^*\( \Ph(\Lambda^t)-\Ph(\Lambda^{t+1}) \)\notag \\
&\in\partial_{\bq} \Phi\(\bq^{t+1},\bQ^{t+1},\Ph(\Lambda^{t+1})\).
\end{align}
From (\ref{Nov_29_16}), we have that
\begin{align}\label{Jan_9_5}
&-\rho\cdot\B^*\( (-\A(\bq^{t+1})-\B(\bQ^{t+1}))./\B(\bQ^{t+1}) \)\in \nabla \G(\bQ^{t+1})+\partial \delta_\Y(\bQ^{t+1})-\B^*(\Ph(\Lambda^{t+1})) \notag \\
&-\rho\cdot\B^*\( (-\A(\bq^{t+1})-\B(\bQ^{t+1}))./\B(\bQ^{t+1}) \)\in \partial_{\bQ} \Phi\(\bq^{t+1},\bQ^{t+1},\Ph(\Lambda^{t+1})\),
\end{align}
where ``$./$" means element-wise division. From (\ref{defiPhi}), we have that
\begin{align}\label{Jan_9_6}
-\A(\bq^{t+1})-\B(\bQ^{t+1})=\nabla_\Lambda \Phi\(\bq^{t+1},\bQ^{t+1},\Ph(\Lambda^{t+1})\). 
\end{align}
Reusing the definition of $\hat{\phi}$ in (\ref{Nov_24_23}), because it is strongly convex with parameter 1, we have that for any $x,y\in \B(\ri{\Y})$,
\begin{equation}\label{Jan_10_1}
\|x-y\|\leq \| \hat{\P}(\nabla \phi(x))-\hat{\P}(\nabla \phi(y)) \|.
\end{equation}
Substituting $x=\B(\bQ^{t+1}),$ $y=-\A(\bq^{t+1})$ into (\ref{Jan_10_1}) and use Step 3 of Algorithm~\ref{alg:BADMMN}, we get
\begin{align}\label{Jan_10_2}
&\|\A(\bq^{t+1})+\B(\bQ^{t+1})\|\leq \| \hat{\P}(\nabla \phi(\B(\bQ^{t+1})))-\hat{\P}(\nabla \phi(-\A(\bq^{t+1}))) \|
\notag \\
&
=\frac{1}{\rho}\| \Ph(\Lambda^t)-\Ph(\Lambda^{t+1}) \|.
\end{align}
From (\ref{ostineq3}), we have that there exists $\beta_1>0$ such that for any $t\geq 2,$
\begin{align}\label{ostineq4}
&\Phi\(\bq^{t+1},\bQ^{t+1},\Ph(\Lambda^{t+1})\)+\beta_1 \| \bq^t-\bq^{t+1} \|^2+\beta_1 \| \Ph(\Lambda^{t+1})-\Ph(\Lambda^{t}) \|^2+\beta_1 \| \bQ^{t+1}-\bQ^t \|^2 \notag \\
&\leq \Phi\(\bq^t,\bQ^t,\Ph(\Lambda^t)\).
\end{align}
From (\ref{Jan_9_4}), (\ref{Jan_9_5}), (\ref{Jan_9_6}) and (\ref{Jan_10_2}), we have that there exists $\beta_2>0$ such that 
\begin{align}\label{bdgrad}
&{\rm dist}\({\bf 0}, \partial \Phi\(\bq^{t+1},\bQ^{t+1},\Ph(\Lambda^{t+1})\) \)\notag \\
&\leq \beta_2 \( \|\bq^t-\bq^{t+1}\|+\|\bQ^t-\bQ^{t+1}\|+\|\Ph(\Lambda^{t+1})-\Ph(\Lambda^{t})\| \).
\end{align}
Define 
\begin{equation}\label{defiphist}
\Phi^*:=\lim\limits_{t\rightarrow \infty} \Phi\(\bq^t,\bQ^t,\Ph(\Lambda^t)\). 
\end{equation}
Define the following set
\begin{equation}\label{defiU}
\U:=\left\{ (\bq,\bQ,\Lambda)\in \X_\tau\times \Y\times \H_3:\bQ\geq \frac{\tau^2}{4r^2\exp[4M]},\|\Lambda\|\leq \xi_{r,\tau},\Phi(\bq,\bQ,\Lambda)=\Phi^* \right\},
\end{equation}
where $\xi_{r,\tau}$ is defined in (\ref{defidelta}). We know that $\U$ is compact and every limit point of $(\bq^t,\bQ^t,\Ph(\Lambda^t))$ is inside $\U.$ Because $\Phi$ is constructed from linear functions, logarithm functions and $\X_\tau,$ $\Y$ are convex polyhedra, we know that $\Phi$ is a tame function \cite{wilkie1996model,van1998tame} that satisfies the KL property mentioned in Subsection~\ref{Subsec:nota} (see \cite[Section 4]{attouch2010proximal}). Because $\U$ is compact, from Lemma 6 in \cite{bolte2014proximal}, $\Phi$ satisfies the uniformed KL property, that is, there exists $\epsilon,\eta>0$ and a continuous concave function $\varphi: [0,\eta)\rightarrow \R_+$ such that for all $\(\bar{\bq},\bar{\bQ},\bar{\Lambda}\)\in \U$ 
with $\Phi(\bar{\bq},\bar{\bQ},\bar{\Lambda}) = \Phi^*$
and all $(\bq,\bQ,\Lambda)$ in the following set:
\begin{align}\label{interset}
&\U_{\epsilon,\eta}:=\Big\{(\bq,\bQ,\Lambda)\in \X\times \Y\times \H_3: \notag \\
&\qquad \qquad {\rm dist}(\U,(\bq,\bQ,\Lambda))<\epsilon,\Phi^*<\Phi(\bq,\bQ,\Lambda)<\Phi^*+\eta \Big\},
\end{align}
one has,
\begin{equation}\label{Jan_10_3}
\varphi'(\Phi(\bq,\bQ,\Lambda)-\Phi^*){\rm dist}({\bf 0},\partial \Phi(\bq,\bQ,\Lambda))\geq 1.
\end{equation}

\medskip

\noindent{\bf Step 4.} The sequence $\(\bq^t,\bQ^t,\Ph(\Lambda^t)\)$ converges to a KKT solution of (\ref{BVPt}).

\medskip

If for some $t_1\in \mathbb{N}^+,$ $\Phi\big(\bq^t,\bQ^t,\Ph(\Lambda^t)\big)=\Phi^*,$ from (\ref{ostineq4}), we have that $\big(\bq^t,\bQ^t,\Ph(\Lambda^t)\big)=\big(\bq^{t_1},\bQ^{t_1},\Ph(\Lambda^{t_1})\big)$ for any $t\geq t_1.$ Therefore, from Step 2, we know that the sequence $\big(\bq^t,\bQ^t,\Ph(\Lambda^t)\big)$ converges to a KKT point of (\ref{BVPt}). 

Now, suppose $\Phi\big(\bq^t,\bQ^t,\Ph(\Lambda^t)\big)>\Phi^*$ for any $t\in \mathbb{N}^+.$ Because every limit point of $\big(\bq^t,\bQ^t,\Ph(\Lambda^t)\big)$ is inside $\U,$ from the continuity of $\Phi,$ we know that there exists $N\geq 3$ such that for any $t\geq N,$ $\big(\bq^t,\bQ^t,\Ph(\Lambda^t)\big)\in \U_{\epsilon,\eta}$. From (\ref{bdgrad}) and (\ref{Jan_10_3}), we have that for any $t\geq N,$
\begin{align}\label{Jan_10_4}
&\frac{1}{\varphi'(\Phi(\bq^{t},\bQ^{t},\Ph(\Lambda^{t}))-\Phi^*)}\leq {\rm dist}({\bf 0},\partial \Phi(\bq^{t},\bQ^{t},\Ph(\Lambda^{t})))\notag \\
&\leq \beta_2 \( \|\bq^{t-1}-\bq^{t}\|+\|\bQ^{t-1}-\bQ^{t}\|+\|\Ph(\Lambda^{t})-\Ph(\Lambda^{t-1})\| \).
\end{align}
From (\ref{ostineq4}), (\ref{Jan_10_4}) and the concavity of $\varphi$, we have that for any $t\geq N,$
\begin{align}\label{Jan_10_5}
&\beta_1 \| \bq^t-\bq^{t+1} \|^2+\beta_1 \| \Ph(\Lambda^{t+1})-\Ph(\Lambda^{t}) \|^2+\beta_1 \| \bQ^{t+1}-\bQ^t \|^2\notag \\
&\leq \(\Phi(\bq^{t},\bQ^{t},\Ph(\Lambda^{t}))-\Phi^*\)-\(\Phi(\bq^{t+1},\bQ^{t+1},\Ph(\Lambda^{t+1}))-\Phi^*\)\notag \\
&\leq \frac{\varphi(\Phi(\bq^{t},\bQ^{t},\Ph(\Lambda^{t}))-\Phi^*)-\varphi(\Phi(\bq^{t+1},\bQ^{t+1},\Ph(\Lambda^{t+1}))-\Phi^*)}{\varphi'(\Phi(\bq^{t},\bQ^{t},\Ph(\Lambda^{t}))-\Phi^*)}\notag \\
&\leq \beta_2 \( \|\bq^{t-1}-\bq^{t}\|+\|\bQ^{t-1}-\bQ^{t}\|+\|\Ph(\Lambda^{t})-\Ph(\Lambda^{t-1})\| \)(\varphi_t-\varphi_{t+1}),
\end{align}
where $\varphi_t:=\varphi(\Phi(\bq^{t},\bQ^{t},\Ph(\Lambda^{t}))-\Phi^*)>0.$ From Cauchy-Schwarz inequality, we get
\begin{align}\label{Jan_10_6}
&\frac{1}{\sqrt{3}} \( \| \bq^t-\bq^{t+1} \|+\| \Ph(\Lambda^{t+1})-\Ph(\Lambda^{t}) \|+\| \bQ^{t+1}-\bQ^t \| \)\notag \\
&\leq \sqrt{\| \bq^t-\bq^{t+1} \|^2+\| \Ph(\Lambda^{t+1})-\Ph(\Lambda^{t}) \|^2+\| \bQ^{t+1}-\bQ^t \|^2}.
\end{align}
Applying (\ref{Jan_10_6}) in (\ref{Jan_10_5}), we get
\begin{align}\label{Jan_10_7}
&\| \bq^t-\bq^{t+1} \|+\| \Ph(\Lambda^{t+1})-\Ph(\Lambda^{t}) \|+\| \bQ^{t+1}-\bQ^t \|\notag \\
&\leq \sqrt{ \|\bq^{t-1}-\bq^{t}\|+\|\bQ^{t-1}-\bQ^{t}\|+\|\Ph(\Lambda^{t})-\Ph(\Lambda^{t-1})\| }\cdot \sqrt{\frac{3\beta_2(\varphi_t-\varphi_{t+1})}{\beta_1}}.
\end{align}
Applying the mean value inequality to the right hand side of (\ref{Jan_10_7}), we get 
\begin{align}\label{Jan_10_8}
&\| \bq^t-\bq^{t+1} \|+\| \Ph(\Lambda^{t+1})-\Ph(\Lambda^{t}) \|+\| \bQ^{t+1}-\bQ^t \|\notag \\
&\leq \frac{1}{2} \(\|\bq^{t-1}-\bq^{t}\|+\|\bQ^{t-1}-\bQ^{t}\|+\|\Ph(\Lambda^{t})-\Ph(\Lambda^{t-1})\|\) + \frac{3\beta_2(\varphi_t-\varphi_{t+1})}{2\beta_1}.
\end{align}
Taking summation of the above inequality from $N$ to $N_1>N,$ we have that
\begin{align}\label{Jan_10_9}
&\sum_{t=N}^{N_1}\frac{1}{2}\(\| \bq^t-\bq^{t+1} \|+\| \Ph(\Lambda^{t+1})-\Ph(\Lambda^{t}) \|+\| \bQ^{t+1}-\bQ^t \|\)\notag \\
&\leq \|\bq^{N-1}-\bq^{N}\|+\|\bQ^{N-1}-\bQ^{N}\|+\|\Ph(\Lambda^{N})-\Ph(\Lambda^{N-1})\| + \frac{3\beta_2(\varphi_N-\varphi_{N_1+1})}{2\beta_1}.
\end{align}
Because the right hand side of (\ref{Jan_10_9}) is bounded, we know that $\big(\bq^t,\bQ^t,\Ph(\Lambda^t)\big)$ is a Cauchy sequence. Thus, from Step 2, it converges to a KKT point of (\ref{BVPt}).

\end{proof}


\section{Implementation details}\label{Sec:impl}
In this section, we discuss the implementation details of Algorithm~\ref{alg:BADMMN}. While the framework outlined in Algorithm~\ref{alg:BADMMN} and the conditions in Theorem~\ref{convthm} are primarily designed for theoretical convergence analysis, certain practical adjustments are necessary to enhance computational efficiency. These adjustments may not strictly adhere to the framework or conditions specified in the theorem. Although this creates a gap between theory and practice, we believe that strictly following the theoretical framework at the cost of significant efficiency loss would be overly conservative. Instead, our focus is on balancing theoretical rigor with practical performance.

\subsection{Solving the subproblem in Step 1}

The subproblem in Step 1 of Algorithm~\ref{alg:BADMMN} can be written as follows for any $k\in [n]:$
\begin{align}\label{updateq}
&q^{t+1}_k:=\arg\min\Big\{ \<c_k-(d_k-1)\log q_k^t,q_k\>-\sum_{kj\in \E}\<\lambda_{kj}^t,-q_k\>-\sum_{ik\in \E}\< \mu^t_{ik},-q_k \>\\
&+\rho\sum_{kj\in \E}\<\log q_k-\log (Q_{kj}^t{\bf 1}_r),q_k\> +\rho\sum_{ik\in \E} \< \log q_k-\log ({Q_{ik}^t}^\top {\bf 1}_r),q_k \>:\ {\bf 1}^\top_r q_k=1,\ q_k\geq \tau \Big\}. \notag 
\end{align}
After some simplications, we get the following problem, 
\begin{equation}\label{updateq1}
q^{t+1}_k:=\arg\min\Big\{ \<\hat{c}_k^t,q_k\>+\rho\cdot d_k\< q_k,\log q_k \>:\ {\bf 1}^\top_r q_k=1,\ q_k\geq \tau\Big\},
\end{equation}
where $\hat{c}_k^t$ is defined as
\begin{equation}\label{defickt}
\hat{c}_k^t:=c_k-(d_k-1)\log q_k^t+\sum_{kj\in \E}\(\lambda_{kj}^t-\rho\log (Q_{kj}^t{\bf 1}_r)\)+\sum_{ik\in \E} \(\mu^t_{ik}-\rho\log ({Q_{ik}^t}^\top {\bf 1}_r)\).
\end{equation}
Problem (\ref{updateq1}) is a strongly convex optimization problem with simplex-type constraints, and it is independent for each $k \in [n]$. One can apply the semismooth Newton method \cite{qi2010semismooth} to effectively solve its dual problem iteratively.

In Algorithm~\ref{alg:BADMMN}, the constraint $\bq \geq \tau $ is included primarily for convergence analysis, as it prevents the iterates from approaching zero. From Proposition~\ref{eqprop}, this constraint becomes inactive at stationary points when $\tau$ is sufficiently small. In our implementation, rather than choosing a very small $\tau > 0$, we simplify the problem by ignoring the constraint $\bq \geq \tau$ and directly solving:
\begin{equation}\label{updateq2}
q^{t+1}_k:=\arg\min\Big\{ \<\hat{c}_k^t,q_k\>+\rho\cdot d_k\< q_k,\log q_k \>:\ {\bf 1}^\top_r q_k=1,\ q_k\geq 0\Big\},
\end{equation}
which has the following closed-form solution
\begin{equation}\label{updateq3}
q^{t+1}_k:=\exp\left[-\hat{c}^t_k/(\rho d_k)\right]/\( {\bf 1}_r^\top \exp\left[-\hat{c}^t_k/(\rho d_k)\right] \).
\end{equation}
Our numerical experiments indicate that the iterates $(\bq^k)_{k\in [n]}$ of the Bregman ADMM are consistently bounded away from zero, rendering the constraint $\bq \geq \tau$ unnecessary in practice. This simplification significantly enhances computational efficiency without compromising the algorithm's robustness.

\subsection{Solving the subproblem in Step 2}

The subproblem in Step 2 of Algorithm~\ref{alg:BADMMN} can be formulated as follows:
\begin{align}\label{updateQ}
&Q^{t+1}_{ij} := \arg\min_{Q_{ij}\geq 0} \Big\{ \<C_{ij}, Q_{ij}\> + \<\log Q_{ij}, Q_{ij}\> - \<\lambda_{ij}^t, Q_{ij} {\bf 1}_r\> - \<\mu^t_{ij}, Q_{ij}^\top {\bf 1}_r\>  \\ \notag
&+ \rho \<\log (Q_{ij}{\bf 1}_r) - \log q_i^{t+1}, Q_{ij}{\bf 1}_r\> + \rho \<\log (Q_{ij}^\top {\bf 1}_r) - \log q_j^{t+1}, Q_{ij}^\top {\bf 1}_r\> : \<Q_{ij}, {\bf 1}_{r \times r}\> = 1 \Big\}.
\end{align}
After simplifications, this reduces to:
\begin{align}\label{updateQ1}
&Q^{t+1}_{ij} := \arg\min_{Q_{ij}\geq 0} \Big\{ \<\widehat{C}_{ij}^t, Q_{ij}\> + \<\log Q_{ij}, Q_{ij}\> \notag \\
&+ \rho \<\log (Q_{ij}{\bf 1}_r), Q_{ij}{\bf 1}_r\> + \rho \<\log (Q_{ij}^\top {\bf 1}_r), Q_{ij}^\top {\bf 1}_r\> : \<Q_{ij}, {\bf 1}_{r \times r}\> = 1 \Big\},
\end{align}
where $\widehat{C}_{ij}^t$ is defined as:
\begin{equation}\label{defiCijt}
\widehat{C}_{ij}^t := C_{ij} - \(\lambda_{ij}^t + \rho \log q_i^{t+1}\) {\bf 1}_r^\top - {\bf 1}_r \(\mu^t_{ij} + \rho \log q_j^{t+1}\)^\top.
\end{equation}
Problem (\ref{updateQ1}) is a strongly convex optimization problem with an entropy regularizer and a simple affine constraint. It is essentially an entropy optimal transport (OT) problem, where the marginal constraints are replaced by KL-divergence penalties. Both Newton and gradient descent methods can be applied to solve its dual problem efficiently. Notably, in many Bethe variational problems derived from graphical models, $r$ is small, making the computational cost of the Newton method small. Furthermore, since the subproblem (\ref{updateQ1}) is independent for each $ij \in \E$, all such problems can be solved in parallel, unlike the subproblems in CCCP, which are not separable and require block coordinate minimization.

Although Newton or gradient descent methods ensure convergence, they are iterative, resulting in a double-loop structure for the overall algorithm. To create a single-loop variant of Algorithm~\ref{alg:BADMMN}, we add a proximal term to linearize the penalty function in (\ref{updateQ}). Using the strong subadditivity (SSA) of the entropy function \cite{lieb2002some}, we define the following convex function $\varphi : \Y \to \R$ such that for any $\bQ\in \Y,$
\begin{equation}\label{defivarphi}
\varphi(\bQ) := \sum_{ij \in \E} \Big( 2 \<\log Q_{ij}, Q_{ij}\> - \<\log (Q_{ij}{\bf 1}_r), Q_{ij}{\bf 1}_r\> - \<\log (Q_{ij}^\top{\bf 1}_r), Q_{ij}^\top{\bf 1}_r\> \Big).
\end{equation}
By adding the proximal term $\rho B_\varphi(\bQ, \bQ^t)$ in Step 2 of Algorithm~\ref{alg:BADMMN}, the corresponding subproblem becomes:
\begin{equation}\label{updateQ2}
Q^{t+1}_{ij} := \arg\min_{Q_{ij}\geq 0} \left\{ \<\widetilde{C}_{ij}^t, Q_{ij}\> + (1 + 2\rho) \<\log Q_{ij}, Q_{ij}\> : \<Q_{ij}, {\bf 1}_{r \times r}\> = 1 \right\},
\end{equation}
where $\widetilde{C}_{ij}^t$ is defined as:
\begin{equation}\label{defiCijth}
\widetilde{C}_{ij}^t := \widehat{C}_{ij}^t - \rho \Big( 2\log Q_{ij}^t - \log (Q_{ij}^t {\bf 1}_r) {\bf 1}_r^\top - {\bf 1}_r \log ({Q_{ij}^t}^\top {\bf 1}_r)^\top \Big).
\end{equation}
This modified problem (\ref{updateQ2}) admits the following closed-form solution:
\begin{equation}\label{updateQ3}
Q^{t+1}_{ij} := \exp\left[-\widetilde{C}_{ij}^t / (1 + 2\rho)\right]/ \< \exp\left[-\widetilde{C}_{ij}^t / (1 + 2\rho)\right], {\bf 1}_{r \times r}\> .
\end{equation}

\begin{rem}\label{linearrem}
While we have established the convergence of Algorithm~\ref{alg:BADMMN} with the proximal term $B_\psi(\bq, \bq^t)$ in Step 1, the convergence of Algorithm~\ref{alg:BADMMN} when adding the proximal term $\rho B_\varphi(\bQ, \bQ^t)$ in Step 2 remains unproven. This is because the penalty parameter $\rho$ appears in the proximal term, and the resulting error in (\ref{conlm}) cannot be controlled by simply increasing $\rho$. To our knowledge, the convergence guarantee for non-convex ADMM with such a proximal term,
even in the Euclidean case, remains an open problem. Despite this, our numerical experiments show that the linearized Bregman ADMM with the additional
proximal term $\rho B_\varphi(\bQ,\bQ^t)$ performs significantly better than the original version, which requires an iterative solver to solve the second subproblem.
\end{rem}

\subsection{Termination criterion}\label{Subsec:KKT}

In order to terminate the algorithm, we need an measurement of the stationarity. Suppose $(\bq,\bQ,\Lambda)\in \ri{\X}\times \ri{\Y}\times \H_3$ is a primal-dual solution of (\ref{BVP}), the KKT conditions are as follows:
\begin{equation}\label{KKTprimalqQ}
\forall ij\in \E,\ Q_{ij} {\bf 1}_r=q_i,\ Q_{ij}^\top {\bf 1}_r =q_j,
\end{equation}
\begin{equation}\label{KKTdualQ}
\forall ij\in \E,\ \exists \hat{z}_{ij}\in \R,\, {\rm s.t.}\ C_{ij}+\log Q_{ij}-\lambda_{ij}{\bf 1}_r^\top-{\bf 1}_r\mu_{ij}^\top+\hat{z}_{ij} {\bf 1}_{r\times r}={\bf 0}_{r\times r},
\end{equation}
\begin{equation}\label{KKTdualq}
\forall k\in [n],\ \exists z_k\in \R,\ {\rm s.t.}\ c_k-(d_k-1)\log q_k+\sum_{kj\in \E}\lambda_{kj}+\sum_{ik\in \E} \mu_{ik}+z_k {\bf 1}_r={\bf 0}_r,
\end{equation}
where $z_k$ and $\hat{z}_{ij}$ are the Lagrangian multipliers for the normalization constraints ${\bf 1}_r^\top q_k=1$ and $\<Q_{ij},{\bf 1}_{r\times r}\>=1$, respectively. We ignore the normalization constraints and nonnegative constraints in (\ref{KKTprimalqQ}) because they are strictly satisfied in Algorithm~\ref{alg:BADMMN}.  We use the following KL-divergence to measure the violation of primal feasibility (\ref{KKTprimalqQ})
\begin{equation}\label{KKTresp}
{\rm Resp}(\bq,\bQ):=\sum_{ij\in \E} \(\< \log q_i-\log (Q_{ij}{\bf 1}_r),q_i \>+\< \log q_j-\log (Q_{ij}^\top {\bf 1}_r),q_j \>\).
\end{equation}
When $\lambda_{ij},\mu_{ij}$ are given, (\ref{KKTdualQ}) has the following unique solution as a probability density vector:
\begin{equation}\label{solKKTQ}
\widehat{Q}_{ij}:= \exp[-C_{ij}+\lambda_{ij}{\bf 1}_r^\top+{\bf 1}_r\mu_{ij}^\top]/\<\exp[-C_{ij}+\lambda_{ij}{\bf 1}_r^\top+{\bf 1}_r\mu_{ij}^\top],{\bf 1}_{r\times r}\>.
\end{equation}
We use the following KL-divergence to measure the violation of (\ref{KKTdualQ}):
\begin{equation}\label{KKTresdQ}
{\rm Resd}_{\bQ}:=\sum_{ij\in \E}  \< \log Q_{ij}-\log \widehat{Q}_{ij},Q_{ij} \>.
\end{equation}
The verification of (\ref{KKTdualq}) is slightly more complicated. For any $k\in [n],$ if $d_k>1,$ then similar to (\ref{solKKTQ}), (\ref{KKTdualQ}) has the following unique solution as a probability density vector:
\begin{equation}\label{solKKTq}
\hat{q}_k:= \exp\left[\frac{c_k+\sum_{kj\in \E}\lambda_{kj}+\sum_{ik\in \E} \mu_{ik}}{(d_k-1)}\right]/\<\exp\left[\frac{c_k+\sum_{kj\in \E}\lambda_{kj}+\sum_{ik\in \E} \mu_{ik}}{(d_k-1)}\right],{\bf 1}_{r}\>
\end{equation}
We use the KL-divergence $\< \log q_k-\log \hat{q}_k,q_k \>$ to measure the violation of (\ref{KKTdualq}) for $q_k.$ When $d_k=1,$ because $d_k-1=0,$ we don't have the $\log q_k$ term in (\ref{KKTdualq}). In this case, we use the following residue
\begin{equation}\label{solKKTqresd=1}
\Big\|(I_r-{\bf 1}_{r\times r}/r)\Big(c_k+\sum_{kj\in \E}\lambda_{kj}+\sum_{ik\in \E} \lambda_{ik}\mu_{ik}\Big)\Big\|/(1+\|c_k\|).
\end{equation}
Combining these two cases, we use the following term to measure the violation of stationarity of $\bq$: 
\begin{align}\label{KKTresdq}
&{\rm Resd}_{\bq}:=\sum_{k\in [n]\ d_k>1}\< \log q_k-\log \hat{q}_k,q_k \>\notag \\
&\qquad +\sum_{k\in [n],\ d_k=1} \Big\|(I_r-{\bf 1}_{r\times r}/r)\Big(c_k+\sum_{kj\in \E}\lambda_{kj}+\sum_{ik\in \E} \lambda_{ik}\mu_{ik}\Big)\Big\|/(1+\|c_k\|).
\end{align}
With (\ref{KKTresdQ}) and (\ref{KKTresdq}), we define the dual KKT residual as follows:
\begin{equation}\label{KKTresd}
{\rm Resd}(\bq,\bQ):={\rm Resd}_{\bQ}+{\rm Resd}_{\bq}.
\end{equation}
In our implementation, we use the following KKT residual 
\begin{equation}\label{KKTres}
{\rm Res}(\bq,\bQ):=\max\{{\rm Resp}(\bq,\bQ),{\rm Resd}(\bq,\bQ)\},
\end{equation}
to measure the stationarity. When it is smaller than the required tolerance, we terminate our algorithm.

\subsection{Penalty parameter}\label{subsec:updatebeta}

In our theoretical analysis of Algorithm~\ref{alg:BADMMN}, we fix the penalty parameter $\rho$ to be some sufficiently large number. However, in practice, the efficiency of ADMM is sensitive to the penalty parameter and a commonly used strategy is to adaptively update it to balance the primal and dual KKT residuals  \cite{lin2011linearized,tang2024self}. In our implementation, we update $\rho^t>0$ as follows:
\begin{equation}\label{updatebeta}
\rho^{t+1}:=
\begin{cases}
\max\{\rho^t/1.2,10^{-3}\} & \mbox{if ${\rm Resp}(\bq^{t+1},\bQ^{t+1})<{\rm Resd}(\bq^{t+1},\bQ^{t+1})/5$} 
\\
\max\{1.2\cdot\rho^t,10^{3}\} & 
\mbox{if ${\rm Resp}(\bq^{t+1},\bQ^{t+1})>5\cdot {\rm Resd}(\bq^{t+1},\bQ^{t+1})$}
\\
\rho^t
& \mbox{otherwise.}
\end{cases}
\end{equation}
The updating scheme of the penalty parameter is not unique, we find that the strategy in
(\ref{updatebeta}) works pretty well in numerical experiments. In order to avoid updating the penalty parameter too frequently and making our algorithm more stable, we check the KKT residual and update the penalty parameter once in every ten iterations.

\subsection{Numerical version of Algorithm~\ref{alg:BADMMN}}

With all the computational techniques mentioned in this section, we now propose the numerical version of Algorithm~\ref{alg:BADMMN} as in Algorithm~\ref{alg:BADMMNnu}. 

\begin{algorithm}
	\renewcommand{\algorithmicrequire}{\textbf{Input:}}
	\renewcommand{\algorithmicensure}{\textbf{Output:}}
	\caption{Numerical version of Algorithm~\ref{alg:BADMMN}}
	\label{alg:BADMMNnu}
	\begin{algorithmic}
		\STATE {\bf Initialization}: Choose $\rho^1 > 0$, $\bq^1 \in \ri{\X}, \bQ^1 \in \ri{\Y}, \Lambda^1 = {\bf 0}\in \H_3,$ ${\rm tol}>0.$
		\FOR{$t = 1, 2, \dots, {\rm maxiter}$}
		\STATE 1. $\bq^{t+1} \in \arg\min_{\X} \left\{ \F(\bq) - \< \Lambda^t, \A(\bq) \> + \rho^t B_\phi(-\A(\bq), \B(\bQ^t))+B_\psi(\bq,\bq^t) \right\}$
		\STATE 2. $\bQ^{t+1} \in \arg\min_{\Y} \left\{ \G(\bQ) - \< \Lambda^t, \B(\bQ) \> + \rho^t B_\phi(\B(\bQ), -\A(\bq^{t+1}))+\rho^t B_\varphi(\bQ,\bQ^t) \right\}$
		\STATE 3. $\Lambda^{t+1} := \Lambda^t - \rho^t \( \nabla \phi(\B(\bQ^{t+1})) - \nabla \phi(-\A(\bq^{t+1})) \)$
		\IF{{\rm mod}($t$,10)=1}
			\STATE {4.} Compute ${\rm Resp}(\bq^{t+1},\bQ^{t+1})$, ${\rm Resd}(\bq^{t+1},\bQ^{t+1})$ as in (\ref{KKTresp}) and (\ref{KKTresd})
			\IF{ $\max\{ {\rm Resp}(\bq^{t+1},\bQ^{t+1}),{\rm Resd}(\bq^{t+1},\bQ^{t+1}) \}<{\rm tol}$ }
			\STATE {\bf Break}
			\ENDIF
			\STATE {5.} Update $\rho^{t+1}$ as in (\ref{updatebeta})
		\ELSE
		\STATE {6.} $\rho^{t+1}:=\rho^t$ 
		\ENDIF
		\ENDFOR
	\end{algorithmic}
\end{algorithm}

We summarize the computational techniques used in Algorithm~\ref{alg:BADMMNnu} as follows:

\begin{itemize}
\item It drops the constraint $\bq\geq\tau$ used in Algorithm~\ref{alg:BADMMN} so that the subproblem in Step 1 has a closed form solution (\ref{updateq3}).
\item It adds a proximal term $\rho^tB_\varphi(\bQ,\bQ^t)$ in the subproblem of Step 2 so that it has a closed form solution (\ref{updateQ3}).
\item It uses the KKT residual defined in (\ref{KKTres}) to measure the violation of stationarity.
\item It updates the penalty parameter $\rho^t$ adaptively as in (\ref{updatebeta}).
\end{itemize}

\section{Quantum extension}\label{Sec:QT}

\subsection{Quantum Bethe variational problem}
The quantum Bethe variational problem is as follows:
\begin{align}\label{QBVP}
&\min\Big\{\sum_{ij\in \E}\(\<\hC_{ij},Q_{ij}\>+\< Q_{ij}, \logm (Q_{ij}) \>\)+\sum_{k\in [n]} \(\<\hc_{k},q_{k}\>-(d_k-1)\< q_{k}, \logm (q_{k}) \>\): \notag \\
& \forall ij\in \E,\Trl (Q_{ij})=q_j,\Trr (Q_{ij})=q_i,Q_{ij}\in \Hr^{r^2}_+,\forall k\in [n],q_k\in \Hr^r_+,\Tr(q_k)=1
\Big\}. \tag{QBVP}
\end{align}
In (\ref{QBVP}), $\hC_{ij}\in \Hr^{r^2},$ $\hc_k\in \Hr^r$ are hermitian coefficient matrices. The operator $\logm(\cdot)$ refers to the logarithm of a matrix such that for any matrix $A\in \Hr^p_+$ 
(the set of $p\times p$ hermitian positive semidefinite matrix.) with eigenvalue decomposition $P\dd(d)P^\top,$ $\logm(A)=P\dd(\log(d))P^\top.$ Note that $\logm(A)$ requires $A$ to be positive definite. However, by defining $\<A,\logm (A)\>:=\<d,\log (d)\>,$ $A$ can have zero eigenvalues. In this case, the domain of (\ref{QBVP}) is a compact set. Similarly, we use $\expm(\cdot)$ to denote the matrix exponential such that $\expm(A)=P\dd(\exp(d))P^\top.$ The mappings $\Trl: \Hr^{r^2}\rightarrow \Hr^r$, $\Trr:\Hr^{r^2}\rightarrow \Hr^r$ are trace out operators such that for any $B\in \Hr^{r^2}$ and $i,j\in [r],$
\begin{equation}\label{defitraceoutl}
\Trl(B)(i,j):=\sum_{k=1}^rB(i+(k-1)r,j+(k-1)r),
\end{equation}
\begin{equation}\label{defitraceoutr}
\Trr(B)(i,j):=\sum_{k=1}^rB((i-1)r+k,(j-1)r+k).
\end{equation}
Here for simplicity, we only provide the optimization formulation of (\ref{QBVP}). For more bankground knowledge about quantum mechanics, please refer to \cite{leifer2008quantum,zhao2024quantum}. (\ref{QBVP}) is an extension of (\ref{BVP}) with the following changes

\begin{itemize}
\item[(1)] $q_k\in \R^r_+,$ $Q_{ij}\in \R^{r\times r}_+$ are replaced by $q_k\in \Hr^r_+$ and $Q_{ij}\in \Hr^{r^2}_+$ respectively.
\item[(2)] $\log(\cdot)$ and $\exp(\cdot)$ are replaced by $\logm(\cdot)$ and $\expm(\cdot)$ respectively.
\item[(3)] $Q_{ij}{\bf 1}_r$ and $Q_{ij}^\top {\bf 1}_r$ are replaced with $\Trr(Q_{ij})$ and $\Trl(Q_{ij})$ respectively.
\item[(4)] ${\bf 1}^\top q_k=1$ is replaced by $\Tr(q_k)=1.$ 
\end{itemize}
Note that from the constraint $\Trl(Q_{ij})=q_j,$ we have that $\Tr(Q_{ij})=\Tr(\Trl(Q_{ij}))=\Tr(q_j)=1.$ Thus, the trace of each matrix $Q_{ij}$ is also equals to 1. 

\subsection{Quantum belief propagation}

There are two types of quantum belief propagation (QBP) methods. The approach introduced by Leifer and Poulin in \cite{leifer2008quantum} is a direct extension of the classical belief propagation. However, it imposes stringent requirements on the quantum system and may not converge, even when the graph $G$ is a tree. In this paper, we focus on the QBP method proposed by Zhao et al. in \cite{zhao2024quantum}, which is derived from the KKT conditions of (\ref{QBVP}) and is applicable to a broader class of problems.

For any $ij\in \E,$ let $\lambda_{ij},\mu_{ij} \in \Hr^r$ be the Lagrangian multipliers of the marginal constraint $\Trr (Q_{ij})=q_i,\Trl (Q_{ij})=q_j$, respectively. Define $\Psi_k:=\expm(-\hc_k),$ $\Psi_{ij}:=\expm(-\hC_{ij}).$ Let the messages be $M_{ij}^\lambda:=\expm (\lambda_{ij}),$ $M_{ij}^\mu:=\expm (\mu_{ij}).$ For any $ij\in \E$, QBP updates the messages as follows:
\begin{equation}\label{QBP:updateQ}
Q_{ij}\leftarrow \frac{\expm(-\hC_{ij}+\mu_{ij}\otimes I_r +I_r\otimes \lambda_{ij} )}{\Tr\(\expm(-\hC_{ij}+\mu_{ij}\otimes I_r +I_r\otimes \lambda_{ij} )\)},
\end{equation}
\begin{equation}\label{QBP:updateMij}
M_{ij}^{\mu}\leftarrow \psi_j \odot \Pi_{\alpha j\in \E, \alpha\neq i } \(\Trl\(Q_{\alpha j}\)\odot (M_{\alpha j}^{\mu})^{-1}\)\odot \Pi_{j\rho\in \E } \(\Trr\(Q_{j\rho}\)\odot (M_{j\rho}^{\lambda})^{-1}\),
\end{equation}
\begin{equation}\label{QBP:updateMji}
M_{ij}^{\lambda}\leftarrow \psi_i \odot \Pi_{\alpha i\in \E } \(\Trl\(Q_{\alpha i}\)\odot (M_{\alpha i}^{\mu})^{-1}\)\odot \Pi_{i\rho\in \E,\rho\neq j } \(\Trr\(Q_{i\rho}\)\odot 
(M_{i\rho}^{\lambda})^{-1}\).
\end{equation}
In the above, $\otimes$ denotes the Kronecker product, and
the product $\odot$ is defined as
\begin{equation}\label{defiQprod}
A \odot B = \expm \left( \logm (A) + \logm (B) \right).
\end{equation}
Here, $\Pi$ represents the sequence of $\odot$-products. After each round of message updates, the marginal density operator $q_k$ for any $k \in [n]$ is recovered as follows:
\begin{equation}\label{QBP:recoverq}
q_k:=\begin{cases}
\Big[\Psi_k^{-1}\odot \Big(\Pi_{kj\in \E} M^\lambda_{kj} \Big) \odot \Big(\Pi_{ik\in \E} M^\mu_{ik}\Big)\Big]^{-1/(d_k-1)}/z_k, & d_k>1\\
 \Trl (Q_{ik}) & \exists i\in [n],\ {\rm s.t.}\ ik\in \E \\
 \Trr (Q_{kj}) &\exists j\in [n],\ {\rm s.t.}\ kj\in \E, 
\end{cases}
\end{equation}
where $z_k>0$ is a normalization constant to make sure $\Tr(q_k)=1.$ Although various algorithms have been proposed to solve the classical (\ref{BVP}), the research 
for the quantum case (\ref{QBVP}) is still in an early stage and we have not seen any optimization algorithms proposed to solve (\ref{QBVP}) in the literature. In the next subsection, we will show that our Bregman ADMM can be extended to solve (\ref{QBVP}).

\subsection{Bregman ADMM for (\ref{QBVP})}

In this subsection, we discuss how to extend Algorithm~\ref{alg:BADMMNnu} to solve (\ref{QBVP}). In order to facilitate our discussion, we need some notation and definitions that are similar to those presented in the beginning of 
subsection \ref{Subsec:bdis}.
Define the following linear spaces
\begin{equation}\label{defihH}
\hH_1:=\Pi_{k\in [n]} \Hr^r,\ \hH_2:=\Pi_{ij\in \E} \Hr^{r^2},\ \hH_3:=\Pi_{ij\in \E} \Hr^r\times \Hr^r.
\end{equation}
Let $\hbC:=(\hC_{ij})_{ij\in \E},$ $\hbc:=(\hc_k)_{k\in [n]}.$ Define the following
convex sets
\begin{eqnarray}
\label{defihX}
\hX &:=&\left\{ \bq\in \hH_1:\ \forall k\in [n],\ q_k\in \Hr^r_+,\ \Tr(q_k)=1 \right\},
\\
\label{defihY}
\hY &:=& \left\{ \bQ\in \hH_2:\ \forall ij\in \E,\ Q_{ij}\in \Hr^{r^2}_+,\ \Tr(Q_{ij})=1 \right\},
\end{eqnarray}
and linear maps $\hA:\hH_1\rightarrow \hH_3,$ $\hB:\hH_2\rightarrow \hH_3$ such that for any $\bq\in \hH_1,$ $\bQ\in \hH_2$
\begin{equation}\label{defihAhB}
\hA(\bq):=(-q_i,-q_j)_{ij\in \E},\quad \hB(\bQ):=(\Trr(Q_{ij}),\Trl(Q_{ij}))_{ij\in \E}.
\end{equation}
Define the functions $\hF:\X\rightarrow \R$ and $\hG:\Y\rightarrow \R$ such that for any $\bq\in \hX,$ $\bQ\in \hY$,
\begin{equation}\label{defihFhG}
\hF(\bq):=\<\hbc,\bq\>-\sum_{k\in [n]}(d_k-1)\<q_k,\logm (q_k)\>,\ \hG(\bQ):=\<\hbC,\bQ\>+\sum_{ij\in \E} \<Q_{ij},\logm (Q_{ij})\>.
\end{equation}
Define the function $\hat{\psi}:\hX\rightarrow \R$ such that for any $\bq\in \hX,$ 
\begin{equation}\label{defihpsi}
\hat{\psi}(\bq):=\sum_{k\in [n]}(d_k-1)\<q_k,\logm (q_k)\>.
\end{equation}
Similar to the strong subadditivity (SSA) of Shannon entropy, von Neumann entropy also has SSA \cite{lieb1973proof}. Thus, the following function $\hat{\varphi}:\hY\rightarrow \R$ is convex
\begin{align}\label{defihvarphi}
&\hat{\varphi}(\bQ):=\sum_{ij \in \E} \Big( 2 \<\logm (Q_{ij}), Q_{ij}\>\notag \\
&\qquad\quad - \<\logm (\Trr(Q_{ij})), \Trr (Q_{ij})\> - \<\logm (\Trl(Q_{ij})), \Trl(Q_{ij})\> \Big).
\end{align}
With the above notation, our Bregman ADMM for solving (\ref{QBVP}) is described in Algorithm~\ref{alg:BADMM_QT}.

\begin{algorithm}
	\renewcommand{\algorithmicrequire}{\textbf{Input:}}
	\renewcommand{\algorithmicensure}{\textbf{Output:}}
	\caption{Quantum Bregman ADMM for (\ref{QBVP}).}
	\label{alg:BADMM_QT}
	\begin{algorithmic}
		\STATE {\bf Initialization}: Choose $\rho^1 > 0$, $\bq^1 \in \ri{\hX}, \bQ^1 \in \ri{\hY}, \Lambda^1 = {\bf 0}\in \hH_3$
		\FOR{$t = 1, 2, \dots$}
		\STATE 1. $\bq^{t+1} \in \arg\min_{\hX} \left\{ \hF(\bq) - \< \Lambda^t, \hA(\bq) \> + \rho^t B_{\hat{\phi}}(-\hA(\bq), \hB(\bQ^t))+B_{\hat{\psi}}(\bq,\bq^t) \right\}$
		\STATE 2. $\bQ^{t+1} \in \arg\min_{\hY} \left\{ \hG(\bQ) - \< \Lambda^t, \hB(\bQ) \> + \rho^t B_{\hat{\phi}}(\hB(\bQ), -\hA(\bq^{t+1}))+\rho^t B_{\hat{\varphi}}(\bQ,\bQ^t) \right\}$
		\STATE 3. $\Lambda^{t+1} := \Lambda^t - \rho^t \( \nabla \hat{\phi}(\B(\bQ^{t+1})) - \nabla \hat{\phi}(-\A(\bq^{t+1})) \)$
		\STATE 4. Checking the KKT residual and updating $\rho_{t+1}$ similarly as in Algorithm~\ref{alg:BADMMNnu}.
		\ENDFOR
	\end{algorithmic}
\end{algorithm}
In Algorithm~\ref{alg:BADMM_QT}, we omit the details of calculating the KKT residual and updating the penalty parameter because these calculations are very similar to that in Algorithm~\ref{alg:BADMMNnu}, which has been discussed in detail in Subsection~\ref{Subsec:KKT} and Subsection~\ref{subsec:updatebeta}. Similar to Algorithm~\ref{alg:BADMMNnu}, the subproblems in Algorithm~\ref{alg:BADMM_QT} have closed form solutions. The subproblem in Step 1 can be written as for any $k\in [n],$
\begin{equation}\label{BADMMsub1_QT}
q^{t+1}_k:=\arg\min\Big\{ \<\tilde{c}_k^t,q_k\>+\rho^t\cdot d_k\< q_k,\logm ( q_k) \>:\ \Tr(q_k)=1,\ q_k\in \Hr^r_+\Big\},
\end{equation}
where $\tilde{c}_k^t$ is defined as
\begin{align*}\label{defickt_QT}
&\tilde{c}_k^t:=\hc_k-(d_k-1)\logm (q_k^t)\notag \\
&\qquad +\sum_{kj\in \E}\(\lambda_{kj}^t-\rho^t\logm (\Trr(Q_{kj}^t)\)+\sum_{ik\in \E} \(\mu^t_{ik}-\rho^t\logm (\Trl(Q_{ik}^t))\).
\end{align*}
Problem (\ref{BADMMsub1_QT}) has the following closed form solution.
\begin{equation}\label{BADMMsub1_QTcloed}
q^{t+1}_k:=\expm \(-\tilde{c}_k^t/(\rho^t d_k)\)/\Tr\( \expm \(-\tilde{c}_k^t/(\rho^t d_k)\) \).
\end{equation}
The subproblem in Step 2 can be solved independently for 
each $ij\in \E$ as follows:
\begin{equation}\label{updateQ2_QT}
Q^{t+1}_{ij} := \arg\min \left\{ \<\widetilde{C}_{ij}^t, Q_{ij}\> + (1 + 2\rho^t) \<\logm (Q_{ij}), Q_{ij}\> : \Tr(Q_{ij})=1,\ Q_{ij}\in \Hr^{r^2}_+ \right\},
\end{equation}
where $\widetilde{C}_{ij}^t$ is defined as:
\begin{align*}\label{defiCijth_QT}
&\widetilde{C}_{ij}^t := \hC_{ij} - \(\lambda_{ij}^t + \rho^t \logm (q_i^{t+1})\) \otimes I_r - I_r\otimes \(\mu^t_{ij} + \rho^t \logm (q_j^{t+1})\)\notag \\
& - \rho^t \Big( 2\logm (Q_{ij}^t) - \logm (\Trr(Q_{ij}^t))\otimes I_r - I_r\otimes \logm ( \Trl\({Q_{ij}^t})\) \Big).
\end{align*}
Problem (\ref{updateQ2_QT}) admits the following closed-form solution:
\begin{equation}\label{updateQ2_QT1}
Q^{t+1}_{ij} := \expm\(-\widetilde{C}_{ij}^t / (1 + 2\rho^t)\)/ \Tr\(\expm\(-\widetilde{C}_{ij}^t / (1 + 2\rho^t)\)\).
\end{equation}

\begin{rem}\label{furtherrem}
Unlike Algorithm~\ref{alg:BADMMN}, which have rigorous convergence guarantee, in this paper we only show the applicability of Algorithm~\ref{alg:BADMM_QT} for solving (\ref{QBVP}), without conducting its convergence analysis. It would be interesting to study the convergence behaviour of Algorithm~\ref{alg:BADMM_QT}. Because the 
positivity of the local minima of (\ref{BVP}) in Theorem~\ref{intthm} is critical in the theoretical analysis, it would also be interesting to study whether the local minima of (\ref{QBVP}) are positive definite. We leave these investigations as future research. 
\end{rem}
In the next section, we will use numerical experiments to test the computational behaviour of Algorithm~\ref{alg:BADMMNnu} and Algorithm~\ref{alg:BADMM_QT} compared with various state-of-the-art algorithms for (\ref{BVP}) and (\ref{QBVP}). 


\section{Numerical experiments}\label{Sec:nume}

In this section, we conduct numerical experiments to evaluate the efficiency and robustness of Algorithm~\ref{alg:BADMMNnu} and Algorithm~\ref{alg:BADMM_QT}, referred to as BADMM and QBADMM for simplicity. For (\ref{BVP}), we compare BADMM with the double-loop method (CCCP) and Belief Propagation (BP). For (\ref{QBVP}), we only compare QBADMM with quantum belief propagation (QBP), because we are unaware of any other methods proposed for solving (\ref{QBVP}). All algorithms are implemented in {\sc Matlab} R2021b and executed on a workstation equipped with an Intel(R) Xeon(R) CPU E5-2680 v3 @ 2.50GHz processor and 128GB of RAM.

For CCCP, as recommended in \cite{yuille2002cccp}, we run the BCM subroutine for 5 iterations to solve its subproblem approximately. BP has two variants: Jacobi Belief Propagation (JBP) and Gauss-Seidel Belief Propagation (GSBP). JBP updates all messages simultaneously in parallel during each iteration, akin to the Jacobi method in numerical linear algebra. In contrast, GSBP updates each message sequentially, incorporating the most recent information from previously updated messages within the same iteration. Similarly, QBP has two variants JQBP and GSQBP.

We use the KKT residual (\ref{KKTres}) to measure stationarity, as the Lagrangian multipliers are available in all these algorithms. Note that for BP and QBP, ${\rm Resd}(\bq,\bQ)=0$, since the primal solutions are recovered directly from the equation of dual feasibility. Unless otherwise specified, the tolerance is set to $10^{-6}$, the maximum number of iterations to 10,000, and the maximum running time to 3600 seconds for all algorithms. For CCCP, the number of iterations reported corresponds to the BCM iterations in the subproblem, as each iteration incurs computational costs comparable to those of JBP, GSBP, and BADMM. For (\ref{BVP}), all algorithms are initialized with $\bq=({\bf 1}_r/r)_{k\in [n]}$, $\bQ=({\bf 1}_{r\times r}/r^2)_{ij\in \E}$, and $\Lambda:={\bf 0}\in \H_3$. For (\ref{QBVP}), all algorithms are initialized with $\bq=(I_r/r)_{k\in [n]}$, $\bQ=(I_{r^2}/r^2)_{ij\in \E}$, and $\Lambda:={\bf 0}\in \hH_3.$ For QBADMM, we set $\rho^1=1$.

We summarize the performance of these algorithms in tables, reporting the primal KKT residual (Resp), dual KKT residual (Resd), objective function value (fval), number of iterations (iter), and running time (time). An asterisk (*) denotes algorithms that reached the maximum number of iterations or the maximum running time without achieving the required accuracy. The algorithm that meets the tolerance in the shortest running time is highlighted in {\bf bold font}.

\subsection{Example of (\ref{BVP}): spin-glass models} 

In the first example, we test spin-glass models, which has been tested in \cite{yuille2002cccp,yedidia2001bethe,welling2013belief}. We generate problem data in the same way as in \cite{yuille2002cccp}. In detail, the graph $G$ is generated as an $n_1\times n_1$ 2D grid graph, where $n_1\in \{50,100\}$ and $n_1\times n_1\times n_1$ 3D lattice graph, where $n_1\in \{20,50\}.$ The data $c_k\in \R^2$ and $C_{ij}\in \R^{2\times 2}$ are generated from Gaussian distribution $N(0,\sigma^2)$ where $\sigma\in \{1,2,5\}.$ The results are shown in Table~\ref{spin2d} and Table~\ref{spin3d}. 

\begin{center}
\begin{footnotesize}
\begin{longtable}{|c|c|ccc|cl|}
\caption{Comparison of BADMM, JBP, GSBP and CCCP for 2D spin-glass models}
\label{spin2d}
\\
\hline
problem & algorithm & Resp & Resd & fval& iter & time \\ \hline
\endhead
 $n$=2500 & {\bf JBP} & 5.38e-07 & 0 & -3.4105207e+03 & 17 & 1.11e-01 \\
 $m$=4900 & GSBP & 3.93e-07 & 0 & -3.4105201e+03 & 11 & 2.10e+00 \\
 $\sigma$=1 & BADMM & 1.65e-07 & 4.68e-07& -3.4105249e+03 & 161 & 6.95e-01 \\
  & CCCP & 1.24e-08 & 9.07e-07 & -3.4105192e+03 & 450 & 9.22e+01 \\
 \hline
 $n$=2500 & {\bf JBP} & 8.08e-07 & 0 & -6.1862453e+03 & 49 & 1.59e-01 \\
 $m$=4900 & GSBP & 9.29e-07 & 0 & -6.1862436e+03 & 26 & 4.60e+00 \\
 $\sigma$=2 & BADMM & 3.25e-07 & 9.12e-07& -6.1862513e+03 & 181 & 7.00e-01 \\
  & CCCP & 3.64e-08 & 9.30e-07 & -6.1862423e+03 & 555 & 1.14e+02 \\
 \hline
 $n$=2500 & JBP* & 3.58e+00* & 0 & -1.5038001e+04 & 10000 & 2.75e+01 \\
 $m$=4900 & GSBP* & 5.26e+00* & 0 & -1.5035182e+04 & 10000 & 1.70e+03 \\
 $\sigma$=5 & {\bf BADMM} & 2.54e-07 & 7.19e-07& -1.5034679e+04 & 261 & 1.08e+00 \\
  & CCCP & 4.31e-08 & 9.88e-07 & -1.5034678e+04 & 730 & 1.46e+02 \\
 \hline
 $n$=10000 & {\bf JBP} & 4.80e-07 & 0 & -1.3851321e+04 & 19 & 2.03e-01 \\
 $m$=19800 & GSBP & 2.73e-07 & 0 & -1.3851320e+04 & 12 & 8.72e+00 \\
 $\sigma$=1 & BADMM & 2.90e-07 & 8.19e-07& -1.3851332e+04 & 171 & 2.02e+00 \\
  & CCCP & 1.46e-08 & 8.89e-07 & -1.3851318e+04 & 500 & 4.30e+02 \\
 \hline
 $n$=10000 & {\bf JBP} & 6.25e-07 & 0 & -2.5208488e+04 & 67 & 6.57e-01 \\
 $m$=19800 & GSBP & 9.63e-07 & 0 & -2.5208485e+04 & 31 & 2.24e+01 \\
 $\sigma$=2 & BADMM & 2.35e-07 & 8.27e-07& -2.5208490e+04 & 411 & 4.76e+00 \\
  & CCCP & 2.69e-08 & 9.94e-07 & -2.5208487e+04 & 1360 & 1.17e+03 \\
 \hline
 $n$=10000 & JBP* & 2.28e+01* & 0 & -6.1346285e+04 & 10000 & 9.70e+01 \\
 $m$=19800 & GSBP* & 4.02e+01* & 0 & -6.1344648e+04 & 5030 & 3.60e+03 \\
 $\sigma$=5 & {\bf BADMM} & 3.44e-07 & 9.37e-07& -6.1351748e+04 & 401 & 4.70e+00 \\
  & CCCP & 7.66e-08 & 9.94e-07 & -6.1351745e+04 & 1135 & 9.70e+02 \\
 \hline
 \end{longtable}
\end{footnotesize}
\end{center}

\begin{center}
\begin{footnotesize}
\begin{longtable}{|c|c|ccc|cl|}
\caption{Comparison of BADMM, JBP, GSBP and CCCP for 3D spin-glass models}
\label{spin3d}
\\
\hline
problem & algorithm & Resp & Resd & fval& iter & time \\ \hline
\endhead
 $n$=8000 & {\bf JBP} & 7.90e-07 & 0 & -1.2344487e+04 & 26 & 3.35e-01 \\
 $m$=22800 & GSBP & 8.26e-07 & 0 & -1.2344486e+04 & 15 & 1.35e+01 \\
 $\sigma$= 1 & BADMM & 4.31e-07 & 8.27e-07& -1.2344498e+04 & 321 & 3.81e+00 \\
  & CCCP & 2.09e-07 & 9.98e-07 & -1.2344481e+04 & 915 & 8.93e+02 \\
 \hline
 $n$=8000 & JBP* & 4.69e-01* & 0 & -2.3002998e+04 & 10000 & 1.10e+02 \\
 $m$=22800 & GSBP* & 8.58e-01* & 0 & -2.3003425e+04 & 4237 & 3.60e+03 \\
 $\sigma$= 2 & {\bf BADMM} & 5.25e-07 & 9.98e-07& -2.3003083e+04 & 471 & 5.54e+00 \\
  & CCCP & 2.41e-07 & 9.95e-07 & -2.3003070e+04 & 1385 & 1.33e+03 \\
 \hline
 $n$=8000 & JBP* & 9.47e+01* & 0 & -5.6343253e+04 & 10000 & 1.09e+02 \\
 $m$=22800 & GSBP* & 9.28e+01* & 0 & -5.6437373e+04 & 4273 & 3.60e+03 \\
 $\sigma$= 5 & {\bf BADMM} & 5.03e-07 & 9.07e-07& -5.6441554e+04 & 711 & 8.02e+00 \\
  & CCCP & 3.81e-07 & 9.92e-07 & -5.6441537e+04 & 2570 & 2.46e+03 \\
 \hline
 $n$=125000 & {\bf JBP} & 8.45e-07 & 0 & -1.9678383e+05 & 39 & 5.21e+00 \\
 $m$=367500 & GSBP & 8.67e-07 & 0 & -1.9678383e+05 & 20 & 2.84e+02 \\
 $\sigma$= 1 & BADMM & 4.50e-07 & 8.57e-07& -1.9678385e+05 & 481 & 6.75e+01 \\
 & CCCP* & 4.93e-02* & 3.56e-01* & -1.9670712e+05 & 230 & 3.64e+03 \\
 \hline
 $n$=125000 & JBP* & 1.82e+01* & 0 & -3.6789200e+05 & 10000 & 1.28e+03 \\
 $m$=367500 & GSBP* & 2.36e+01* & 0 & -3.6791424e+05 & 254 & 3.60e+03 \\
 $\sigma$= 2 & {\bf BADMM} & 4.78e-07 & 9.09e-07& -3.6792269e+05 & 1761 & 2.44e+02 \\
  & CCCP* & 2.05e-01* & 9.45e-01* & -3.6772230e+05 & 225 & 3.60e+03 \\
 \hline
 $n$=125000 & JBP* & 1.18e+03* & 0 & -9.0248431e+05 & 10000 & 1.28e+03 \\
 $m$=367500 & GSBP* & 1.33e+03* & 0 & -9.0367298e+05 & 255 & 3.61e+03 \\
 $\sigma$= 5 & {\bf BADMM} & 5.26e-07 & 9.85e-07& -9.0393910e+05 & 3681 & 5.04e+02 \\
   & CCCP* & 3.95e-01* & 1.11e+00* & -9.0366947e+05 & 230 & 3.68e+03 \\
 \hline
 \end{longtable}
\end{footnotesize}
\end{center}

From Table~\ref{spin2d} and Table~\ref{spin3d}, we observe that the behavior of JBP and GSBP is unstable. Both methods either converge after only a few iterations or diverge. While GSBP requires fewer iterations than JBP, JBP is significantly faster. This speed advantage arises because JBP updates the messages in parallel, whereas GSBP updates them sequentially. In our implementation of JBP in {\sc Matlab}, we store all messages in an $r \times r \times m$ 3D array, allowing efficient tensor operations. In contrast, for GSBP, we use a cell array to store messages and rely on a for loop to update them. The use of for loops is known to be much slower than performing operations directly on tensors because {\sc Matlab} leverages multi-core parallel computing for matrix and tensor computations. 

The situation for BADMM and CCCP is similar. BADMM requires around 3 times fewer iterations than CCCP, and its running time is at least ten times, and sometimes even more than one hundred times, faster than that of CCCP. This demonstrates the critical importance of the parallelizability of an algorithm.

When comparing BADMM and BP, we observe that BP is indeed more efficient than BADMM when the former converges. However, the convergence of BP is not guaranteed
and its convergence is unpredictable. In contrast, 
BADMM is much more robust as it successfully solves all instances, whereas JBP and GSBP solve only half of the instances. Inspired by the efficient yet unstable behavior of BP, a practical strategy is to initially run BP for a few iterations to check for convergence. If BP does not converge, switching to BADMM can be a viable alternative. Although BADMM is not as efficient as BP, it offers much greater stability.

One aspect worth mentioning is that we tested Algorithm~\ref{alg:BADMMNnu} with Step 3 replaced by the linear dual update from Algorithm~\ref{alg:BADMM}. However, we observed that this modified algorithm failed to converge for any of the tested instances. Consequently, we do not present these results in the paper. This observation highlights that the nonlinear dual update in Algorithm~\ref{alg:BADMMN} is not only critical for theoretical analysis but also essential in practice.

\subsection{Example of (\ref{BVP}): sensor network localization}

In the second example, we test the Sensor Network Localization (SNL) problem \cite{mao2007wireless}, which aims to estimate the locations of sensors based on noisy observations of pairwise distances. Traditional approaches treat SNL as a nonlinear regression problem and employ algorithms such as gradient descent \cite{tang2024riemannian,liang2004gradient} and semidefinite programming \cite{biswas2004semidefinite,chaudhury2015large} to solve it. In \cite{ihler2004nonparametric}, Ihler et al. reformulated SNL as a graphical model and proposed the Nonparametric Belief Propagation (NBP) method as a 
solver.

Specifically, consider $n$ sensors $x_1, x_2, \ldots, x_n$ located in a planar region $[0,1]^2$. For any distinct $i, j \in [n]$, the noisy distance between sensors is observed with probability $P_o(x_i, x_j)$, given by:
\begin{equation}\label{SNLprob}
d_{ij} = \|x_i - x_j\| + \nu_{ij},\ \nu_{ij} \sim N(0, \sigma^2),\ P_o(x_i, x_j) := \exp\(-\frac{\|x_i - x_j\|^2}{2R^2}\),
\end{equation}
where $N(0, \sigma^2)$ is a normal distribution with standard deviation $\sigma > 0$, controlling the noise level, and $R > 0$ is a parameter determining the likelihood of an observation. Additionally, we assume the presence of $a$ anchors $y_1, y_2, \ldots, y_a$, whose locations are known. The noisy distance between a sensor $x_k$ and an anchor $y_\ell$ is observed with probability $P_o(x_k, y_\ell)$, given by:
\begin{equation}\label{SNLSA}
d_{k\ell} = \|x_k - y_\ell\| + \nu_{k\ell},\ \nu_{k\ell} \sim N(0, \sigma^2),\ P_o(x_k, y_\ell) := \exp\(-\frac{\|x_k - y_\ell\|^2}{2R^2}\).
\end{equation}
We construct a graph $G = ([n], \E)$, where $ij \in \E$ if and only if the distance between $x_i$ and $x_j$ is observed. Similarly, let $\E_1 \subset [n] \times [a]$, where $k\ell \in \E_1$ if and only if the distance between $x_k$ and $y_\ell$ is observed. With these definitions, we consider the graphical model in (\ref{MPDF}) with potential functions defined as:
\begin{equation}\label{defiPotenij}
\forall ij \in \mathcal{E},\ \Psi_{ij}(x_i, x_j) := P_o(x_i, x_j) \frac{\exp[-(d_{ij} - \|x_i - x_j\|)^2 / (2\sigma^2) ]}{\sqrt{2\pi\sigma^2}},
\end{equation}
\begin{equation}\label{defiPotenkl}
\forall k \in [n],\ \Psi_k(x_k) := \prod_{k\ell \in \mathcal{E}_1} P_o(x_k, y_\ell) \frac{\exp[-(d_{k\ell} - \|x_k - y_\ell\|)^2 / (2\sigma^2)]}{\sqrt{2\pi\sigma^2}}.
\end{equation}
Unlike previous graphical models with finite states, the potential functions in (\ref{defiPotenij}) and (\ref{defiPotenkl}) are continuous functions. To apply our algorithm, we discretize $[0,1]^2$ into a grid $\{0, 1/t, 2/t, \ldots, (t-1)/t, 1\}^2$ and restrict the domain of the potential functions to this discrete set. The resulting number of states is $r = (t+1)^2$. Note that this discretization introduces significant model error if $t$ is too small and dramatically increases problem size if $t$ is too large. Ihler et al. \cite{ihler2004nonparametric} addressed this issue by using NBP with stochastic approximation to solve the continuous model directly, avoiding discretization. Here, we use discretization solely to test the behavior of our algorithm. Extending Bregman ADMM to the continuous setting is an interesting direction for future research.

Akin to the experiments in \cite{ihler2004nonparametric}, we test SNL problems with and without outlier noise. The first experiment doesn't contain outlier noise. We set $n = 100$, $a = 4$, $t = 10$, $\sigma \in \{0.02, 0.05, 0.1\}$, and $R \in \{0.1, 0.2\}$. All sensors and anchors are uniformly distributed in $[0,1]^2$. We use the \texttt{rng('default')} command in {\sc Matlab} to ensure reproducibility of results. Since the discretization already introduces model error, it is unnecessary to solve the problems to high accuracy. We choose the tolerance to be $10^{-4}$ for all the methods. The results are shown in Table~\ref{SNL}.

\begin{center}
\begin{footnotesize}
\begin{longtable}{|c|c|ccc|cl|}
\caption{Comparison of BADMM, JBP, GSBP and CCCP for Sensor network localization without outlier noise}
\label{SNL}
\\
\hline
problem & algorithm & Resp & Resd & fval& iter & time \\ \hline
\endhead
 $n$=100 & JBP* & 3.98e+01* & 0 & 2.3809698e+03 & 10000 & 5.08e+02 \\ 
 $m$=266 & GSBP* & 5.50e+00* & 0 & 2.2703467e+03 & 10000 & 1.58e+03 \\ 
 $\sigma$=0.02 & {\bf BADMM} & 7.33e-05 & 9.68e-05& 2.2396171e+03 & 2181 & 1.39e+02 \\ 
  $R$=0.1& CCCP & 7.01e-08 & 9.96e-05 & 2.2412749e+03 & 7500 & 1.36e+03 \\ 
 \hline 
 $n$=100 & JBP* & 2.70e-01* & 0 & 2.1984481e+03 & 10000 & 5.73e+02 \\ 
 $m$=266 & {\bf GSBP} & 8.20e-05 & 0 & 2.1987160e+03 & 73 & 1.24e+01 \\ 
 $\sigma$=0.05 & BADMM & 3.76e-06 & 1.78e-05& 2.1987218e+03 & 781 & 5.58e+01 \\ 
  $R$=0.1& CCCP & 1.50e-07 & 1.00e-04 & 2.2004457e+03 & 4670 & 8.90e+02 \\ 
 \hline 
 $n$=100 & {\bf JBP} & 9.88e-05 & 0 & 2.1918324e+03 & 96 & 5.57e+00 \\ 
 $m$=266 & GSBP & 9.61e-05 & 0 & 2.1918182e+03 & 49 & 8.30e+00 \\ 
 $\sigma$=0.1 & BADMM & 2.50e-05 & 3.34e-05& 2.1918158e+03 & 1191 & 8.65e+01 \\ 
  $R$=0.1& CCCP & 1.02e-06 & 9.99e-05 & 2.1918183e+03 & 2465 & 4.63e+02 \\ 
 \hline 
 $n$=100 & JBP & 9.30e-07 & 0 & 7.8468525e+03 & 35 & 8.31e+00 \\ 
 $m$=860 & {\bf GSBP} & 2.01e-06 & 0 & 7.8708632e+03 &  6 & 3.07e+00 \\ 
 $\sigma$=0.02 & BADMM & 7.55e-05 & 2.37e-05& 7.3861696e+03 & 301 & 8.90e+01 \\ 
  $R$=0.2& CCCP* & 1.68e+40* & 3.68e+37* & 2.1335418e+08 & 7025 & 3.60e+03 \\ 
 \hline 
 $n$=100 & {\bf JBP} & 4.71e-05 & 0 & 7.0430711e+03 & 22 & 6.06e+00 \\ 
 $m$=860 & GSBP & 8.80e-05 & 0 & 7.0493708e+03 & 19 & 9.92e+00 \\ 
 $\sigma$=0.05 & BADMM & 7.83e-05 & 9.67e-05& 7.0457827e+03 & 1891 & 6.51e+02 \\ 
  $R$=0.2& CCCP* & 2.66e+37* & 3.37e+35* & 4.4979669e+07 & 6595 & 3.60e+03 \\ 
 \hline 
 $n$=100 & {\bf JBP} & 9.49e-05 & 0 & 7.1567571e+03 & 38 & 1.10e+01 \\ 
 $m$=860 & GSBP & 5.70e-05 & 0 & 7.1611669e+03 & 16 & 8.56e+00 \\ 
 $\sigma$=0.1 & BADMM & 8.01e-05 & 9.63e-05& 7.1611816e+03 & 731 & 2.65e+02 \\ 
  $R$=0.2& CCCP* & 3.46e+36* & 5.16e+33* & 1.7696345e+07 & 6575 & 3.60e+03 \\ 
 \hline
\end{longtable}
\end{footnotesize}
\end{center}

From Table~\ref{SNL}, we observe that the performance of JBP and GSBP is quite impressive, being nearly one hundred times faster than BADMM in the fifth example. However, they fail to converge in the first example, whereas BADMM successfully converges for all instances. In the fourth example, although both JBP and GSBP find nearly stationary points, their function values are higher than those obtained by BADMM. Additionally, Figure~\ref{figSNL} illustrates that the predicted locations by BADMM are indeed more accurate than those produced by JBP and GSBP. Regarding CCCP, this algorithm exhibits significant numerical issues in the last three instances, resulting in outputs that are not even probability vectors.

\begin{figure}[htbp]
\centerline{\includegraphics[height=8cm,width=10cm]{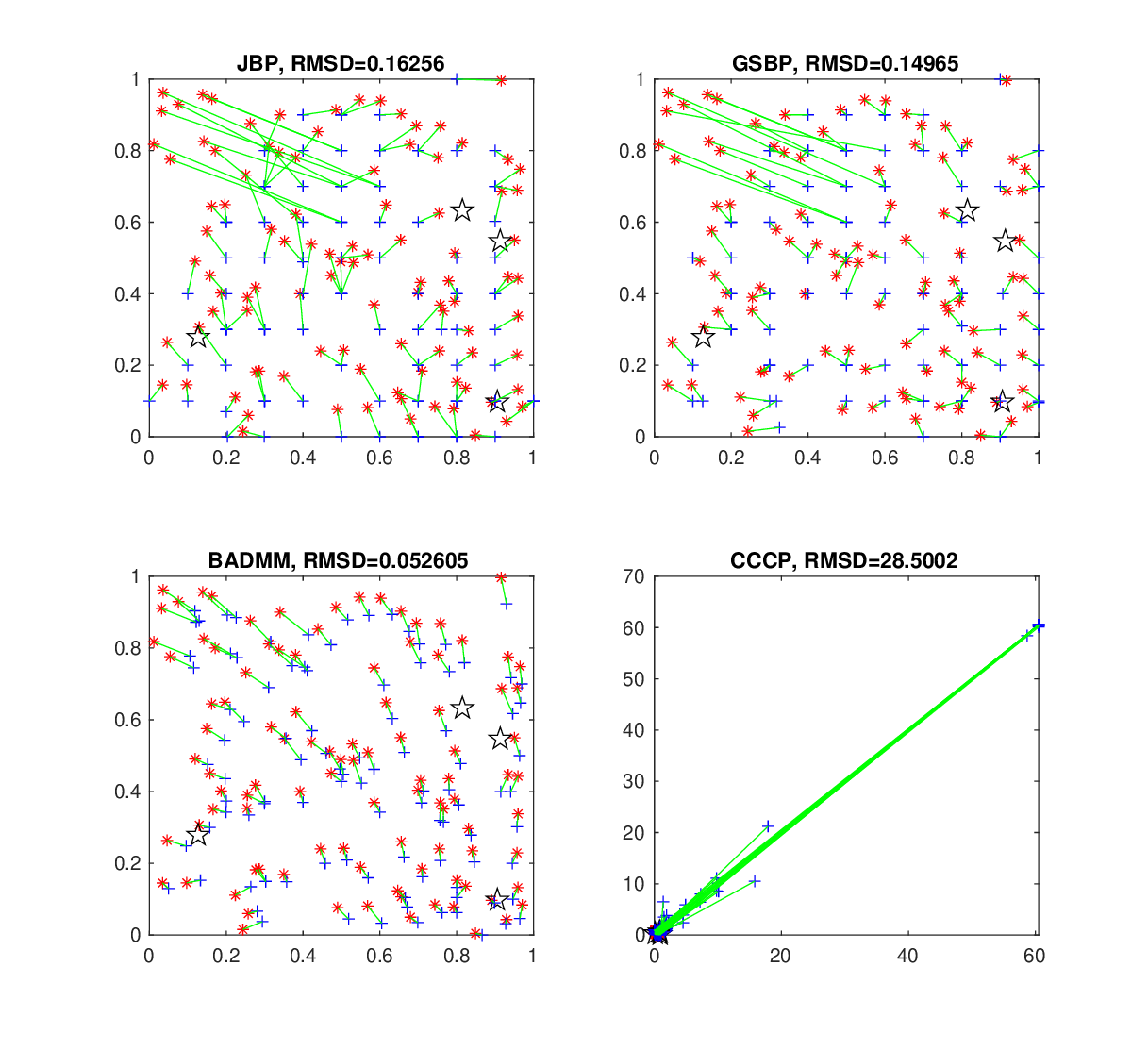}}
\caption{Results for SNL without outlier noise. $\sigma=0.02,R=0.2,m=100.$ \blue{+}'s are predicted sensors, \red{\ding{83}}'s are real sensors, \ding{73}'s are anchors. RMSD$=\sqrt{\sum_{k\in [n]}\|x_k-\hat{x}_k\|^2/n},$ where $\{\hat{x}_k:\ k\in [n]\}$ are predicted locations of sensors.} 
\label{figSNL}
\end{figure}

In the second experiment, we add outlier noise by randomly selecting $5\%$ of the distance measurement and replacing them by uniformly distributed random numbers in $[0,1].$ To avoid overfitting the outlier noise, we modify the potential as follows:
\begin{equation}\label{defiPotenijOL}
\forall ij \in \mathcal{E},\ \Psi_{ij}(x_i, x_j) := P_o(x_i, x_j) \frac{\exp[-|d_{ij} - \|x_i - x_j\|| / (2\sigma) ]}{\sqrt{2\pi\sigma}},
\end{equation}
\begin{equation}\label{defiPotenklOL}
\forall k \in [n],\ \Psi_k(x_k) := \prod_{k\ell \in \mathcal{E}_1} P_o(x_k, y_\ell) \frac{\exp[-|d_{k\ell} - \|x_k - y_\ell\|| / (2\sigma)]}{\sqrt{2\pi\sigma}},
\end{equation}
which are obtained by removing squares from (\ref{defiPotenij}) and (\ref{defiPotenklOL}). We set $n = 100$, $a = 4$, $t = 10$, $\sigma \in \{0.005, 0.01, 0.02\}$, and $R \in \{0.1, 0.2\}.$ The results are shown in Table~\ref{SNLOL}

\begin{center}
\begin{footnotesize}
\begin{longtable}{|c|c|ccc|cl|}
\caption{Comparison of BADMM, JBP, GSBP and CCCP for Sensor network localization with $5 \% $ outlier noise}
\label{SNLOL}
\\
\hline
problem & algorithm & Resp & Resd & fval& iter & time \\ \hline
\endhead
 $n$=100 & JBP* & 5.57e+00* & 0 & 2.3301846e+03 & 10000 & 5.91e+02 \\
 $m$=266 & GSBP* & 1.83e+01* & 0 & 2.2766568e+03 & 10000 & 1.69e+03 \\
 $\sigma$=5.00e-03 & {\bf BADMM} & 3.13e-05 & 9.78e-05& 2.2652299e+03 & 801 & 6.11e+01 \\
  $R$=1.00e-01& CCCP & 2.45e-07 & 9.94e-05 & 2.2652519e+03 & 4140 & 8.11e+02 \\
 \hline
 $n$=100 & JBP* & 6.69e+00* & 0 & 2.2389592e+03 & 10000 & 5.72e+02 \\
 $m$=266 & GSBP* & 2.37e+01* & 0 & 2.1908650e+03 & 10000 & 1.68e+03 \\
 $\sigma$=1.00e-02 & {\bf BADMM} & 3.75e-05 & 9.29e-05& 2.1954595e+03 & 481 & 3.64e+01 \\
  $R$=1.00e-01& CCCP & 2.74e-07 & 9.98e-05 & 2.1916139e+03 & 6425 & 1.24e+03 \\
 \hline
 $n$=100 & JBP* & 1.40e+00* & 0 & 2.1913097e+03 & 10000 & 5.73e+02 \\
 $m$=266 & GSBP* & 4.97e-01* & 0 & 2.1782500e+03 & 10000 & 1.67e+03 \\
 $\sigma$=2.00e-02 & {\bf BADMM} & 2.68e-05 & 9.33e-05& 2.1786968e+03 & 1041 & 7.62e+01 \\
  $R$=1.00e-01& CCCP & 7.39e-07 & 9.97e-05 & 2.1786950e+03 & 2605 & 5.09e+02 \\
 \hline
 $n$=100 & {\bf JBP} & 9.27e-05 & 0 & 8.3100066e+03 & 140 & 3.75e+01 \\
 $m$=860 & GSBP & 3.46e-05 & 0 & 8.3391879e+03 & 17 & 9.30e+00 \\
 $\sigma$=5.00e-03 & BADMM & 9.94e-05 & 6.34e-05& 8.0383921e+03 & 321 & 1.09e+02 \\
  $R$=2.00e-01& CCCP* & 5.39e+35* & 4.54e+33* & 3.2474887e+07 & 6290 & 3.60e+03 \\
 \hline
 $n$=100 & JBP* & 6.36e-01* & 0 & 7.2462420e+03 & 10000 & 2.92e+03 \\
 $m$=860 & GSBP* & 2.57e+00* & 0 & 7.2584189e+03 & 6487 & 3.60e+03 \\
 $\sigma$=1.00e-02 & {\bf BADMM} & 8.27e-05 & 4.97e-05& 7.2323807e+03 & 321 & 1.12e+02 \\
  $R$=2.00e-01& CCCP* & 3.69e+35* & 1.93e+33* & 1.9004665e+07 & 6310 & 3.60e+03 \\
 \hline
 $n$=100 & {\bf JBP} & 9.11e-05 & 0 & 6.9682013e+03 & 23 & 6.07e+00 \\
 $m$=860 & GSBP & 2.31e-05 & 0 & 6.9681874e+03 & 19 & 1.04e+01 \\
 $\sigma$=2.00e-02 & BADMM & 8.40e-05 & 5.03e-05& 6.9791008e+03 & 741 & 2.58e+02 \\
  $R$=2.00e-01& CCCP* & 2.54e+35* & 5.61e+32* & 1.3099666e+07 & 6315 & 3.60e+03 \\
 \hline
 \end{longtable}
\end{footnotesize}
\end{center}

From Table~\ref{SNLOL}, we observe that JBP and GSBP perform poorly for SNL problems with outlier noise, solving only 2 out of the 6 instances. In contrast, BADMM demonstrates high robustness, successfully solving all the problems. For CCCP, significant numerical issues arise when $R = 0.2$, as observed in the previous case. In the fourth example where JBP and GSBP are faster than BADMM, the function value obtained by BADMM is smaller. Furthermore, as shown in Figure~\ref{figSNLOL}, the predictions produced by BADMM are significantly more accurate than those of JBP and GSBP, underscoring the importance of finding a stationary point with a small function value.

\begin{figure}[htbp]
\centerline{\includegraphics[height=8cm,width=10cm]{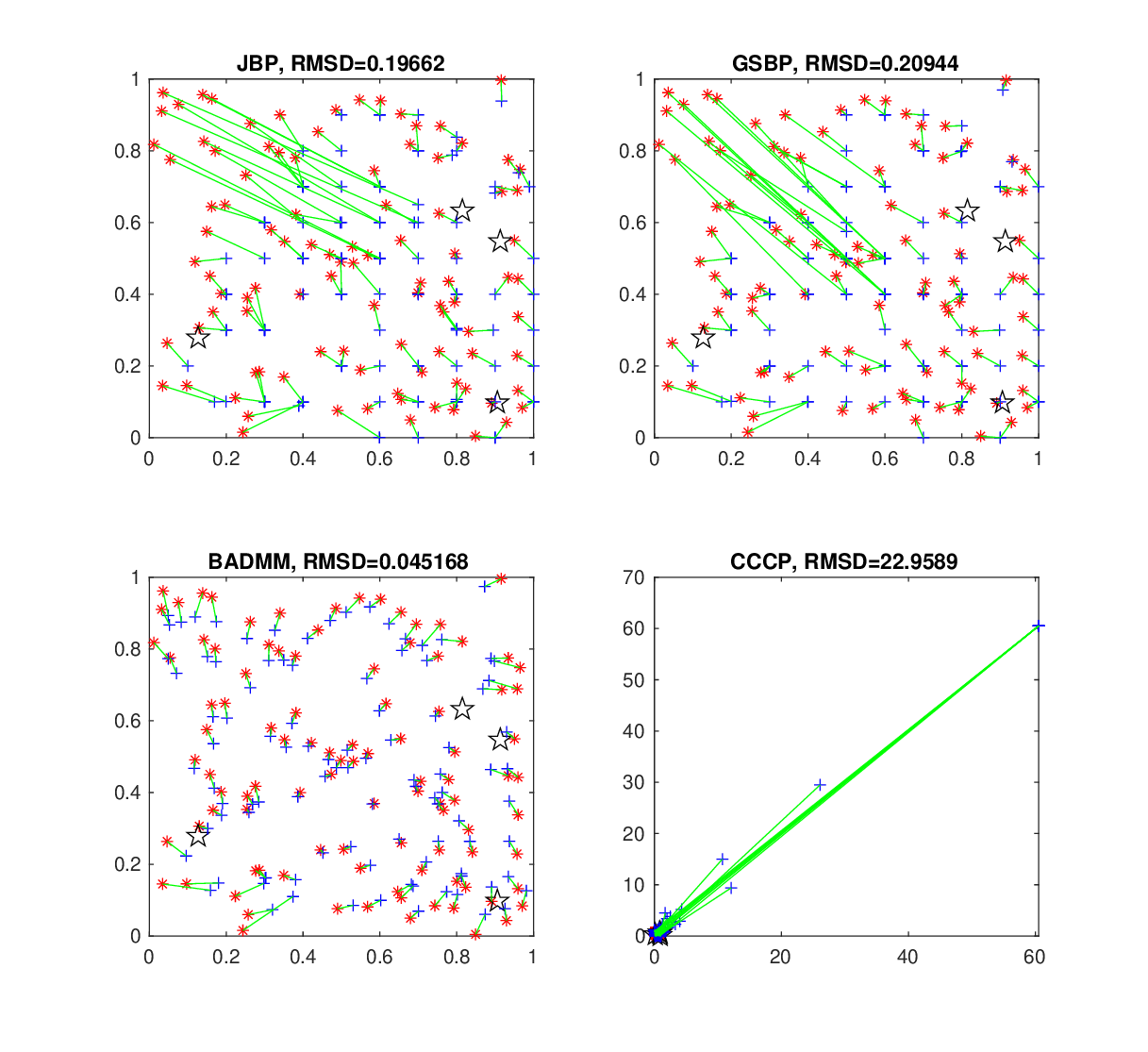}}
\caption{Results for SNL with $5\%$ outlier noise. $\sigma=0.005,R=0.2,m=100.$ \blue{+}'s are predicted sensors, \red{\ding{83}}'s are real sensors, \ding{73}'s are anchors. RMSD$=\sqrt{\sum_{k\in [n]}\|x_k-\hat{x}_k\|^2/n},$ where $\{\hat{x}_k:\ k\in [n]\}$ are predicted locations of sensors.} 
\label{figSNLOL}
\end{figure}

\subsection{Example of (\ref{QBVP}): 2D Ising models}

In the third example, we test the 2D Ising models in (\ref{QBVP}), which has been tested in \cite{zhao2024quantum}. We generate $G$ as an $n_1\times n_1$ 2D grid graph. The coefficient matrices are generated as 
\begin{align}\label{dataIsing}
&\forall k\in [n],\ \hc_k:=\frac{1}{T}\(h_x \begin{bmatrix}0&1\\1&0\end{bmatrix}-h_z\begin{bmatrix}1&0\\0&-1\end{bmatrix}\),\ \notag \\
&\forall ij\in \E,\ \hC_{ij}:=-\frac{J}{T}\cdot \begin{bmatrix}1&0\\ 0 & -1\end{bmatrix}\otimes \begin{bmatrix}1&0\\ 0 & -1\end{bmatrix},
\end{align}
where $h_x,h_z,J,T$ are parameters. We choose $n_1\in \{10,30\}$ and $[h_x,h_z,J]=[1.05,0.5,1]$ or $[2.5,0,-1]$ as suggested in \cite{zhao2024quantum}.

\begin{center}
\begin{footnotesize}
\begin{longtable}{|c|c|ccc|cl|}
\caption{Comparison of QBADMM, QJBP, QGSBP for Ising model, $h_x=1.05,h_z=0.5,J=1.$}
\label{Ising model 1}
\\
\hline
problem & algorithm & Resp & Resd & fval& iter & time \\ \hline
\endhead
 $n$=100 & {\bf QJBP} & 2.60e-07 & 0 & -2.4386471e+02 &  6 & 3.92e-01 \\
 $m$=180 & QGSBP & 7.08e-07 & 0 & -2.4386511e+02 & 24 & 1.28e+00 \\
 $T$=1 & QBADMM & 2.54e-07 & 4.15e-07& -2.4386494e+02 & 51 & 1.73e+00 \\
 \hline
 $n$=100 & QJBP* & - & - & - & -&-  \\
 $m$=180 & QGSBP & 9.88e-07 & 0 & -2.4416552e+03 & 436 & 2.11e+01 \\
 $T$=0.1 & {\bf QBADMM} & 8.53e-07 & 2.97e-07& -2.4416553e+03 & 171 & 5.31e+00 \\
 \hline
 $n$=100 & QJBP* & - & - & - & -&-  \\
 $m$=180 & QGSBP & 9.99e-07 & 0 & -2.4478861e+04 & 3194 & 1.53e+02 \\
 $T$=0.01 & {\bf QBADMM} & 9.71e-07 & 2.19e-07& -2.4478820e+04 & 591 & 1.81e+01 \\
 \hline
 $n$=900 & {\bf QJBP} & 4.50e-07 & 0 & -2.3050728e+03 &  6 & 2.31e+00 \\
 $m$=1740 & QGSBP & 3.63e-07 & 0 & -2.3050734e+03 & 25 & 1.09e+01 \\
 $T$=1 & QBADMM & 1.27e-07 & 1.96e-07& -2.3050722e+03 & 61 & 1.80e+01 \\
 \hline
 $n$=900 & QJBP* & - & - & - & -&-  \\
 $m$=1740 & QGSBP & 9.99e-07 & 0 & -2.3076988e+04 & 2415 & 1.16e+03 \\
 $T$=0.1 & {\bf QBADMM} & 9.28e-07 & 2.80e-07& -2.3076992e+04 & 661 & 1.99e+02 \\
 \hline
 $n$=900 & QJBP* & - & - & - & -&-  \\
 $m$=1740 & QGSBP* & 1.21e-04* & 0 & -2.3130566e+05 & 7355 & 3.60e+03 \\
 $T$=0.01 & {\bf QBADMM} & 9.84e-07 & 2.31e-07& -2.3130470e+05 & 2211 & 6.62e+02 \\
 \hline
\end{longtable}
\end{footnotesize}
\end{center}

\begin{center}
\begin{footnotesize}
\begin{longtable}{|c|c|ccc|cl|}
\caption{Comparison of QBADMM, QJBP, QGSBP for Ising model, $h_x=2.5,h_z=0,J=-1.$}
\label{Ising model 2}
\\
\hline
problem & algorithm & Resp & Resd & fval& iter & time \\ \hline
\endhead
 $n$=100 & QJBP* & 5.81e+01* & 0 & -1.0040858e+02 & 10000 & 4.12e+02 \\
 $m$=180 & {\bf QGSBP} & 7.06e-07 & 0 & -2.7606344e+02 & 32 & 1.54e+00 \\
 $T$=1 & QBADMM & 1.01e-08 & 3.36e-08& -2.7606705e+02 & 51 & 1.70e+00 \\
 \hline
 $n$=100 & {\bf QJBP} & 3.50e-07 & 0 & -2.5000487e+03* & 13 & 4.80e-01 \\
 $m$=180 & QGSBP & 9.93e-07 & 0 & -3.0046956e+03 & 902 & 4.25e+01 \\
 $T$=0.1 & QBADMM & 7.54e-07 & 2.58e-07& -3.0046879e+03 & 341 & 1.10e+01 \\
 \hline
 $n$=100 & {\bf QJBP} & 1.72e-07 & 0 & -2.5000342e+04* & 13 & 4.68e-01 \\
 $m$=180 & QGSBP* & 1.68e-05* & 0 & -3.0726971e+04 & 10000 & 4.63e+02 \\
 $T$=0.01 & QBADMM & 9.78e-07 & 2.05e-07& -3.0725593e+04 & 2741 & 8.48e+01 \\
 \hline
 $n$=900 & JQBP* & 5.14e+02* & 0 & -5.2905916e+02 & 9295 & 3.60e+03 \\
 $m$=1740 & QGSBP & 6.18e-07 & 0 & -2.5067091e+03 & 35 & 1.57e+01 \\
 $T$=1 & {\bf QBADMM} & 1.33e-07 & 4.38e-07& -2.5067207e+03 & 51 & 1.49e+01 \\
 \hline
 $n$=900 & {\bf QJBP} & 8.25e-07 & 0 & -2.2500146e+04* & 13 & 4.19e+00 \\
 $m$=1740 & QGSBP & 9.99e-07 & 0 & -2.7634677e+04 & 3083 & 1.36e+03 \\
 $T$=0.1 & QBADMM & 9.70e-07 & 2.14e-07& -2.7634590e+04 & 1111 & 3.25e+02 \\
 \hline
 $n$=900 & {\bf QJBP} & 4.02e-07 & 0 & -2.2500103e+05* & 13 & 4.20e+00 \\
 $m$=1740 & QGSBP* & 1.47e-03* & 0 & -2.8361552e+05 & 7824 & 3.60e+03 \\
 $T$=0.01 & QBADMM* & 2.55e-06* & 5.25e-07& -2.8357454e+05 & 10000 & 2.92e+03 \\
 \hline
 \end{longtable}
\end{footnotesize}
\end{center}

From Table~\ref{Ising model 1}, we observe that QBADMM is the only algorithm capable of solving all the instances. QJBP encounters numerical issues, and its output becomes \texttt{NaN} in {\sc Matlab}, so we do not include its results in the table. This numerical instability has also been reported in the experiments conducted by Zhao et al. in \cite{zhao2024quantum}. Interestingly, QGSBP does not exhibit such numerical issues and is often able to find solutions with acceptable accuracy. 

It is worth noting that when solving (\ref{QBVP}), the algorithms perform $\logm(\cdot)$ and $\expm(\cdot)$ operations, which cannot be directly executed on 3D tensors in {\sc Matlab}. As a result, we use a for loop to conduct these operations sequentially across all the algorithms. This approach means that we have not yet exploited the parallelizability of QBADMM. We believe its efficiency could be significantly improved by implementing it on GPUs using parallel computing.

From Table~\ref{Ising model 2}, we observe that QJBP solves 4 of the problems with the shortest runtime, without encountering the previous numerical issues. However, its function value is significantly higher than those of GSBP and QBADMM, suggesting that QJBP may still face numerical challenges that cause it to converge to suboptimal local minimizers. Comparing QGSBP and QBADMM, we note that QBADMM generally outperforms QGSBP. In the final example, where both methods terminate before achieving the required solution accuracy, the output of QBADMM is more accurate than that of QGSBP. Furthermore, QBADMM does not reach the maximum running time in this case; if we continue running QBADMM, it is expected to find a solution with a KKT residual smaller than $10^{-6}$.

\section{Conclusion}\label{Sec:conc}

In this work, we proposed a Bregman ADMM to solve the Bethe variational problem and its quantum counterpart. Despite the non-smooth and non-convex nature of this optimization problem, we established the convergence properties of
our Bregman ADMM by leveraging the problem's special structure. A key step in our analysis involves proving that the entries of local minima are bounded below by a positive constant, which prevents the iterates of the Bregman ADMM from approaching the boundary and enables the exploitation of the objective function's smoothness. Extensive numerical experiments demonstrate that our Bregman ADMM is highly robust compared to the well-known belief propagation method, which does not have convergence guarantee for a general graph. 
For future work, an intriguing direction would be to extend the theoretical results to the quantum Bethe variational problem. Another promising avenue is to apply our algorithm to tackle more challenging problems in scientific computing.

\section*{Acknowledgments.}
The authors would like to thank Dr. Nachuan Xiao for many helpful discussions on the convergence analysis of Bregman ADMM.

\bibliographystyle{abbrv}
\bibliography{BVP}

@article{bethe1935statistical,
  title={Statistical theory of superlattices},
  author={Bethe, Hans A},
  journal={Proceedings of the Royal Society of London. Series A-Mathematical and Physical Sciences},
  volume={150},
  number={871},
  pages={552--575},
  year={1935},
  publisher={The Royal Society London}
}

@article{horst1999dc,
  title={{DC} programming: overview},
  author={Horst, Reiner and Thoai, Nguyen V},
  journal={Journal of Optimization Theory and Applications},
  volume={103},
  pages={1--43},
  year={1999},
  publisher={Springer}
}

@article{murphy2013loopy,
  title={Loopy belief propagation for approximate inference: An empirical study},
  author={Murphy, Kevin and Weiss, Yair and Jordan, Michael I},
  journal={arXiv preprint arXiv:1301.6725},
  year={2013}
}

@book{pearl2014probabilistic,
  title={Probabilistic reasoning in intelligent systems: networks of plausible inference},
  author={Pearl, Judea},
  year={2014},
  publisher={Elsevier}
}

@inproceedings{yuille2001double,
  title={A double-loop algorithm to minimize the {B}ethe free energy},
  author={Yuille, Alan},
  booktitle={Energy Minimization Methods in Computer Vision and Pattern Recognition: Third International Workshop, EMMCVPR 2001 Sophia Antipolis, France, September 3--5, 2001 Proceedings 3},
  pages={3--18},
  year={2001},
  organization={Springer}
}

@article{yuille2002cccp,
  title={{CCCP} algorithms to minimize the {B}ethe and {K}ikuchi free energies: Convergent alternatives to belief propagation},
  author={Yuille, Alan L},
  journal={Neural Computation},
  volume={14},
  number={7},
  pages={1691--1722},
  year={2002},
  publisher={MIT Press}
}

@article{wang2014bregman,
  title={Bregman alternating direction method of multipliers},
  author={Wang, Huahua and Banerjee, Arindam},
  journal={Advances in Neural Information Processing Systems},
  volume={27},
  year={2014}
}

@article{zhao2024quantum,
  title={Quantum Belief Propagation},
  author={Zhao, Jian and Bondesan, Roberto and Luk, Wayne},
  year={2024}
}

@article{leifer2008quantum,
  title={Quantum graphical models and belief propagation},
  author={Leifer, Matthew S and Poulin, David},
  journal={Annals of Physics},
  volume={323},
  number={8},
  pages={1899--1946},
  year={2008},
  publisher={Elsevier}
}

@article{wainwright2008graphical,
  title={Graphical models, exponential families, and variational inference},
  author={Wainwright, Martin J and Jordan, Michael I and others},
  journal={Foundations and Trends{\textregistered} in Machine Learning},
  volume={1},
  number={1--2},
  pages={1--305},
  year={2008},
  publisher={Now Publishers, Inc.}
}

@book{mackay2003information,
  title={Information theory, inference and learning algorithms},
  author={MacKay, David JC},
  year={2003},
  publisher={Cambridge university press}
}

@article{makur2024robustness,
  title={On the robustness of mechanism design under total variation distance},
  author={Makur, Anuran and Mertzanidis, Marios and Psomas, Alexandros and Terzoglou, Athina},
  journal={Advances in Neural Information Processing Systems},
  volume={36},
  year={2024}
}

@inproceedings{mridula2011combining,
  title={Combining glcm features and markov random field model for colour textured image segmentation},
  author={Mridula, J and Kumar, Kundan and Patra, Dipti},
  booktitle={2011 International Conference on Devices and Communications (ICDeCom)},
  pages={1--5},
  year={2011},
  organization={IEEE}
}

@article{yedidia2001bethe,
  title={Bethe free energy, {K}ikuchi approximations, and belief propagation algorithms},
  author={Yedidia, Jonathan S and Freeman, William T and Weiss, Yair},
  journal={Advances in Neural Information Processing Systems},
  volume={13},
  number={24},
  year={2001}
}

@article{yedidia2003understanding,
  title={Understanding belief propagation and its generalizations},
  author={Yedidia, Jonathan S and Freeman, William T and Weiss, Yair},
  journal={Exploring Artificial Intelligence in the New Millennium},
  volume={8},
  number={236--239},
  pages={0018--9448},
  year={2003}
}

@article{yedidia2005constructing,
  title={Constructing free-energy approximations and generalized belief propagation algorithms},
  author={Yedidia, Jonathan S and Freeman, William T and Weiss, Yair},
  journal={IEEE Transactions on Information Theory},
  volume={51},
  number={7},
  pages={2282--2312},
  year={2005},
  publisher={IEEE}
}

@article{yedidia2000generalized,
  title={Generalized belief propagation},
  author={Yedidia, Jonathan S and Freeman, William and Weiss, Yair},
  journal={Advances in Neural Information Processing Systems},
  volume={13},
  year={2000}
}

@article{kullback1951information,
  title={On information and sufficiency},
  author={Kullback, Solomon and Leibler, Richard A},
  journal={The Annals of Mathematical Statistics},
  volume={22},
  number={1},
  pages={79--86},
  year={1951},
  publisher={JSTOR}
}

@article{welling2013belief,
  title={Belief optimization for binary networks: A stable alternative to loopy belief propagation},
  author={Welling, Max and Teh, Yee Whye},
  journal={arXiv preprint arXiv:1301.2317},
  year={2013}
}

@book{lauritzen1996graphical,
  title={Graphical models},
  author={Lauritzen, SL},
  year={1996},
  publisher={Clarendon Press}
}

@article{dagum1993approximating,
  title={Approximating probabilistic inference in {B}ayesian belief networks is {NP}-hard},
  author={Dagum, Paul and Luby, Michael},
  journal={Artificial intelligence},
  volume={60},
  number={1},
  pages={141--153},
  year={1993},
  publisher={Elsevier}
}

@article{leisenberger2024convexity,
  title={On the Convexity and Reliability of the {B}ethe Free Energy Approximation},
  author={Leisenberger, Harald and Knoll, Christian and Pernkopf, Franz},
  journal={arXiv preprint arXiv:2405.15514},
  year={2024}
}

@inproceedings{shin2012complexity,
  title={Complexity of {B}ethe approximation},
  author={Shin, Jinwoo},
  booktitle={Artificial Intelligence and Statistics},
  pages={1037--1045},
  year={2012},
  organization={PMLR}
}

@article{shin2014complexity,
  title={The complexity of approximating a {B}ethe equilibrium},
  author={Shin, Jinwoo},
  journal={IEEE Transactions on Information Theory},
  volume={60},
  number={7},
  pages={3959--3969},
  year={2014},
  publisher={IEEE}
}

@inproceedings{weller2013bethe,
  title={Bethe bounds and approximating the global optimum},
  author={Weller, Adrian and Jebara, Tony},
  booktitle={Artificial Intelligence and Statistics},
  pages={618--631},
  year={2013},
  organization={PMLR}
}

@article{boyd2011distributed,
  title={Distributed optimization and statistical learning via the alternating direction method of multipliers},
  author={Boyd, Stephen and Parikh, Neal and Chu, Eric and Peleato, Borja and Eckstein, Jonathan and others},
  journal={Foundations and Trends{\textregistered} in Machine learning},
  volume={3},
  number={1},
  pages={1--122},
  year={2011},
  publisher={Now Publishers, Inc.}
}

@article{gabay1976dual,
  title={A dual algorithm for the solution of nonlinear variational problems via finite element approximation},
  author={Gabay, Daniel and Mercier, Bertrand},
  journal={Computers \& Mathematics with Applications},
  volume={2},
  number={1},
  pages={17--40},
  year={1976},
  publisher={Elsevier}
}

@article{glowinski1975approximation,
  title={Sur l'approximation, par {\'e}l{\'e}ments finis d'ordre un, et la r{\'e}solution, par p{\'e}nalisation-dualit{\'e} d'une classe de probl{\`e}mes de Dirichlet non lin{\'e}aires},
  author={Glowinski, Roland and Marroco, Americo},
  journal={Revue Fran{\c{c}}aise d'Automatique, Informatique, Recherche Op{\'e}rationnelle. Analyse Num{\'e}rique},
  volume={9},
  number={R2},
  pages={41--76},
  year={1975},
  publisher={EDP Sciences}
}

@article{li2016schur,
  title={A {Schur} complement based semi-proximal ADMM for convex quadratic conic programming and extensions},
  author={Li, Xudong and Sun, Defeng and Toh, Kim-Chuan},
  journal={Mathematical Programming},
  volume={155},
  number={1},
  pages={333--373},
  year={2016},
  publisher={Springer}
}

@article{tang2024self,
  title={Self-adaptive {ADMM} for semi-strongly convex problems},
  author={Tang, Tianyun and Toh, Kim-Chuan},
  journal={Mathematical Programming Computation},
  volume={16},
  number={1},
  pages={113--150},
  year={2024},
  publisher={Springer}
}

@article{yang2017alternating,
  title={Alternating direction method of multipliers for a class of nonconvex and nonsmooth problems with applications to background/foreground extraction},
  author={Yang, Lei and Pong, Ting Kei and Chen, Xiaojun},
  journal={SIAM Journal on Imaging Sciences},
  volume={10},
  number={1},
  pages={74--110},
  year={2017},
  publisher={SIAM}
}

@article{sun2024accelerating,
  title={Accelerating preconditioned {ADMM} via degenerate proximal point mappings},
  author={Sun, Defeng and Yuan, Yancheng and Zhang, Guojun and Zhao, Xinyuan},
  journal={arXiv preprint arXiv:2403.18618},
  year={2024}
}

@article{cuturi2013sinkhorn,
  title={Sinkhorn distances: Lightspeed computation of optimal transport},
  author={Cuturi, Marco},
  journal={Advances in Neural Information Processing Systems},
  volume={26},
  year={2013}
}

@article{wang2018convergence,
  title={Convergence of multi-block {Bregman} ADMM for nonconvex composite problems},
  author={Wang, Fenghui and Cao, Wenfei and Xu, Zongben},
  journal={Science China Information Sciences},
  volume={61},
  pages={1--12},
  year={2018},
  publisher={Springer}
}

@article{bregman1967relaxation,
  title={The relaxation method of finding the common point of convex sets and its application to the solution of problems in convex programming},
  author={Bregman, Lev M},
  journal={USSR Computational Mathematics and Mathematical Physics},
  volume={7},
  number={3},
  pages={200--217},
  year={1967},
  publisher={Elsevier}
}

@article{pham2024bregman,
  title={Bregman proximal linearized {ADMM} for minimizing separable sums coupled by a difference of functions},
  author={Pham, Tan Nhat and Dao, Minh N and Eberhard, Andrew and Sultanova, Nargiz},
  journal={Journal of Optimization Theory and Applications},
  pages={1--37},
  year={2024},
  publisher={Springer}
}

@article{liu2023bregman,
  title={A {B}regman-style partially symmetric alternating direction method of multipliers for nonconvex multi-block optimization},
  author={Liu, Peng-jie and Jian, Jin-bao and Ma, Guo-dong},
  journal={Acta Mathematicae Applicatae Sinica, English Series},
  volume={39},
  number={2},
  pages={354--380},
  year={2023},
  publisher={Springer}
}

@phdthesis{le2019nonconvex,
  title={Nonconvex Alternating Direction Optimization for Graphs: Inference and Learning},
  author={L{\^e}-Huu, Khu{\^e}},
  year={2019},
  school={CentraleSup{\'e}lec, Universit{\'e} Paris-Saclay}
}

@article{wang2019global,
  title={Global convergence of {ADMM} in nonconvex nonsmooth optimization},
  author={Wang, Yu and Yin, Wotao and Zeng, Jinshan},
  journal={Journal of Scientific Computing},
  volume={78},
  pages={29--63},
  year={2019},
  publisher={Springer}
}

@article{hong2016convergence,
  title={Convergence analysis of alternating direction method of multipliers for a family of nonconvex problems},
  author={Hong, Mingyi and Luo, Zhi-Quan and Razaviyayn, Meisam},
  journal={SIAM Journal on Optimization},
  volume={26},
  number={1},
  pages={337--364},
  year={2016},
  publisher={SIAM}
}

@article{liu2019linearized,
  title={Linearized {ADMM} for nonconvex nonsmooth optimization with convergence analysis},
  author={Liu, Qinghua and Shen, Xinyue and Gu, Yuantao},
  journal={IEEE Access},
  volume={7},
  pages={76131--76144},
  year={2019},
  publisher={IEEE}
}

@book{rockafellar2009variational,
  title={Variational analysis},
  author={Rockafellar, R Tyrrell and Wets, Roger J-B},
  volume={317},
  year={2009},
  publisher={Springer Science \& Business Media}
}

@article{peyre2019computational,
  title={Computational optimal transport: With applications to data science},
  author={Peyr{\'e}, Gabriel and Cuturi, Marco and others},
  journal={Foundations and Trends{\textregistered} in Machine Learning},
  volume={11},
  number={5-6},
  pages={355--607},
  year={2019},
  publisher={Now Publishers, Inc.}
}

@techreport{quadNPhard,
  title={Some {NP-complete} problems in quadratic and nonlinear programming},
  author={Murty, Katta G and Kabadi, Santosh N},
  year={1985}
}

@article{eckstein2022quantitative,
  title={Quantitative Stability of Regularized Optimal Transport and Convergence of {S}inkhorn's Algorithm},
  author={Eckstein, Stephan and Nutz, Marcel},
  journal={SIAM Journal on Mathematical Analysis},
  volume={54},
  number={6},
  pages={5922--5948},
  year={2022},
  publisher={SIAM}
}

@book{qi2010semismooth,
  title={Semismooth and smoothing Newton methods},
  author={Qi, Liqun and Sun, Defeng and Ulbrich, Michael},
  year={2010},
  publisher={Springer}
}

@article{lieb2002some,
  title={Some convexity and subadditivity properties of entropy},
  author={Lieb, Elliott H},
  journal={Inequalities: Selecta of Elliott H. Lieb},
  pages={67--79},
  year={2002},
  publisher={Springer}
}

@article{lin2011linearized,
  title={Linearized alternating direction method with adaptive penalty for low-rank representation},
  author={Lin, Zhouchen and Liu, Risheng and Su, Zhixun},
  journal={Advances in Neural Information Processing Systems},
  volume={24},
  year={2011}
}

@inproceedings{ihler2004nonparametric,
  title={Nonparametric belief propagation for self-calibration in sensor networks},
  author={Ihler, Alexander T and Fisher III, John W and Moses, Randolph L and Willsky, Alan S},
  booktitle={Proceedings of the 3rd International Symposium on Information Processing in Sensor Networks},
  pages={225--233},
  year={2004}
}

@article{mao2007wireless,
  title={Wireless sensor network localization techniques},
  author={Mao, Guoqiang and Fidan, Bar{\i}{\c{s}} and Anderson, Brian DO},
  journal={Computer Networks},
  volume={51},
  number={10},
  pages={2529--2553},
  year={2007},
  publisher={Elsevier}
}

@inproceedings{biswas2004semidefinite,
  title={Semidefinite programming for ad hoc wireless sensor network localization},
  author={Biswas, Pratik and Ye, Yinyu},
  booktitle={Proceedings of the 3rd International Symposium on Information Processing in Sensor Networks},
  pages={46--54},
  year={2004}
}

@article{tang2024riemannian,
  title={A {R}iemannian Dimension-Reduced Second-Order Method with Application in Sensor Network Localization},
  author={Tang, Tianyun and Toh, Kim-Chuan and Xiao, Nachuan and Ye, Yinyu},
  journal={SIAM Journal on Scientific Computing},
  volume={46},
  number={3},
  pages={A2025--A2046},
  year={2024},
  publisher={SIAM}
}

@article{liang2004gradient,
  title={A gradient search method to round the semidefinite programming relaxation solution for ad hoc wireless sensor network localization},
  author={Liang, Tzu-Chen and Wang, Ta-Chung and Ye, Yinyu},
  journal={Sanford University, formal report},
  volume={5},
  year={2004}
}

@inproceedings{chaudhury2015large,
  title={Large-scale sensor network localization via rigid subnetwork registration},
  author={Chaudhury, Kunal N and Khoo, Yuehaw and Singer, Amit},
  booktitle={2015 IEEE International Conference on Acoustics, Speech and Signal Processing (ICASSP)},
  pages={2849--2853},
  year={2015},
  organization={IEEE}
}

@article{mezard2007constraint,
  title={Constraint satisfaction networks in Physics and Computation},
  author={M{\'e}zard, Marc and Montanari, Andrea},
  journal={Clarendon Press, Oxford},
  volume={1},
  number={9},
  pages={11},
  year={2007}
}

@article{poulin2008belief,
  title={Belief propagation algorithm for computing correlation functions in finite-temperature quantum many-body systems on loopy graphs},
  author={Poulin, David and Bilgin, Ersen},
  journal={Physical Review A—Atomic, Molecular, and Optical Physics},
  volume={77},
  number={5},
  pages={052318},
  year={2008},
  publisher={APS}
}

@article{poulin2011markov,
  title={Markov entropy decomposition: a variational dual for quantum belief propagation},
  author={Poulin, David and Hastings, Matthew B},
  journal={Physical Review Letters},
  volume={106},
  number={8},
  pages={080403},
  year={2011},
  publisher={APS}
}

@article{hastings2007quantum,
  title={Quantum belief propagation: An algorithm for thermal quantum systems},
  author={Hastings, Matthew B},
  journal={Physical Review B—Condensed Matter and Materials Physics},
  volume={76},
  number={20},
  pages={201102},
  year={2007},
  publisher={APS}
}

@article{lieb1973proof,
  title={Proof of the strong subadditivity of quantum-mechanical entropy},
  author={Lieb, Elliott H and Ruskai, Mary Beth},
  journal={Les Rencontres Physiciens-math{\'e}maticiens de Strasbourg-RCP25},
  volume={19},
  pages={36--55},
  year={1973}
}

@article{beck2003mirror,
  title={Mirror descent and nonlinear projected subgradient methods for convex optimization},
  author={Beck, Amir and Teboulle, Marc},
  journal={Operations Research Letters},
  volume={31},
  number={3},
  pages={167--175},
  year={2003},
  publisher={Elsevier}
}

@article{hanzely2021accelerated,
  title={Accelerated {B}regman proximal gradient methods for relatively smooth convex optimization},
  author={Hanzely, Filip and Richtarik, Peter and Xiao, Lin},
  journal={Computational Optimization and Applications},
  volume={79},
  pages={405--440},
  year={2021},
  publisher={Springer}
}

@article{ding2023nonconvex,
  title={Nonconvex stochastic {Bregman} proximal gradient method with application to deep learning},
  author={Ding, Kuangyu and Li, Jingyang and Toh, Kim-Chuan},
  journal={arXiv preprint arXiv:2306.14522},
  year={2023}
}

@article{yang2022bregman,
  title={Bregman proximal point algorithm revisited: A new inexact version and its inertial variant},
  author={Yang, Lei and Toh, Kim-Chuan},
  journal={SIAM Journal on Optimization},
  volume={32},
  number={3},
  pages={1523--1554},
  year={2022},
  publisher={SIAM}
}

@article{attouch2010proximal,
  title={Proximal alternating minimization and projection methods for nonconvex problems: An approach based on the Kurdyka-{\L}ojasiewicz inequality},
  author={Attouch, H{\'e}dy and Bolte, J{\'e}r{\^o}me and Redont, Patrick and Soubeyran, Antoine},
  journal={Mathematics of operations research},
  volume={35},
  number={2},
  pages={438--457},
  year={2010},
  publisher={INFORMS}
}

@article{attouch2009convergence,
  title={On the convergence of the proximal algorithm for nonsmooth functions involving analytic features},
  author={Attouch, Hedy and Bolte, J{\'e}r{\^o}me},
  journal={Mathematical Programming},
  volume={116},
  pages={5--16},
  year={2009},
  publisher={Springer}
}

@article{attouch2013convergence,
  title={Convergence of descent methods for semi-algebraic and tame problems: proximal algorithms, forward--backward splitting, and regularized Gauss--Seidel methods},
  author={Attouch, Hedy and Bolte, J{\'e}r{\^o}me and Svaiter, Benar Fux},
  journal={Mathematical Programming},
  volume={137},
  number={1},
  pages={91--129},
  year={2013},
  publisher={Springer}
}

@article{wilkie1996model,
  title={Model completeness results for expansions of the ordered field of real numbers by restricted Pfaffian functions and the exponential function},
  author={Wilkie, Alex J},
  journal={Journal of the American Mathematical Society},
  volume={9},
  number={4},
  pages={1051--1094},
  year={1996},
  publisher={JSTOR}
}

@book{van1998tame,
  title={Tame topology and o-minimal structures},
  author={Van den Dries, Lou},
  volume={248},
  year={1998},
  publisher={Cambridge university press}
}

@article{bolte2014proximal,
  title={Proximal alternating linearized minimization for nonconvex and nonsmooth problems},
  author={Bolte, J{\'e}r{\^o}me and Sabach, Shoham and Teboulle, Marc},
  journal={Mathematical Programming},
  volume={146},
  number={1},
  pages={459--494},
  year={2014},
  publisher={Springer}
}

@article{lin2022variational,
  title={Variational Embedding for Quantum Many-Body Problems},
  author={Lin, Lin and Lindsey, Michael},
  journal={Communications on Pure and Applied Mathematics},
  volume={75},
  number={9},
  pages={2033--2068},
  year={2022},
  publisher={Wiley Online Library}
}

@article{khoo2024scalable,
  title={Scalable semidefinite programming approach to variational embedding for quantum many-body problems},
  author={Khoo, Yuehaw and Lindsey, Michael},
  journal={Journal of Computational Physics},
  volume={510},
  pages={113041},
  year={2024},
  publisher={Elsevier}
}

@article{pelizzola2005cluster,
  title={Cluster variation method in statistical physics and probabilistic graphical models},
  author={Pelizzola, Alessandro},
  journal={Journal of Physics A: Mathematical and General},
  volume={38},
  number={33},
  pages={R309},
  year={2005},
  publisher={IOP Publishing}
}

@article{perry2018message,
  title={Message-Passing Algorithms for Synchronization Problems over Compact Groups},
  author={Perry, Amelia and Wein, Alexander S and Bandeira, Afonso S and Moitra, Ankur},
  journal={Communications on Pure and Applied Mathematics},
  volume={71},
  number={11},
  pages={2275--2322},
  year={2018},
  publisher={Wiley Online Library}
}

@article{bach2004learning,
  title={Learning graphical models for stationary time series},
  author={Bach, Francis R and Jordan, Michael I},
  journal={IEEE transactions on signal processing},
  volume={52},
  number={8},
  pages={2189--2199},
  year={2004},
  publisher={IEEE}
}

@article{abbe2018proof,
  title={Proof of the achievability conjectures for the general stochastic block model},
  author={Abbe, Emmanuel and Sandon, Colin},
  journal={Communications on Pure and Applied Mathematics},
  volume={71},
  number={7},
  pages={1334--1406},
  year={2018},
  publisher={Wiley Online Library}
}

@article{mazziotti1998contracted,
  title={Contracted Schr{\"o}dinger equation: Determining quantum energies and two-particle density matrices without wave functions},
  author={Mazziotti, David A},
  journal={Physical Review A},
  volume={57},
  number={6},
  pages={4219},
  year={1998},
  publisher={APS}
}

@article{mazziotti2004realization,
  title={Realization of Quantum Chemistry without Wave Functions<? format?> through First-Order Semidefinite Programming},
  author={Mazziotti, David A},
  journal={Physical review letters},
  volume={93},
  number={21},
  pages={213001},
  year={2004},
  publisher={APS}
}

@article{cances2006electronic,
  title={The electronic ground-state energy problem: A new reduced density matrix approach},
  author={Cances, Eric and Stoltz, Gabriel and Lewin, Mathieu},
  journal={The Journal of chemical physics},
  volume={125},
  number={6},
  year={2006},
  publisher={AIP Publishing}
}

@article{mazziotti2012structure,
  title={Structure of fermionic density matrices: Complete N-representability conditions},
  author={Mazziotti, David A},
  journal={Physical Review Letters},
  volume={108},
  number={26},
  pages={263002},
  year={2012},
  publisher={APS}
}

@article{zhao2004reduced,
  title={The reduced density matrix method for electronic structure calculations and the role of three-index representability conditions},
  author={Zhao, Zhengji and Braams, Bastiaan J and Fukuda, Mituhiro and Overton, Michael L and Percus, Jerome K},
  journal={The Journal of chemical physics},
  volume={120},
  number={5},
  pages={2095--2104},
  year={2004},
  publisher={AIP Publishing}
}

@article{li2018semismooth,
  title={A semismooth Newton method for semidefinite programs and its applications in electronic structure calculations},
  author={Li, Yongfeng and Wen, Zaiwen and Yang, Chao and Yuan, Ya-xiang},
  journal={SIAM Journal on Scientific Computing},
  volume={40},
  number={6},
  pages={A4131--A4157},
  year={2018},
  publisher={SIAM}
}

@article{anderson2013second,
  title={The second-order reduced density matrix method and the two-dimensional Hubbard model},
  author={Anderson, James SM and Nakata, Maho and Igarashi, Ryo and Fujisawa, Katsuki and Yamashita, Makoto},
  journal={Computational and Theoretical Chemistry},
  volume={1003},
  pages={22--27},
  year={2013},
  publisher={Elsevier}
}

@article{nakata2001variational,
  title={Variational calculations of fermion second-order reduced density matrices by semidefinite programming algorithm},
  author={Nakata, Maho and Nakatsuji, Hiroshi and Ehara, Masahiro and Fukuda, Mitsuhiro and Nakata, Kazuhide and Fujisawa, Katsuki},
  journal={The Journal of Chemical Physics},
  volume={114},
  number={19},
  pages={8282--8292},
  year={2001},
  publisher={American Institute of Physics}
}

@article{deprince2010exploiting,
  title={Exploiting the spatial locality of electron correlation within the parametric two-electron reduced-density-matrix method},
  author={DePrince, A Eugene and Mazziotti, David A},
  journal={The Journal of chemical physics},
  volume={132},
  number={3},
  year={2010},
  publisher={AIP Publishing}
}

@article{sun2016quantum,
  title={Quantum embedding theories},
  author={Sun, Qiming and Chan, Garnet Kin-Lic},
  journal={Accounts of chemical research},
  volume={49},
  number={12},
  pages={2705--2712},
  year={2016},
  publisher={ACS Publications}
}

@article{lasserre2001global,
  title={Global optimization with polynomials and the problem of moments},
  author={Lasserre, Jean B},
  journal={SIAM Journal on optimization},
  volume={11},
  number={3},
  pages={796--817},
  year={2001},
  publisher={SIAM}
}

@article{georges1996dynamical,
  title={Dynamical mean-field theory of strongly correlated fermion systems and the limit of infinite dimensions},
  author={Georges, Antoine and Kotliar, Gabriel and Krauth, Werner and Rozenberg, Marcelo J},
  journal={Reviews of Modern Physics},
  volume={68},
  number={1},
  pages={13},
  year={1996},
  publisher={APS}
}

@article{kotliar2006electronic,
  title={Electronic structure calculations with dynamical mean-field theory},
  author={Kotliar, Gabriel and Savrasov, Sergej Y and Haule, Kristjan and Oudovenko, Viktor S and Parcollet, O and Marianetti, CA},
  journal={Reviews of Modern Physics},
  volume={78},
  number={3},
  pages={865--951},
  year={2006},
  publisher={APS}
}

@article{knizia2012density,
  title={Density matrix embedding: A simple alternative to dynamical mean-field theory},
  author={Knizia, Gerald and Chan, Garnet Kin-Lic},
  journal={Physical review letters},
  volume={109},
  number={18},
  pages={186404},
  year={2012},
  publisher={APS}
}

@article{knizia2013density,
  title={Density matrix embedding: A strong-coupling quantum embedding theory},
  author={Knizia, Gerald and Chan, Garnet Kin-Lic},
  journal={Journal of chemical theory and computation},
  volume={9},
  number={3},
  pages={1428--1432},
  year={2013},
  publisher={ACS Publications}
}

\end{document}